\renewcommand{\eqref}[1]{\hyperref[#1]{(\ref*{#1})}}
\numberwithin{equation}{section} 
\numberwithin{figure}{section} 
\setlist[enumerate]{label=$\arabic*.$, ref=$\arabic*$}
\theoremstyle{plain}
\newtheorem{theoalph}{Theorem}
\newtheorem{prop}{Proposition}[section]
\newtheorem{coro}[prop]{Corollary}
\newtheorem{lemm}[prop]{Lemma}
\newtheorem{conj}[prop]{Conjecture}
\newenvironment{custtheo}[1]
{\innercustomthm}
{\endinnercustomthm}
\theoremstyle{definition}
\theoremstyle{remark}
\newtheoremstyle{citing}
{3pt}
{3pt}
{\itshape}
{}
{\bfseries}
{.}
{.5em}
{\thmnote{#3}}
\theoremstyle{citing}
\newcommand{\R}{\mathbb{R}}
\newcommand{\Z}{\mathbb{Z}}
\newcommand{\cO}{\mathcal{O}}
\newcommand{\hX}{\widehat{X}}
\newcommand{\hlambda}{\widehat{\lambda}}
\newcommand{\hvarrho}{\widehat{\varrho}}
\newcommand{\hvarphi}{\widehat{\varphi}}
\newcommand{\tK}{\widetilde{K}}
\newcommand{\talpha}{\widetilde{\alpha}}
\renewcommand{\:}{\colon}
\renewcommand{\=}{\colonequals}
\newcommand{\dd}{\hspace{1pt}\operatorname{d}\hspace{-1pt}}
\DeclareMathOperator{\diam}{diam}
\renewcommand{\emph}[1]{\textsf{\textit{#1}}}
\newcommand{\aA}{\mathsf{A}}
\newcommand{\aX}{\mathsf{X}}
\newcommand{\whr}{\widehat{r}}
\newcommand{\whx}{\widehat{x}}
\newcommand{\whz}{\widehat{z}}
\newcommand{\wtm}{\widetilde{m}}
\newcommand{\hell}{\widehat{\ell}}
\let\il\int
\renewcommand{\int}{\text{ *** \emph{CHANGE command} *** }}
\newcommand{\OK}{{\mathcal{O}_K}}
\newcommand{\MK}{{\mathfrak{m}_K}}
\newcommand{\HK}{{\mathbb{H}_K}}
\newcommand{\PK}{{\mathbb{P}^{1}_K}}
\newcommand{\PKber}{{\mathsf{P}^{1}_K}}
\newcommand{\xcan}{{x_{\operatorname{can}}}}
\DeclareMathOperator{\crit}{crit}
\DeclareMathOperator{\ord}{ord}
\newcommand{\wdeg}[1]{\deg_{\operatorname{i}, #1}} 
\newcommand{\wmax}{\deg_{\operatorname{i}, \max}} 
\newcommand{\wf}{\operatorname{i}} 
\begin{document}

\title{Locating critical points attracted to $p$-adic attracting cycles}
\author{Juan Rivera-Letelier}
\address{Department of Mathematics, University of Rochester. Hylan Building, Rochester, NY~14627, U.S.A.}
\email{riveraletelier@gmail.com}
\urladdr{\url{http://rivera-letelier.org/}}
\date{\today}

\begin{abstract}
  In complex dynamics, a fundamental result of \textsc{Fatou} and \textsc{Julia} asserts that every attracting cycle of a rational map attracts a critical point.
  The analogous statement fails in non-\textsc{Archimedean} dynamics.
  For a non-\textsc{Archimedean} rational map, this paper establishes a sharp condition on the multiplier of an attracting cycle ensuring it attracts a critical point.
\end{abstract}

\maketitle


\section{Introduction}
\label{s:introdcution}

Throughout this paper, ${K}$ is an algebraically closed field that is complete with respect to a nontrivial ultrametric norm~$|\cdot|$, and~$p$ is the residue characteristic of~$K$.
A cycle~$\cO$ of a rational map~$R$ with coefficients in~$K$ \emph{attracts a point~$x_0$ of~$\PK$}, if the forward orbit ${(R^{n \# \cO}(x_0))_{n = 1}^{+\infty}}$ of~$x_0$ under~$R^{\#\cO}$ converges to a point in~$\cO$.

In the complex setting, a fundamental result of \textsc{Fatou} \cite[\S30]{Fat20a} and \textsc{Julia} \cite[\S27]{Jul18} asserts that every attracting cycle attracts a critical point.
\textsc{Singer} \cite[Theorem~2.7]{Sin78} proved a similar result for interval maps with negative \textsc{Schwarzian} derivative.

The analogous statement fails in the non-\textsc{Archimedean} setting.
When ${p > 0}$ and~$K$ is of characteristic zero, the polynomial~$z^p$ provides the canonical examples.
If~$\cO$ is a cycle of~$z^p$ different from~$\{ 0 \}$ and~$\{ \infty \}$, then its multiplier is~$p^{\# \cO}$ and it is thus attracting.
But~$\cO$ attracts no critical point because the only critical points are~$0$ and~$\infty$ and they are fixed.

The following theorem shows that these examples are extreme: Every attracting cycle with a smaller multiplier must attract a critical point.
For each integer~$d$ satisfying ${d \ge 2}$, put
\begin{equation}
  \label{eq:1}
  \lambda(d)
  \=
  \min \{ |m| \: m \in \{ 1, \ldots, d \} \},
\end{equation}
where the norm of~$m$ is computed in~$K$.
Note that ${0 \le \lambda(d) \le 1}$, that ${\lambda(d) = 0}$ holds if and only if the characteristic of~$K$ is positive and less than or equal to~$d$, and that ${\lambda(d) = 1}$ holds if and only if the characteristic of~$K$ is zero or strictly larger than~$d$.

\begin{theoalph}
  \label{t:critically-attracted}
  Let~$R$ be a rational map of degree~$d$ at least two with coefficients in~$K$, and~$\cO$ an attracting cycle of~$R$.
  If the multiplier~$\lambda$ of~$\cO$ satisfies ${|\lambda| < \lambda(d)^{\#\cO}}$, then~$\cO$ attracts a critical point of~$R$.
\end{theoalph}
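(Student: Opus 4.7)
The plan is to argue by contradiction. Suppose that $|\lambda| < \lambda(d)^{\#\cO}$ and that no critical point of~$R$ is attracted to~$\cO$. Set $n := \#\cO$ and, for each $y \in \cO$, let $U_y$ denote the Fatou component of~$R$ containing~$y$. I will appeal to the structure theory of immediate basins of non-Archimedean attracting cycles (presumably developed earlier in this paper, or in the author's prior work on $p$-adic Fatou--Julia theory) to identify each $U_y$ with an open Berkovich disk $D(y, r_y)^-$ for some $r_y > 0$. Moreover, $R$ restricts to a finite surjective analytic map $U_y \to U_{R(y)}$ of some topological degree $e_y \ge 1$.

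Since no critical point of~$R$ is attracted to~$\cO$, no $U_y$ contains a classical critical point of~$R$. Writing the local expansion
\[
  R(y + h) \;=\; R(y) \,+\, \mu_y h \,+\, \sum_{k \ge 2} a_{y,k} h^k,
  \qquad \mu_y := R'(y),
\]
the derivative $R'(y + h) = \mu_y + \sum_{k \ge 2} k a_{y,k} h^{k-1}$ has no zero on $\{|h| < r_y\}$; by the non-Archimedean Newton polygon criterion this is equivalent to
\begin{equation} \label{eq:plan-nocrit}
  |\mu_y| \;\ge\; |k| \cdot |a_{y,k}| \cdot r_y^{k-1} \quad \text{for every $k \ge 2$.}
\end{equation}

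The heart of the argument is the local estimate
\begin{equation} \label{eq:plan-local}
  |\mu_y| \;\ge\; |e_y| \cdot \frac{r_{R(y)}}{r_y}.
\end{equation}
The topological degree of the surjection $R|_{U_y}\colon D(y,r_y)^- \to D(R(y),r_{R(y)})^-$ being~$e_y$ forces the Newton polygon of $R(y+h) - R(y)$, considered on the closed disk $\{|h| \le r_y\}$, to attain its maximum value $r_{R(y)}$ last at the index $k = e_y$; that is, $|a_{y,e_y}| \, r_y^{e_y} = r_{R(y)}$. Substituting this equality into~\eqref{eq:plan-nocrit} with $k = e_y$ yields~\eqref{eq:plan-local} (the borderline case $e_y = 1$ being a direct consequence of the analytic isomorphism $R|_{U_y}\colon D(y,r_y)^- \to D(R(y), |\mu_y| r_y)^-$).

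Because $1 \le e_y \le d$, we have $|e_y| \ge \lambda(d)$. Multiplying~\eqref{eq:plan-local} over $y \in \cO$ and using that the ratios $r_{R(y)}/r_y$ telescope around the cycle,
\[
  |\lambda| \;=\; \prod_{y \in \cO} |\mu_y| \;\ge\; \prod_{y \in \cO} |e_y| \cdot \prod_{y \in \cO} \frac{r_{R(y)}}{r_y} \;=\; \prod_{y \in \cO} |e_y| \;\ge\; \lambda(d)^{n},
\]
contradicting the hypothesis. The main technical obstacle is the Newton-polygon identification of the topological degree $e_y$ with the terminal breakpoint of the polygon of $R(y+h) - R(y)$ on $\{|h| \le r_y\}$, together with the supporting fact that each immediate Fatou component is an open Berkovich disk centered at the corresponding cycle point; both require non-trivial non-Archimedean input and presumably hinge on the machinery hinted at in the paper's preamble. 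Sharpness is witnessed by $R(z) = z^p$ with $p > 0$ the residue characteristic: on any residue-disk periodic cycle, $e_y = d = p$ for every $y$, so $\prod_{y} |e_y| = |p|^n = \lambda(d)^n$.
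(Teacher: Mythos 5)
Your argument correctly handles the case where each immediate Fatou component $\aA(y)$ is an open Berkovich disk; there, your Newton polygon bound is essentially the content of \cref{l:critically-holomorphic}, and the conclusion follows by telescoping around the cycle exactly as you indicate. The gap is your premise that these components are \emph{always} disks. In the non-Archimedean setting this fails: the paper records (citing \cite[\emph{Th{\'e}or{\`e}me}~2]{Riv03c}) that $\aA(z)$ is a disk only when the immediate basin is not of \textsc{Cantor} type, and Cantor-type immediate basins---whose boundary is a Cantor set in $\PKber$---do exist and are precisely the source of difficulty here (see the conjecture in~\cref{ss:notes-references}). For such a component there is no single disk $\{|h| < r_y\}$ on which to expand $R(y+h)$ as a power series, so your inequality linking $|R'(y)|$ to $e_y$ and the component radii has no meaning.

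The natural repair, replacing $\aA(y)$ with the largest disk $D(y)$ containing $y$ and contained in $\aA(y)$, does not close the gap. \cref{l:disk-basin} shows that $R(D(y))$ may be strictly contained in $D(R(y))$, and that this happens exactly when $R$ has a pole at distance $\diam(D(y))$ from $y$. In that case the terminal Newton-polygon slope of $R(y+h)-R(y)$ on $\{|h|\le\diam(D(y))\}$ detects only $\diam(R(D(y)))$, which is strictly smaller than $\diam(D(R(y)))$, and the degree-identification you use to substitute the target radius into the estimate breaks down. The paper handles exactly this case via \cref{t:critically-mapped'} (invoked through \cref{l:critical-criterion}), which locates a critical value of a rational map in terms of its nearest pole; that result is the analytic core of the paper, built from the distorted log-size and residue computations of~\cref{s:critically-mapped}, and it is entirely absent from your argument. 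Without it, the Cantor case is uncovered and the proof is incomplete.
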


\cref{t:critically-attracted} is sharp if ${\lambda(d) > 0}$, see~\cref{ss:A-sharpness}.
When ${\lambda(d) = 0}$, the hypothesis of \cref{t:critically-attracted} is never satisfied.
In this case, there exist attracting cycles of every period and arbitrarily small multiplier attracting no critical point, see~\cref{ss:A-sharpness}.
When ${\lambda(d) = 1}$, \cref{t:critically-attracted} follows from \cite[Theorem~1.5]{BenIngJonLev14}.

In terms of the \emph{\textsc{Lyapunov} exponent~$\chi$ of~$\cO$}, defined by
\begin{equation}
  \label{eq:2}
  \chi
  \=
  \frac{1}{\# \cO} \log|\lambda|,
\end{equation}
the hypothesis of \cref{t:critically-attracted} corresponds to ${\chi < \log \lambda(d)}$.
Thus, the following corollary is a direct consequence of \cref{t:critically-attracted}.

\begin{coro}
  \label{c:critical-bound}
  Let~$R$ be a rational map of degree~$d$ at least two with coefficients in~$K$.
  Then, the number of cycles of~$R$ whose \textsc{Lyapunov} exponent is strictly less than~$\log \lambda(d)$ is finite, less than or equal to the number of critical points of~$R$.
\end{coro}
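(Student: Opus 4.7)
The plan is to deduce \cref{c:critical-bound} directly from \cref{t:critically-attracted} by a pigeonhole argument. First I would translate the hypothesis on the \textsc{Lyapunov} exponent into the form appearing in \cref{t:critically-attracted}. If $\cO$ is a cycle of $R$ whose \textsc{Lyapunov} exponent $\chi$ satisfies $\chi < \log \lambda(d)$, then by the defining equation~\eqref{eq:2} its multiplier $\lambda$ satisfies $|\lambda| < \lambda(d)^{\#\cO}$. Since $\lambda(d) \le 1$, this in particular gives $|\lambda| < 1$, so $\cO$ is automatically an attracting cycle, and the hypothesis of \cref{t:critically-attracted} applies.

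Next I would apply \cref{t:critically-attracted} to each such cycle to extract a critical point attracted to it. The corollary will then follow from the observation that the assignment sending a cycle $\cO$ to a critical point attracted by it can be made injective: the forward orbit of any point in $\PK$ converges to at most one cycle, so a single critical point cannot be attracted by two distinct cycles. Choosing for each qualifying $\cO$ one critical point it attracts therefore produces an injection from the set of cycles with $\chi < \log \lambda(d)$ into the set of critical points of $R$.

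Finally, I would close the argument by invoking the standard fact that a rational map of degree $d \ge 2$ over $K$ has only finitely many critical points, since they are the zeros of a nonzero polynomial of bounded degree (at most $2d-2$). This forces the set of cycles with $\chi < \log \lambda(d)$ to be finite with cardinality bounded above by $\#\Crit(R)$.

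I do not foresee any substantial obstacle in this deduction, as the entire content of the corollary is concentrated in \cref{t:critically-attracted} itself. The only point requiring care is the injectivity of the cycle-to-critical-point assignment, but this is immediate from the uniqueness of the limit set of a convergent forward orbit.
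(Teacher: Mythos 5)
Your argument is exactly the paper's intended deduction; the paper presents \cref{c:critical-bound} as a direct consequence of \cref{t:critically-attracted} without spelling out the details you supply (each qualifying cycle attracts a critical point, a single critical point's forward orbit has at most one limit cycle, and there are finitely many critical points). The one place to be slightly careful is that the cycles in question must lie in $\PK$ for the multiplier reformulation via~\eqref{eq:2} to kick in, but this is immediate since a cycle with finite Lyapunov exponent strictly below $\log\lambda(d)\le 0$ must be an attracting classical cycle.
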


More generally, let~$\nu$ be a \textsc{Borel} probability measure on the \textsc{Berkovich} projective line~$\PKber$ that is invariant by~$R$, and denote by~$\| R' \|$ the spherical derivative of~$R$.
The \emph{\textsc{Lyapunov} exponent~$\chi_{\nu}(R)$ of~$\nu$}, is defined by
\begin{equation}
  \label{eq:3}
  \chi_{\nu}(R)
  \=
  \il \log \| R' \| \dd \nu.
\end{equation}
Since~$\| R' \|$ is bounded from above, the integral is defined and~$\chi_{\nu}(R)$ is either a real number or~$-\infty$.
If~$\cO$ is a cycle of~$R$ contained in~$\PK$ and~$\nu_{\cO}$ is the probability measure equidistributed on~$\cO$, then~$\exp(\# \cO \chi_{\nu_\cO}(R))$ is the norm of the multiplier of~$\cO$.

\textsc{Favre} and the author showed that the \textsc{Lyapunov} exponent of every ergodic measure is greater than or equal to~$\log \lambda(d)$, unless it is supported on an attracting cycle.
See \cite[\emph{Corollaire}~3.7]{0FavRiv25}, and \cite{Jac19,Nie2202} for related results.
Together with \cref{t:critically-attracted}, this yields the following corollary as a direct consequence.

\begin{coro}
  \label{c:critically-attracted}
  Let~$R$ be a rational map of degree~$d$ at least two with coefficients in~$K$.
  Then, every \textsc{Borel} probability measure~$\nu$ on~$\PKber$ that is ergodic and invariant by~$R$ satisfies
  \begin{equation}
    \label{eq:4}
    \chi_{\nu}(R)
    \ge
    \log \lambda(d),
  \end{equation}
  unless~$\nu$ is supported on a cycle that attracts a critical point.
  In particular, the number of those~$\nu$ for which~\eqref{eq:4} fails is finite, less than or equal to the number of critical points of~$R$.
\end{coro}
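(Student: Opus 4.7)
The plan is to combine \cref{t:critically-attracted} with the \textsc{Lyapunov} lower bound of \cite[\emph{Corollaire}~3.7]{0FavRiv25} recalled just above the statement. First I would suppose that an ergodic $R$-invariant probability measure~$\nu$ on~$\PKber$ violates~\eqref{eq:4}, i.e., ${\chi_{\nu}(R) < \log \lambda(d)}$, and aim to show that~$\nu$ is supported on a cycle that attracts a critical point. The cited lower bound immediately forces~$\nu$ to be supported on an attracting cycle~$\cO$ contained in~$\PK$; since~$R$ cyclically permutes the finitely many points of~$\cO$, ergodicity and invariance then identify~$\nu$ with the equidistributed measure~$\nu_{\cO}$.

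Next I would translate the failure of~\eqref{eq:4} into a condition on the multiplier~$\lambda$ of~$\cO$. The identification of~$\exp(\# \cO \, \chi_{\nu_{\cO}}(R))$ with~$|\lambda|$, already noted after~\eqref{eq:3}, shows that ${\chi_{\nu}(R) < \log \lambda(d)}$ is equivalent to ${|\lambda| < \lambda(d)^{\# \cO}}$, which is precisely the hypothesis of \cref{t:critically-attracted}. The theorem then yields that~$\cO$ attracts a critical point of~$R$, establishing the main assertion.

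For the quantitative statement, I would observe that any given critical point of~$R$ lies in the basin of at most one attracting cycle. Together with the identification~$\nu \leftrightarrow \cO$ from above, this produces an injection from the set of ergodic invariant measures violating~\eqref{eq:4} into the set of critical points of~$R$, yielding the claimed bound. Since both key ingredients are stated explicitly just above the corollary, I do not foresee any substantive obstacle; the only delicate point is the identification of an ergodic measure carried by a finite cycle with its equidistribution, which follows at once from transitivity of the~$R$-action on~$\cO$.
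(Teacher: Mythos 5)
Your proof is correct and follows essentially the same route the paper intends: invoke the Favre--Rivera-Letelier lower bound to locate $\nu$ on an attracting cycle $\cO$, identify $\nu$ with $\nu_{\cO}$, translate the Lyapunov hypothesis into the multiplier condition ${|\lambda| < \lambda(d)^{\#\cO}}$, apply \cref{t:critically-attracted}, and close with the disjointness of basins for the counting claim. Nothing substantive is missing.
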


The next result improves the criterion of \textsc{Benedetto}, \textsc{Ingram}, \textsc{Jones}, and \textsc{Levy} \cite[Theorem~1.2]{BenIngJonLev14} for a rational map to be post-critically infinite.

\begin{theoalph}
  \label{t:infinitely-attracted}
  Let~$R$ be a rational map of degree~$d$ at least two with coefficients in~$K$, and~$\cO$ an attracting cycle of~$R$ of \textsc{Lyapunov} exponent~$\chi$.
  Then,
  \begin{equation}
    \label{eq:5}
    -\infty
    <
    \chi
    <
    \begin{cases}
      d \log \lambda(d)
      & \text{if } \# \cO \le 2d - 2;
      \\
      \left(1 + \frac{2(d - 1)^2}{\# \cO} \right) \log \lambda(d)
      & \text{if } \# \cO > 2d - 2
    \end{cases}
  \end{equation}
  implies that~$\cO$ attracts a critical point of~$R$ whose forward orbit is infinite.
\end{theoalph}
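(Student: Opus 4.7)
The plan is to proceed by contradiction. Assume every critical point of $R$ attracted to $\cO$ has finite forward orbit. Since the hypothesis $\chi > -\infty$ forces $\lambda \neq 0$, no point of $\cO$ is itself critical, so each such attracted critical point $c$ is strictly preperiodic: $R^{k_c}(c) \in \cO$ for some minimal $k_c \geq 1$. The strategy is to extract from this preperiodicity a lower bound on $|\lambda|$ that contradicts \eqref{eq:5}.

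Set $n = \#\cO$ and $F = R^n$, fix $x_0 \in \cO$, and let $\cB_0 \subset \PKber$ denote the immediate basin of $x_0$ for $F$. Since \eqref{eq:5} is strictly stronger than $|\lambda| < \lambda(d)^n$, \cref{t:critically-attracted} furnishes at least one critical point of $R$ attracted to $\cO$; because $F$ cyclically permutes the immediate basin components around $\cO$, each such component contains a critical point of $F = R^n$. Standard structural results for immediate basins of attracting cycles in non-\textsc{Archimedean} dynamics then show that $\cB_0$ is a \textsc{Berkovich} open disk and $F|_{\cB_0} \colon \cB_0 \to \cB_0$ is proper of some degree $D \geq 2$, and by the contradiction assumption every critical point of $F|_{\cB_0}$ is preperiodic to $x_0$, so the critical locus of $F|_{\cB_0}$ lies entirely in $\bigsqcup_{j \geq 0} F^{-j}(x_0) \cap \cB_0$.

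Next I pass from the preperiodicity of the critical locus to a quantitative constraint. Critical points of $F = R^n$ in $\cB_0$ correspond, via the chain rule, to the at most $2d - 2$ critical points of $R$ attracted to $\cO$, weighted by their multiplicities and by the length of their forward $R$-orbit inside the basin of $\cO$ (a quantity bounded in terms of the depths $k_c$ and of $n$). A \textsc{Riemann}--\textsc{Hurwitz}-type formula for proper maps between \textsc{Berkovich} open disks then relates the total critical multiplicity, plus any wild-ramification contribution in positive residue characteristic, to $D - 1$, and organizes the preimage tree of $x_0$ inside $\cB_0$ into finitely many levels.

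The heart of the argument is then a \textsc{Koenigs}-style multiplier bound. I identify the maximal $F$-invariant subdisk $U \ni x_0$ of $\cB_0$ on which $F$ is conjugate to $z \mapsto \lambda z$: the radius of $U$ is pinned down by the nearest critical orbit of $F$ outside $U$, and admissible linearization radii for a non-\textsc{Archimedean} map of local degree at most $d$ are controlled by $\lambda(d)$ (this is the same mechanism that produces $\lambda(d)$ in \cref{t:critically-attracted}). Propagating these radius constraints along the preimage tree of $x_0$, keeping track of the factor of $(d-1)$ each critical point contributes and capping the total critical budget at the $2d - 2$ critical points of $R$, I expect to arrive at $|\lambda| \geq \lambda(d)^{n + (d-1)\min(n,\, 2d-2)}$, which is precisely the bound failing under \eqref{eq:5}: the two regimes in the statement correspond to whether the minimum is achieved by the period $n$ (the case $\#\cO \leq 2d - 2$) or by the total critical budget (the case $\#\cO > 2d - 2$). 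The main technical obstacle will be matching this exponent exactly, in particular controlling the interplay between wild ramification and \textsc{Koenigs} linearization in positive residue characteristic, where both the \textsc{Riemann}--\textsc{Hurwitz} count and the admissible linearization radii receive subtle corrections.
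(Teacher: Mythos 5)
Your proposal takes a genuinely different route from the paper, and it has two significant gaps.

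First, the structural claim that the immediate basin component $\cB_0$ is a \textsc{Berkovich} open disk is false in general. The paper carefully distinguishes immediate basins of \textsc{Cantor} type (where the boundary of each $\aA(z)$ is a \textsc{Cantor} set, not a single point) from those that are disks, citing \cite[\emph{Th{\'e}or{\`e}me}~2]{Riv03c}; the definition of the bookkeeping quantity $\ell(z)$ in \cref{s:critically-attracted} bifurcates exactly on this distinction, as does the inductive step of the paper's proof. Your ``standard structural results'' only give the disk structure in the non-\textsc{Cantor} case, so the \textsc{Riemann}--\textsc{Hurwitz} count you propose for a proper map between disks has no footing in the \textsc{Cantor} case, and you would need an entirely separate argument there.

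Second, even granting the disk structure, the \textsc{Koenigs}-style step is a hope, not a proof. You concede this yourself: the interplay between wild ramification and linearization radii ``receives subtle corrections,'' and pinning the linearization radius to ``the nearest critical orbit of $F$ outside $U$'' with the specified $\lambda(d)$-controlled error is precisely the quantitative content that would need to be established --- it is not a consequence of the mechanism behind \cref{t:critically-attracted}, whose proof has no linearization in it. The paper avoids this entirely. It does not argue by contradiction and does not linearize; instead it runs a forward induction on $n$, fixing a well-chosen $z_0 \in \cO$ (via maximizing a partial sum of corrected log-derivatives $u(z)$), and at each step locates a critical value $v_n$ of a single application of $R$ that is close to, but distinct from, the current orbit point $z_{j_n+1}$, while maintaining univalence of $R^{n - j_n - 1}$ on the ball up to $v_n$. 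The quantitative tools are Theorems~\ref{t:critically-mapped'}, \ref{t:strictly-critically-mapped'}, and~\ref{t:strictly-critically-mapped''} (single-map critical-value location via \textsc{Newton}-polygon/distorted-log-size residue counts), Lemma~\ref{l:critically-holomorphic'}, and the elementary injectivity-radius Lemma~\ref{l:injectivity-radius}; the conclusion follows because there are only finitely many critical points, so some $c_n$ must have an orbit staying off $\cO$ for a full period, hence forever. This local, one-step-at-a-time mechanism is how the paper gets the exact exponents in \eqref{eq:5} without ever needing a global understanding of the preperiodic critical tree; your preimage-tree propagation step, which is where you would have to match those exponents, is entirely unspecified and is the hardest part of the route you propose.
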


When the period~$\# \cO$ of~$\cO$ is large, the upper bound in~\eqref{eq:5} is close to~$\log \lambda(d)$, which is the upper bound in the hypothesis of \cref{t:critically-attracted}.
Hence, \cref{t:infinitely-attracted} is nearly sharp if~$\#\cO$ is large.
When~$\cO$ consists of a fixed point, \cref{t:infinitely-attracted} coincides with \cite[Theorem~1.2]{BenIngJonLev14}.
Applying the latter to the iterates of~$R$ yields a result analogous to \cref{t:infinitely-attracted}, but under a more restrictive hypothesis.
Indeed, for~$n$ in~$\Z_{> 0}$, applying \cite[Theorem~1.2]{BenIngJonLev14} to~$R^n$ requires
\begin{equation}
  \label{eq:156}
  \chi
  <
  \frac{d^n}{n} \log \lambda(d^n),
\end{equation}
which is more restrictive than ${\chi < d \log \lambda(d)}$, and hence than the upper bound in~\eqref{eq:5}.

\cref{t:infinitely-attracted'} in~\cref{s:critically-attracted} improves the upper bound in the hypothesis~\eqref{eq:5} of \cref{t:infinitely-attracted}, expressing it in terms of the number of critical points attracted to~$\cO$ and the degree of~$R$ on the immediate basin of~$\cO$.
\cref{t:infinitely-attracted'} also yields the following improvement of \cref{t:infinitely-attracted} for polynomials.
For each integer~$d$ satisfying ${d \ge 2}$, put
\begin{equation}
  \label{eq:6}
  \hlambda(d)
  \=
  \min \{ |m|^m \: m \in \{1, \ldots, d \}\}.
\end{equation}

\begin{coro}
  \label{c:infinitely-attracted'}
  Let~$P$ be a polynomial of degree~$d$ at least two with coefficients in~$K$, and~$\cO$ an attracting cycle of~$P$ of \textsc{Lyapunov} exponent~$\chi$.
  Then,
  \begin{equation}
    \label{eq:7}
    -\infty
    <
    \chi
    <
    \begin{cases}
      \log \hlambda(d)
      & \text{if } \# \cO \le d - 1;
      \\
      \log \lambda(d) + \frac{d - 1}{\# \cO} \log \frac{\hlambda(d)}{\lambda(d)}
      & \text{if } \# \cO > d - 1
    \end{cases}
  \end{equation}
  implies that~$\cO$ attracts a critical point of~$P$ whose forward orbit is infinite.
\end{coro}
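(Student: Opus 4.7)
The plan is to deduce \cref{c:infinitely-attracted'} from \cref{t:infinitely-attracted'}, the refined form of \cref{t:infinitely-attracted} that expresses the upper bound on~$\chi$ in terms of the number~$c$ of critical points attracted to~$\cO$ and the degree~$e$ of~$R^{\# \cO}$ on a connected component of the immediate basin of~$\cO$. The polynomial assumption on~$P$ constrains~$c$ and~$e$ sharply, and substituting these constraints into the refined bound should recover the closed-form expressions in~\eqref{eq:7}.

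First I would reduce to the case ${\cO \neq \{ \infty \}}$. Since the local degree of~$P$ at~$\infty$ equals~$d$, the point~$\infty$ is a superattracting fixed point, so the cycle~$\{ \infty \}$ has multiplier zero and \textsc{Lyapunov} exponent~$-\infty$; the hypothesis ${-\infty < \chi}$ therefore places~$\cO$ inside~$K$, with immediate basin disjoint from the \textsc{Berkovich} Fatou component containing~$\infty$. Because~$\infty$ is a critical point of~$P$ of multiplicity ${d - 1}$ and the total multiplicity of critical points of~$P$ on the projective line equals ${2d - 2}$, the cycle~$\cO$ can attract at most ${d - 1}$ critical points, counted with multiplicity. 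A \textsc{Riemann}--\textsc{Hurwitz} count applied to the action of~$P^{\# \cO}$ on a component of the immediate basin bounds the degree~$e$ of this action by~$d$ and ties it to the local count of attracted critical points, so that each admissible pair $(c, e)$ arising from a polynomial corresponds to some integer ${m \in \{ 1, \ldots, d \}}$ whose contribution to the bound of \cref{t:infinitely-attracted'} carries a factor of~$|m|^m$ rather than the factor~$|m|$ appearing in the general rational case.

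Taking the worst configuration over polynomial-admissible values of~$m$ then turns the defining minimum of~$\lambda(d)$ into the defining minimum ${\min_{m \in \{ 1, \ldots, d \}} |m|^m = \hlambda(d)}$. The short-cycle regime ${\# \cO \le d - 1}$ is governed by a single component of the immediate basin absorbing all the attracted criticality, yielding the bound~$\log \hlambda(d)$, while for ${\# \cO > d - 1}$ the at most ${d - 1}$ attracted critical points must spread across the~$\# \cO$ periodic components, so the bound averages to the interpolated expression ${\log \lambda(d) + \tfrac{d - 1}{\# \cO} \log(\hlambda(d) / \lambda(d))}$; continuity at the threshold ${\# \cO = d - 1}$ serves as a consistency check. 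The principal difficulty lies in this last step: one must identify which distributions of the ${d - 1}$ attracted critical multiplicities across the~$\# \cO$ components of the immediate basin are extremal for the inequality in \cref{t:infinitely-attracted'}, and verify that the extremum is always attained by a configuration concentrated at a single value of~$m$, so that the clean expression~$\hlambda(d)$ indeed governs the result.
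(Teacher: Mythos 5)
Your strategy is the right one: invoke \cref{t:infinitely-attracted'} and exploit the polynomial constraint that at most $d-1$ critical points lie in $K$ (hence ${\#\cO_{\crit} \le d-1}$, after ruling out ${\cO = \{\infty\}}$ via ${\chi > -\infty}$). The interpolation between the two regimes of~\eqref{eq:7} according to whether ${\#\cO \le d-1}$ or not is also the correct final step. But there is a genuine gap in the middle, and the way you frame the final step suggests more work than is actually required.

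The gap is that your \textsc{Riemann}--\textsc{Hurwitz} count does not, by itself, produce the factor $|m|^m$. The quantity $\ell(\cO)$ in \cref{t:infinitely-attracted'} is built from the local factors of~\eqref{eq:96} or~\eqref{eq:97}--\eqref{eq:99}, and these are \emph{defined differently} according to whether the immediate basin of~$\cO$ is of \textsc{Cantor} type or not. In the \textsc{Cantor} case a point $z \in \cO_{\crit}$ only contributes ${\ell(z) = |\wmax(R)|^{\deg_R(\aA(z))}}$, and this need not be bounded below by~$\hlambda(d)$: for instance with ${p = 3}$, ${d = 4}$, ${\wmax(R) = 3}$ one has ${|\wmax(R)|^d = 3^{-4} < 3^{-3} = \hlambda(4)}$. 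The crucial missing input is the fact, quoted in the paper from \cite[\emph{Th\'eor\`eme}~2]{Riv03c}, that the immediate basin of a \emph{finite} attracting cycle of a \emph{polynomial} is never of \textsc{Cantor} type, so each $\aA(z)$ is a disk. This forces one into the non-\textsc{Cantor} branch of the definition, where ${\hell(z) = \min_{D} |\deg_R(D)|^{\deg_R(D)}}$ over disks ${z \in D \subseteq \aA(z)}$ is trivially at least~$\hlambda(d)$. One should also verify that the correction factor~$\theta$ in~\eqref{eq:99} does not spoil this: when ${\theta \ne 1}$ some non-critical component has $p \mid \deg_R(\aA(z'))$, hence ${p \le d}$ and ${\theta = |p|^{1/(p-1)} \ge |p|^p \ge \hlambda(d)}$.

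Once one has ${\ell(z) \ge \hlambda(d)}$ for ${z \in \cO_{\crit}}$ and ${\ell(z) = |\deg_R(\aA(z))| \ge \lambda(d)}$ for ${z \notin \cO_{\crit}}$, there is no extremization over ``distributions of multiplicities'' to perform. The product defining $\ell(\cO)$ in~\eqref{eq:100} is bounded below \emph{pointwise}; the only free parameter is ${\#\cO_{\crit}}$, which is at most $\min(\#\cO, d-1)$, and since ${\hlambda(d) \le \lambda(d)}$ the worst case is ${\#\cO_{\crit} = \min(\#\cO, d-1)}$. Substituting gives exactly the two branches of~\eqref{eq:7}, matching at ${\#\cO = d-1}$ as you observed.
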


Since ${\hlambda(d) \ge \lambda(d)^d}$, the hypothesis on~$\chi$ in \cref{c:infinitely-attracted'} is weaker than that in \cref{t:infinitely-attracted}.
When~$\cO$ consists of a fixed point, \cref{c:infinitely-attracted'} coincides with \cite[Theorems~5.1]{BenIngJonLev14}.

The proofs of Theorems~\ref{t:critically-attracted} and~\ref{t:infinitely-attracted} rely on (a more precise version of) the following result in non-\textsc{Archimedean} analysis.

\begin{theoalph}
  \label{t:critically-mapped}
  Let~$d$ be an integer satisfying ${d \ge 2}$ and ${\lambda(d) > 0}$, and~$Q$ a rational map of degree~$d$ with coefficients in~$K$ that is not a polynomial and satisfies ${Q(\infty) = \infty}$.
  Let~$z_0$ be in~$K$ that is not a pole of~$Q$, and denote by~$r$ the shortest norm distance from~$z_0$ to a pole of~$Q$.
  Then, $Q$ has a critical value~$v$ satisfying
  \begin{equation}
    \label{eq:8}
    |v - Q(z_0)|
    \le
    \lambda(d)^{-1} |Q'(z_0)| r.
  \end{equation}
\end{theoalph}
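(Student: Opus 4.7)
Plan. The strategy is a proof by contradiction using a Berkovich étale-cover analysis: assume no critical value of $Q$ lies in $\bar D(Q(z_0), \lambda(d)^{-1} |Q'(z_0)| r)$ and derive a conflict with the presence of the pole at distance $r$ from $z_0$.

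Normalization and setup. After composing with affine maps on source and target (which send critical points to critical points and permute critical values equivariantly), I would reduce to the case $z_0 = 0$, $Q(0) = 0$, $Q'(0) = 1$, and $r = 1$, so that the closest pole $w$ of $Q$ satisfies $|w| = 1$. The assertion becomes: there is a critical value $v$ of $Q$ with $|v| \le \lambda(d)^{-1}$. Suppose, for contradiction, that $Q$ has no critical value in $\bar D(0, \lambda(d)^{-1})$. Then $Q$ is étale over this closed Berkovich disk, and by the trivialization of étale covers over simply connected Berkovich disks --- valid in the tame regime guaranteed by $\lambda(d) > 0$ --- the preimage $Q^{-1}(\bar D(0, \lambda(d)^{-1}))$ decomposes as a disjoint union of closed Berkovich disks, each mapped bijectively onto the target. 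Let $V$ be the component containing $0$; since $Q'(0) = 1 \neq 0$ gives local degree $1$ at $0$, the restriction $Q|_V \colon V \to \bar D(0, \lambda(d)^{-1})$ is an analytic bijection, and $V$ does not contain $w$ because $Q(w) = \infty$ lies outside the target.

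Reaching the contradiction. The $K$-points $V(K)$ form a closed disk $\bar D(0, \rho) \subset K$; together with $w \notin V(K)$ and $|w| = 1$ this forces $\rho < 1$. Writing the Taylor expansion $Q(z) = z + \sum_{n \ge 2} a_n z^n$ on $V(K)$, the injectivity of $Q|_{V(K)}$, derived from the ultrametric factorization
\[
Q(z_1) - Q(z_2) = (z_1 - z_2)\Bigl(1 + \sum_{n \ge 2} a_n \sum_{i + j = n - 1} z_1^i z_2^j\Bigr),
\]
forces $|a_n| \rho^{n-1} < 1$ for all $n \ge 2$. This yields $|a_n| \rho^n < \rho$ for $n \ge 2$, and hence $\max_n |a_n| \rho^n = |a_1| \rho = \rho$. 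Surjectivity of $Q|_{V(K)}$ onto $\bar D(0, \lambda(d)^{-1})$ then gives $\rho = \lambda(d)^{-1}$. Since $\lambda(d) \le 1$, this implies $\rho \ge 1$, contradicting $\rho < 1$.

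Main obstacle. The principal technical hurdle is the Berkovich étale-cover trivialization invoked in the second paragraph: étale covers of closed Berkovich disks are (in the tame regime) disjoint unions of disks mapped bijectively onto the target, and the hypothesis $\lambda(d) > 0$ --- equivalent to the residue characteristic of $K$ being either zero or strictly greater than $d$ --- is what keeps the analysis tame for all ramification orders up to $d$. A further subtle point is maintaining the closed/open disk dichotomy consistently (the target disk is closed, so the preimage components are closed, and their $K$-points form closed disks); without this care, the final inequality $\rho < 1 \le \rho$ could be blurred by an open/closed mismatch at the boundary.
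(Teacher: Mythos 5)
Your argument hinges on a single technical claim that is false under the stated hypothesis, and the error is precisely the one that makes the theorem nontrivial.

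You assert that the hypothesis $\lambda(d) > 0$ is \emph{equivalent to the residue characteristic of $K$ being either zero or strictly greater than $d$}, and use this to invoke a tame \'etale-trivialization theorem for Berkovich disks. But $\lambda(d) > 0$ is equivalent to the \emph{characteristic of the field $K$} being zero or $> d$, not the residue characteristic; the condition equivalent to ``residue characteristic $0$ or $> d$'' is $\lambda(d) = 1$, not $\lambda(d) > 0$. For instance, $K = \C_p$ with $d \ge p$ has $\lambda(d) > 0$ (all nonzero integers have positive $p$-adic norm) while $\lambda(d) = |p|^{\lfloor \log_p d \rfloor} < 1$, and there wild ramification is genuinely present. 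Concretely, for $Q(z) = z^p$ over $\C_p$, the component of $Q^{-1}\!\bigl(\overline D(1, |p|^{p/(p-1)})\bigr)$ containing~$1$ is the closed disk $\overline D(1, |p|^{1/(p-1)})$, on which $Q$ has degree $p$ and no critical point whatsoever; no critical value of $Q$ lies anywhere near $1$. So \'etale covers of Berkovich disks are \emph{not} disjoint unions of disks mapping bijectively in the regime your hypothesis actually allows, and your step~2 collapses. This is not a repairable side issue: the factor $\lambda(d)^{-1}$ in \eqref{eq:8} exists precisely to account for wild inseparability at type-II points (in the genuinely tame case $\lambda(d) = 1$ the bound reduces to $|Q'(z_0)|\,r$, and only there does your argument have a chance).

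A second, related gap: even in the tame case your step ``$V(K)$ forms a closed disk $\overline D(0,\rho)$'' is extracted from the trivialization theorem; without it, the component $V$ of the preimage need not be a disk at all --- it can be a connected affinoid with several boundary circles (the case the paper calls ``$X_0$ not a ball''). This is exactly the bifurcation the paper's proof makes: when the relevant component is a ball, the Newton-polygon estimate on $Q'$ (which you essentially carry out at the end) closes the argument; when it is not a ball, one needs the residue-counting machinery built on the distorted log-size function $G_d$, and that is where the bulk of the work lies. Your contradiction scheme and the final power-series estimate are fine as far as they go, and with $V$ known to be a ball the sharper reading $|n a_n|\rho^{n-1} < |a_1|$ from the Newton polygon of $Q'$ would even handle the wild case; but establishing that $V$ is a ball and handling the alternative are precisely the missing content.
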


\cref{t:critically-mapped} is sharp if~$d$ is not a power of~$p$, see~\cref{ss:C-sharpness}.

The following corollary provides an upper bound for ${|v - Q(z_0)|}$ in terms of the shortest norm distance from~$z_0$ to a pole or preimage of~$Q(z_0)$ by~$Q$ distinct from~$z_0$.
It follows from \cref{t:critically-mapped} together with power series computations.
For an integer~$d$ satisfying ${d \ge 2}$ and ${\lambda(d) > 0}$, observe that ${\lambda(d) < 1}$ implies ${p > 0}$ and ${0 < |p| < 1}$, and define
\begin{equation}
  \label{eq:9}
  \gamma(d)
  \=
  \begin{cases}
    1
    & \text{if } \lambda(d) = 1;
    \\
    |p|^{-\frac{1}{p - 1}}
    & \text{if } \lambda(d) < 1.
  \end{cases}
\end{equation}

\begin{coro}
  \label{c:critically-mapped}
  Let~$d$ be an integer satisfying ${d \ge 2}$ and ${\lambda(d) > 0}$, and~$Q$ a rational map of degree~$d$ with coefficients in~$K$ satisfying ${Q(\infty) = \infty}$.
  Let~$z_0$ be in~$K$ that is not a pole of~$Q$, and denote by~$r_{\bullet}$ the shortest norm distance from~$z_0$ to a pole or a preimage of~$Q(z_0)$ by~$Q$ distinct from~$z_0$.
  Then, ${Q}$ has a critical value~$v$ satisfying
  \begin{equation}
    \label{eq:10}
    |v - Q(z_0)|
    \le
    \gamma(d) \lambda(d)^{-1} |Q'(z_0)| r_{\bullet}.
  \end{equation}
\end{coro}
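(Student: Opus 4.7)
The plan is to translate so that $z_0 = 0$ and $Q(0) = 0$, expand $Q(\zeta) = \sum_{n \ge 1} b_n \zeta^n$ as a power series convergent on $B(0, r)$ (where $r \in (0, +\infty]$ is the distance to the nearest pole of $Q$, with $r = +\infty$ for $Q$ a polynomial), and split into two cases. If $r \le \gamma(d) r_\bullet$, then $Q$ has a finite pole and \cref{t:critically-mapped} applies directly to produce a critical value $v$ with
\[
|v - Q(0)| \le \lambda(d)^{-1} |Q'(0)| r \le \gamma(d) \lambda(d)^{-1} |Q'(0)| r_\bullet,
\]
and the conclusion follows.

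Otherwise $r > \gamma(d) r_\bullet$, a case that includes all polynomials. I would locate a critical point $c$ of $Q$ in $B(0, \gamma(d) r_\bullet)$ via Newton polygon analysis and then estimate $|Q(c)|$ from the power series. The power series $h(\zeta) \= Q(\zeta)/\zeta$ has its nearest zero at magnitude $r_\bullet$, so by Newton polygon its first segment (after the initial vertex at the $b_1$ term) corresponds to some index $n_0$ with slope $\log r_\bullet$, and $2 \le n_0 \le d$ since $h$ has at most $d - 1$ finite zeros. This gives the universal bound $|b_n| r_\bullet^{n-1} \le |b_1|$ for every $n \ge 1$. Applying the same analysis to $Q'(\zeta) = \sum n b_n \zeta^{n-1}$ with the index $n_0$ produces a critical point $c$ of magnitude at most $r_\bullet |n_0|^{-1/(n_0 - 1)} \le \gamma(d) r_\bullet$, invoking the elementary inequality $|n|^{-1/(n-1)} \le \gamma(d)$ for $2 \le n \le d$, which is sharp at $n = p$ and reduces in exponents to $k(p - 1) \le p^k - 1$.

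With $c$ in hand, I would combine $Q(c) = \sum_n b_n c^n$ with the critical-point relation $0 = c Q'(c) = \sum_n n b_n c^n$ to get $Q(c) = -\sum_{n \ge 2}(n-1) b_n c^n$. The Newton polygon of $Q'$ has first slope $\log|c|$, so $|n b_n| \cdot |c|^{n-1} \le |b_1|$ for every $n \ge 1$, which rewrites each summand as
\[
|(n-1) b_n c^n| \le |b_1| \cdot |c| \cdot \frac{|n-1|}{|n|}.
\]
For $2 \le n \le d$ we have $|n| \ge \lambda(d)$ and $|n-1| \le 1$, hence $|n-1|/|n| \le \lambda(d)^{-1}$; combined with $|c| \le \gamma(d) r_\bullet$ this yields the target bound $\gamma(d) \lambda(d)^{-1} |Q'(0)| r_\bullet$. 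For polynomials the sum stops at $n = d$, completing the proof.

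The main technical obstacle is treating the summands with $n > d$ when $Q$ is a non-polynomial rational function, since $|n-1|/|n|$ can exceed $\lambda(d)^{-1}$ (for example at $n = p^{\lfloor \log_p d \rfloor + 1}$). I would close this gap by exploiting that $Q'$ has at most $2d - 2$ zeros by Riemann--Hurwitz, so the Newton polygon of $Q'$ admits only finitely many descending segments; past a bounded horizontal index the slopes meet or exceed $\log r$, whence $|n b_n| \cdot |c|^{n-1}$ picks up an extra geometric decay factor of the form $(|c|/r)^{n - \mathrm{const}}$. Since $|c| < r$ strictly here, this decay beats the at-worst polynomial growth of $|n - 1|/|n|$ and keeps $|(n - 1) b_n c^n|$ below the target bound. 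Taking $v \= Q(c)$ then concludes.
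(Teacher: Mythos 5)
Your proposal follows the paper's proof of \cref{c:critically-mapped'} almost verbatim up to the point of locating the critical point: the case split according to whether a pole of $Q$ lies within norm $\gamma(d) r_\bullet$ of $z_0$, and the Newton polygon analysis producing $c$ with $|c| \le |n_0|^{-1/(n_0-1)} r_\bullet \le \gamma(d) r_\bullet$, both match the paper. The difference---and the gap---is in the estimate of $|Q(c)|$.

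Expanding $Q(c) = -\sum_{n\ge 2}(n-1)b_nc^n$ and bounding term by term via $|nb_n||c|^{n-1}\le|b_1|$ closes the polynomial case, since only $2\le n\le d$ occur there. For a rational map the series is infinite, and, as you note, $|n-1|/|n|$ can exceed $\lambda(d)^{-1}$ at prime powers $n>d$. Your proposed decay factor $(|c|/r)^{n-\mathrm{const}}$ does not fill this gap as argued. The Newton polygon of $Q'$ supplies no decay before horizontal index $k$, where $k$ is the number of critical points of $Q$ in $\{z\colon |z|<r\}$, and $k$ is bounded only by $2d-2$, not by $d-1$: already for $Q(z)=(z-z^2)/(1-z/\pi)$ with $|\pi|$ slightly above $2$ in residue characteristic $2$, both of its $2d-2=2$ critical points have norm $|\pi|^{1/2}<|\pi|$. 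So for $d\ge 3$ one must contend with prime powers $n$ satisfying $d<n\le k+1$ (e.g.\ $n=4$ when $p=2$, $d=3$), which receive no decay whatsoever, and even past index $k+1$ the ratio $|c|/r$ is only known to be $<1$ (the case hypothesis gives $|c|\le\gamma(d)r_\bullet<r$ and nothing more). ``Decay beats polynomial growth'' settles the tail but not these first indices.

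The correct mechanism is the paper's and does not use $Q'(c)=0$ at all: $|Q(c)|\le\max_n|b_n||c|^n=|b_\delta||c|^\delta$, where $\delta$ is the local degree of $Q$ on the ball of radius $|c|$, so that $2\le\delta\le d$ and hence $|\delta|\ge\lambda(d)$. The same Newton polygon inequality, now at index $\delta$, gives $|\delta b_\delta||c|^{\delta-1}\le|b_1|$, whence $|b_\delta||c|^\delta\le|b_1||c|/|\delta|\le\lambda(d)^{-1}\gamma(d)|Q'(0)|r_\bullet$; this is the chain \eqref{eq:60} concluding the paper's proof. Every summand satisfies $|(n-1)b_nc^n|\le|b_n||c|^n\le|b_\delta||c|^\delta$, so the large-$n$ terms never contribute. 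The fact missing from your argument is that $\max_n|b_n||c|^n$ is already attained at an index at most $d=\deg Q$, which sidesteps the problematic indices entirely.
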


\cref{c:critically-mapped} is sharp, see~\cref{ss:C-sharpness}.
The upper bound in~\eqref{eq:10} sharpens the bound~$|\lambda(d)|^{-d} |Q'(z_0)| r_{\bullet}$ from \cite[Theorem~1.4]{BenIngJonLev14}.
The proof of \cite[Theorem~1.4]{BenIngJonLev14} yields a variant of \cref{c:critically-mapped} ensuring ${v \neq Q(z_0)}$, but with a larger upper bound for ${|v - Q(z_0)|}$.
See \cref{t:strictly-critically-mapped''} in~\cref{s:critically-mapped}, which is used in the proof of \cref{t:infinitely-attracted}.

The following variant of \cref{t:critically-mapped} also ensures ${v \neq Q(z_0)}$ and (a more precise version of it) is used in the proof of \cref{t:infinitely-attracted}.

\begin{theoalph}
  \label{t:strictly-critically-mapped}
  Let~$d$, $Q$, $z_0$, and~$r$ be as in \cref{t:critically-mapped}.
  Then, $Q$ has a critical value~$v$ satisfying
  \begin{equation}
    \label{eq:11}
    0
    <
    |v - Q(z_0)|
    \le
    \lambda(d)^{-(d - 1)} |Q'(z_0)| r.
  \end{equation}
\end{theoalph}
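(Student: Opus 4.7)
The plan is to deduce \cref{t:strictly-critically-mapped} from \cref{t:critically-mapped} together with a counting argument and an iterative refinement of its quantitative core. First, I observe that under the hypotheses, a finite critical value $v \neq Q(z_0)$ always exists. Let $k_1 = \#Q^{-1}(Q(z_0))$ and $k_2 = \#Q^{-1}(\infty)$. By Riemann--Hurwitz, the $2d - 2$ critical points of $Q$ counted with multiplicity contribute $d - k_1$ from the fiber over $Q(z_0)$ and $d - k_2$ from the fiber over $\infty$. Since $z_0 \in Q^{-1}(Q(z_0))$ and $\infty \in Q^{-1}(\infty)$, we have $k_1, k_2 \geq 1$. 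If all critical values lay in $\{Q(z_0), \infty\}$, then $2d - 2 = 2d - k_1 - k_2$, forcing $k_1 = k_2 = 1$; but $k_2 = 1$ means $Q$ has no finite pole, i.e., $Q$ is a polynomial, contradicting the hypothesis. Hence a finite critical value $v \neq Q(z_0)$ must exist.

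To locate such a $v$ quantitatively, I would first apply \cref{t:critically-mapped} to obtain a critical value $v_0$ with $|v_0 - Q(z_0)| \leq \lambda(d)^{-1} |Q'(z_0)| r$. If $v_0$ is finite and distinct from $Q(z_0)$, the bound~\eqref{eq:11} holds immediately, since $\lambda(d) \leq 1$. Otherwise, the critical point produced by \cref{t:critically-mapped} maps to $Q(z_0)$, and hence is one of the critical points of $Q$ lying in $Q^{-1}(Q(z_0)) \cap K$ (of which there are at most $d - 1$ counted with multiplicity). In this case, I would expand $Q(z) - Q(z_0) = \sum_{n \geq 1} a_n (z - z_0)^n$ on $D(z_0, r)$, perform a Weierstrass-preparation factorization on a suitable subdisk as $W(z) \cdot U(z)$, where $W$ is a polynomial whose zeros are the preimages of $Q(z_0)$ in the subdisk and $U$ is a nonvanishing analytic unit, and then invoke the quantitative core of \cref{t:critically-mapped} on successively larger concentric disks, peeling off one repeated preimage of $Q(z_0)$ at each step. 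Each peel costs a factor of $\lambda(d)^{-1}$ in the working radius, and at most $d - 2$ such steps are needed before the extracted critical point $c$ satisfies $Q(c) \neq Q(z_0)$. The mean-value estimate on $D(z_0, \lambda(d)^{-(d-2)} r)$ then yields $|Q(c) - Q(z_0)| \leq \lambda(d)^{-(d-1)} |Q'(z_0)| r$.

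The principal obstacle will be ensuring that the iteration genuinely terminates with a critical value distinct from $Q(z_0)$ rather than cycling through further repeated preimages, and tracking the accumulated multiplicative losses sharply enough to obtain the exponent $-(d-1)$ rather than a weaker one like $-d$. I expect this to require coordinated Newton-polygon bookkeeping for both $Q(z) - Q(z_0)$ and $Q'(z)$ on concentric disks, leveraging the structural identity $\sum_{\zeta \in Q^{-1}(Q(z_0)) \cap K} m_\zeta = d$ that relates the multiplicities of finite preimages to the degree, and a careful case analysis according to whether the critical points already present on the initial disk suffice or a strictly larger disk must be considered.
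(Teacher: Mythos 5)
Your opening Riemann--Hurwitz count correctly establishes the \emph{existence} of a finite critical value distinct from $Q(z_0)$, but that is never the difficulty in this theorem; the entire content is the quantitative \emph{location} of such a $v$ with the stated exponent. Your fallback for the case $v_0 = Q(z_0)$ is where the argument breaks. When \cref{t:critically-mapped} returns a critical value equal to $Q(z_0)$, it has produced a critical point lying in the fiber over $Q(z_0)$, and the bound $|v_0 - Q(z_0)| \le \lambda(d)^{-1}|Q'(z_0)|r$ then carries no information at all. At that point you propose to ``Weierstrass-factor $Q - Q(z_0)$, peel off a repeated preimage, and reapply the quantitative core on successively larger disks,'' but no mechanism is given by which this produces a new critical point of the \emph{original} map $Q$: dividing out a factor $W$ from $Q - Q(z_0)$ changes the function, and the critical points of $W\cdot U$ are governed by $W'U + WU'$, which has no clean relation to critical points of the ``peeled'' quotient. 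There is also no control on why the relevant preimages would be spread across nested disks of radii growing by exactly $\lambda(d)^{-1}$ per step, nor why the process would terminate after $d-2$ steps with a critical value not equal to $Q(z_0)$. You flag precisely this as the ``principal obstacle'' but do not resolve it, and I do not see how to resolve it along these lines.

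For comparison, the paper does not iterate $\cref{t:critically-mapped}$. It proves a refined form (\cref{t:strictly-critically-mapped'}) directly, after normalizing $z_0 = 0$, $Q(0) = 0$. The cases are organized by whether the relevant preimage component $X_0$ is a ball. When it is, \cref{l:critically-holomorphic'} (a power-series/Newton-polygon statement built to exclude zeros of $Q$ as candidate critical points) gives the result. When $X_0$ is not a ball, a M{\"o}bius coordinate change sends the picture to $\infty$, and the argument runs through the distorted log-size $G_{d-1}$ of~\cref{ss:distorded-log-size} with distortion parameter exactly $d - 1$; the counting via \cref{l:distorded-log-size} and the residue-type identity~\eqref{eq:76} then produce a critical point that is provably not a zero of $Q$. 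The exponent $d - 1$ in~\eqref{eq:11} comes out directly from the choice of distortion parameter, not from compounding weaker bounds. In short, the quantitative engine is a one-shot Newton-polygon/affinoid computation at the right scale, and there is no iterative scheme of the kind you sketch that reproduces it.
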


\subsection{Notes and references}
\label{ss:notes-references}
In the complex setting, the result of \textsc{Fatou} and \textsc{Julia} on critical points in immediate basins implies that a rational map of degree~$d$ at least two has at most~${2d - 2}$ attracting cycles.\footnote{\textsc{Shishikura} \cite[Corollary~1]{Shi87} proved that the number non-repelling cycles is also bounded by~${2d - 2}$. Even when restricted to indifferent cycles, the analogous statement fails in the non-\textsc{Archimedean} setting: The number of indifferent cycles is either zero or infinity if ${p > 0}$ \cite[\emph{Corollaire}~5.17]{Riv03c}.}
In the non-\textsc{Archimedean} setting, the analogous statement fails because rational maps often have infinitely many attracting cycles if ${p > 0}$.
However, a weaker statement holds: If the number of attracting cycles is finite, then it is at most~${3d - 3}$ \cite[\emph{Corollaire}~4.9]{Riv03c}.
A positive solution to the following conjecture would yield the optimal upper bound of ${2d - 2}$.
See~\cref{s:critically-attracted} for the definition of immediate basin (of \textsc{Cantor} type).
A rational map can have at most ${d - 1}$ attracting cycles whose immediate basins are of \textsc{Cantor} type \cite[\emph{Proposition}~4.8]{Riv03c}.
No immediate basin of a polynomial is of \textsc{Cantor} type, except, perhaps, for that of~$\infty$.

\begin{conj}[\textcolor{black}{\cite[p.~197]{Riv03c}}]
  Let~$R$ be a rational map of degree at least two with coefficients in~$K$.
  Suppose that~$R$ has an attracting cycle whose immediate basin is of \textsc{Cantor} type and contains no critical point of~$R$.
  Then, $R$ has infinitely many attracting cycles.
\end{conj}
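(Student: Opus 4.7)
The strategy is to use the rich symbolic dynamics forced by a Cantor-type immediate basin that contains no critical points, together with Theorems~\ref{t:critically-attracted} and~\ref{t:critically-mapped}, to produce infinitely many attracting cycles of $R$ distinct from the given cycle $\cO$. Write $\cU$ for the immediate basin of $\cO$ and set $k \= \#\cO$. By the classification of attracting basins in \cite{Riv03c}, the Cantor-type hypothesis means that the reduction of $R^k$ on each immediate-basin component of $\cU$ has at least two indeterminacy classes; combined with the absence of critical points in $\cU$, this forces each inverse branch of $R^k$ on $\cU$ to be \'etale. Consequently, $R^k$ restricted to $\cU$ is topologically conjugate to a one-sided full shift on an alphabet $\cA$ of size at least two, and for every word $w \in \cA^n$ there is a univalent, strictly contracting inverse branch of $R^{kn}$ mapping a fixed reference disk into a small sub-disk $D_w \subset \cU$.

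The main step is to apply \cref{c:critically-mapped} to the iterate $R^{kn}$ at base points just outside each $D_w$ and close to $\partial \cU$. This produces, for each $w$, a critical value of $R^{kn}$ at scale comparable to $\lambda(d)^{-kn}$ relative to the distance from $D_w$ to $\cO$. Since no critical point of $R$ lies in $\cU$, any critical point of $R^{kn}$ projecting to these critical values must itself lie outside $\cU$, and its forward orbit under $R$ must enter $\cU$ at some step before $kn$. Each such entering event identifies a Berkovich disk outside $\cU$ which is mapped by some iterate $R^{km}$ into $\cU$ in a way that, by the contraction mapping principle applied to a suitable restriction of $R^{km}$, contains a unique attracting periodic point of $R^{km}$, hence an attracting cycle of $R$ distinct from $\cO$.

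The final and hardest step is to localize these constructions to pairwise disjoint cylinders of the shift $\cA^{\N}$ in order to produce pairwise distinct attracting cycles. The obstruction is that, in principle, many different words $w$ could give rise to the same attracting cycle --- for instance, if the critical orbits of $R$ are few and eventually periodic. Here one must exploit a quantitative refinement of \cref{t:critically-attracted}, combined with a pigeonhole argument over the at most $2d-2$ critical points of $R$: the scales $\lambda(d)^{-kn}$ at depth $n$ differ geometrically from those at depth $n' \neq n$, so the attracting cycles produced at different depths cannot coincide. Making this quantitative separation fully rigorous in the Berkovich framework, and in particular verifying that the critical orbits of $R$ realize infinitely many genuinely distinct ``entry patterns'' into $\cU$, is where I expect the essential difficulty of this conjecture to lie, and is the reason the statement has remained open.
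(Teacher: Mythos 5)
This statement is a \emph{conjecture}, not a theorem of the paper: the paper attributes it to \cite[p.~197]{Riv03c}, offers no proof, and only remarks that it would suffice to show that every Cantor-type immediate basin without critical points has an inseparable periodic point in its boundary. So there is no ``paper proof'' to compare against, and your proposal must stand on its own as a claimed resolution of an open problem. It does not: you yourself concede in the final paragraph that the decisive separation step is ``where I expect the essential difficulty of this conjecture to lie,'' which is an admission that the argument is a heuristic sketch rather than a proof.

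Beyond the acknowledged gap, several intermediate steps are incorrect. First, $R^{\#\cO}$ restricted to the immediate basin $\cU$ cannot be conjugate to a full shift: every point of $\cU$ is, by definition, attracted to $\cO$, so the dynamics on $\cU$ is eventually trivial; the shift-like structure, if any, lives on the boundary Cantor set or on the complement, and the absence of critical points in $\cU$ does not rule out inseparability (wild ramification at Berkovich points), which is exactly the mechanism the paper's remark about inseparable boundary periodic points is addressing. Second, the ``entering event'' step is backwards: if a disk $D$ disjoint from $\cU$ satisfies $R^{km}(D) \subseteq \cU$, then $D$ lies in the full basin of attraction of $\cO$, so any periodic point of $D$ would have to belong to $\cO$ itself --- the contraction mapping principle produces a fixed point only when a disk is mapped into \emph{itself}, and your construction never exhibits such a disk. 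Third, \cref{c:critically-mapped} locates critical values of a single map near a base point; it gives no mechanism for manufacturing periodic orbits, and applying it to $R^{kn}$ produces critical points of the iterate whose orbits there is no reason to expect to enter $\cU$ at all. The conjecture remains open, and this proposal does not close it.
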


To solve this conjecture affirmatively, it would be sufficient to prove that every immediate basin of \textsc{Cantor} type containing no critical point has an inseparable periodic point in its boundary \cite[\emph{Lemme}~4.1]{Riv05b}.

For each rational map~$R$ of degree~$d$ at least two with coefficients in~$K$, \textsc{Gauthier}, \textsc{Okuyama}, and \textsc{Vigny} \cite[Theorem~A]{GauOkuVig20} approximated the \textsc{Lyapunov} exponent of~$R$ in terms of the \textsc{Lyapunov} exponents of periodic measures of a given period~$n$ that are not too negative.
For every~$r$ in the interval~$]0, \lambda(d)^{d^n}]$, their estimate holds with the cutoff~$\frac{1}{n} \log r$.
\cref{c:critical-bound} implies that \cite[Theorem~A]{GauOkuVig20} holds for every~$r$ in the larger interval~$]0, \lambda(d)^n]$.
\cref{c:critical-bound'}, a refined version \cref{c:critical-bound}, leads to a further sharpening.

\textsc{Faber} \cite[Applications~1 and~2]{Fab13b} established variants of Theorems~\ref{t:critically-mapped} and~\ref{t:strictly-critically-mapped} locating a critical point instead of a critical value.

\subsection{Strategy and organization}
\label{ss:organization}
After some preliminaries in~\cref{s:preliminaries}, \cref{s:critically-mapped} states and proves Theorems~\ref{t:critically-mapped'} and~\ref{t:strictly-critically-mapped'}, which are refined versions of Theorems~\ref{t:critically-mapped} and~\ref{t:strictly-critically-mapped}, and a variant of these results implicit in \cite{BenIngJonLev14} (\cref{t:strictly-critically-mapped''}).
The proofs of Theorems~\ref{t:critically-mapped'} and~\ref{t:strictly-critically-mapped'} are both divided into two cases, according to whether the poles of~$Q$ are large or small.
In the former case, the results reduce to statements locating critical points in the absence of poles in~\cref{ss:critically-holomorphic}.
The proofs in the latter case parallel that of \cite[Theorem~4.1]{BenIngJonLev14}.

\cref{s:critically-attracted} states and proves Theorems~\ref{t:critically-attracted'} and~\ref{t:infinitely-attracted'}, which are refined version of Theorems~\ref{t:critically-attracted} and~\ref{t:infinitely-attracted}.
Roughly speaking, the proof of \cref{t:critically-attracted'} identifies a point~$z_0$ of the attracting cycle~$\cO$ at which~$R$ is most contracting relative to the inner radii of the immediate basin of~$\cO$ at~$z_0$ and~$R(z_0)$.
After a suitable coordinate change, \cref{t:critically-mapped'} locates the desired critical point.
The proof of~\cref{t:infinitely-attracted'} relies on \cref{t:critically-mapped'} to choose a point~$z_0$ of~$\cO$, for which the following holds for every~$n$ in~$\Z_{> 0}$: There is~$j_n$ in~$\{0, \ldots, n - 1\}$ and a critical value~$v_n$ of~$R$ in~$\PK$ that is close to, but distinct from, $R^{j_n + 1}(z_0)$ and such that~$R^{n - j_n - 1}$ is univalent on the smallest ball contained in the immediate basin of~$\cO$ containing~$R^{j_n + 1}(z_0)$ and~$v_n$.
The proof in the base case ${n = 1}$ follows from \cref{t:strictly-critically-mapped'} and \cref{l:critically-holomorphic'}, while that of the inductive step relies on \cref{t:strictly-critically-mapped''} in~\cref{s:critically-mapped} and a general lemma (\cref{l:injectivity-radius}).
\cref{t:infinitely-attracted'} then follows from the finiteness of the set of critical points of~$R$ in~$\PK$.
The proofs of Theorems~\ref{t:critically-attracted'} and~\ref{t:infinitely-attracted'} occupy~\S\S\ref{ss:proof-critically-attracted'} and~\ref{ss:proof-infinitely-attracted'}, respectively.

\cref{s:examples} provides examples showing that Theorems~\ref{t:critically-attracted} and~\ref{t:critically-mapped}, and \cref{c:critically-mapped} are sharp.

\section{Preliminaries}
\label{s:preliminaries}
Denote by ${\diam \: \HK \to \R_{\ge 0} \cup \{+ \infty\}}$ the affine diameter function.
If ${I \: \PK \to \PK}$ is the involution defined in affine coordinates by ${I(z) \= 1/z}$, then~$\diam$ coincides with~$\diam$ on ${\{x \in \PKber \: |x| \le 1\}}$ and with~$1/\diam \circ I$ on ${\{x \in \PKber \: |x| > 1\}}$.

Let~$R$ be a nonconstant rational map with coefficients in~$K$.
For each connected subset~$A'$ of~$\PKber$ and every connected component~$A$ of~$R^{-1}(A')$, the function
\begin{equation}
  \label{eq:12}
  y \mapsto \sum_{x \in A \cap R^{-1}(y)} \deg_R(x)
\end{equation}
is constant \cite[\emph{Lemme}~1.1]{0FavRiv25}.
Denote by~$\deg_R(A)$ the value of this function.

The \emph{maximal inseparability degree~$\wmax(R)$ of~$R$}, is
\begin{equation}
  \label{eq:13}
  \wmax(R)
  \=
  \max \{ \wdeg{R}(x) \: x \in \HK \}.
\end{equation}
Note that~$\wmax(R)$ is a power of~$p$ satisfying ${\wmax(R) \le \deg(R)}$.

Let ${\wf_R \: \HK \to \R}$ be defined by
\begin{equation}
  \label{eq:14}
  \wf_R
  \=
  -\log (\| R' \times \| \diam / \diam \circ R).
\end{equation}
For every nonconstant rational map~$Q$ with coefficients in~$K$, the equality ${\wf_{Q \circ R} = \wf_Q \circ R + \wf_R}$ holds.
On the other hand, for every \textsc{M{\"o}bius} transformation~$\varphi$ the function~$\wf_\varphi$ is constant equal to~$0$ and ${\wf_{\varphi \circ R \circ \varphi^{-1}} = \wf_R \circ \varphi^{-1}}$.
By~\cite[\emph{Proposition}~3.4]{0FavRiv25}, ${|\wmax(R)| > 0}$ implies
\begin{equation}
  \label{eq:15}
  0
  \le
  \wf_R
  \le
  -\log|\wdeg{R}|.
\end{equation}

\subsection{Distorted log-size}
\label{ss:distorded-log-size}
This section reviews basic properties of a one-parameter family of functions that play an important r{\^o}le in the proofs of \cref{l:critically-holomorphic'} and Theorems~\ref{t:critically-mapped'} and~\ref{t:strictly-critically-mapped'} in~\cref{s:critically-mapped}.
These functions are variants of a function introduced in the proof of \cite[Theorem~4.1]{BenIngJonLev14}.
For each~$x$ in~$\HK$ of type~II or~III, denote by~$B(x)$ the ball of~$K$ associated with it.

Let~$R$ be a nonconstant rational map with coefficients in~$K$.
Given a subset~$W$ of~$\PK$, denote by~$Z(R, W)$ and~$P(R, W)$ the number of zeros and poles of~$R$ in~$W$, respectively, counted with multiplicity.
Given~$d$ in~$\R$, let ${G_d \: \HK \to \R}$ be the function defined by
\begin{equation}
  \label{eq:16}
  G_d(x)
  \=
  \log \diam(R(x)) + d \wf_R(x).
\end{equation}
For each~$x$ in~$\HK$ of type~II or~III, define~$\partial G_d(x)$ as follows.
Fix~$z$ in~$B(x)$ and for each~$t$ in~$\R$ denote by~$x_0(t)$ the point of~$\PKber$ associated with ${\{ z' \in K \: |z' - z| \le \exp(t) \}}$.
Then, $\partial G_d(x)$ is the right derivative of~$G_d \circ x_0$ at ${t = \log \diam(x)}$.
Since the restriction of~$x_0$ to~$[\log \diam(x), +\infty[$ is independent of the choice of~$z$, so is~$\partial G_d(x)$.

\begin{lemm}
  \label{l:distorded-log-size}
  For all~$d$ in~$\R$ and~$x$ in~$\HK$ of type~II or~III satisfying ${R([x, \infty[) \subseteq ]0, \infty[}$,
  \begin{equation}
    \label{eq:17}
    \partial G_d(x)
    =
    (d + 1)(Z(R, B(x)) - P(R, B(x))) - d(Z(R', B(x)) - P(R', B(x)) + 1).
  \end{equation}
\end{lemm}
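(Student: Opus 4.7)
The plan is to decompose $G_d$ into three standard pieces whose right derivatives along the arc $[x,\infty[$ can be evaluated separately. Unpacking the defining identity $\wf_R = -\log(\|R'\|\diam/\diam\circ R)$ as
\[
\wf_R = -\log\|R'\| - \log\diam + \log(\diam\circ R),
\]
and substituting into $G_d = \log(\diam\circ R) + d\wf_R$ yields
\[
G_d = (d+1)\log(\diam\circ R) - d\log\|R'\| - d\log\diam.
\]
By linearity of $\partial$ at $x$, the computation of $\partial G_d(x)$ reduces to three slope computations along $[x,\infty[$.

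The trivial summand is $\partial\log\diam(x) = 1$, since $\log\diam(x_0(t)) = t$ for $t$ near $\log\diam(x)$ from above. For $\partial\log(\diam\circ R)(x)$, the hypothesis $R([x,\infty[) \subseteq ]0,\infty[$ ensures that, as $t$ grows past $\log\diam(x)$, the ball $B(x_0(t))$ is mapped by $R$ to a Berkovich disk staying on the axis and disjoint from $0$ and $\infty$; a standard Newton-polygon argument then identifies this slope with the signed count $Z(R, B(x)) - P(R, B(x))$. The analogous slope formula applied to the rational function $R'$, combined with the spherical normalization of $\|R'\|$ along the axis, gives $\partial\log\|R'\|(x) = Z(R', B(x)) - P(R', B(x))$.

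Substituting the three slopes into the expansion of $\partial G_d(x)$ produces
\[
\partial G_d(x) = (d+1)(Z(R, B(x)) - P(R, B(x))) - d(Z(R', B(x)) - P(R', B(x))) - d,
\]
which is exactly the identity in the statement after absorbing the constant $-d$ inside the second parenthesis. The main technical obstacle is the slope formula $\partial\log\|R'\|(x) = Z(R', B(x)) - P(R', B(x))$: since $\|R'\|$ is the spherical derivative, it differs from $|R'|$ by a factor of the form $\max(1,|\cdot|)^2/\max(1,|R|)^2$, and one must check that along the arc $[x,\infty[$ this correction contributes no net right slope at $x$—either because we are in the affine regime $|x_0(t)|\le 1$ where the correction is locally constant, or because in the non-affine regime its slope cancels against slopes already accounted for in the $\log\diam$ and $\log(\diam\circ R)$ summands in the expansion of $G_d$. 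Once this verification is in hand, the three slope computations combine mechanically to yield the stated formula.
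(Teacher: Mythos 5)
Your decomposition
\[
G_d = (d+1)\log(\diam\circ R) - d\log\|R'\| - d\log\diam
\]
is the same starting point as the paper's proof, and the plan of reducing the lemma to three \textsc{Newton}-polygon slope computations along $[x,\infty[$ is morally identical. The treatment of the first summand is also right: the hypothesis $R([x,\infty[)\subseteq\, ]0,\infty[$ places the image arc on the segment where $\diam$ agrees with $|\cdot|$, so $\diam\circ R\circ x_0 = |R\circ x_0|$ on $[\log\diam(x),+\infty[$, and the \textsc{Newton} polygon of $R$ then gives the slope $Z(R,B(x)) - P(R,B(x))$.

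The gap is in the other two slopes, and you flag it without closing it. First, $\partial\log\diam(x)=1$ is not the trivial fact you claim: the $\diam$ appearing in the definition of $\wf_R$ is not the affine diameter away from the unit ball, so $\log\diam(x_0(t)) = t$ fails whenever $x$ sits off the segment $]0,\infty[$ with $|x|>1$ (there the right slope is $-1$, not $+1$), and nothing in the hypothesis forces $x$ onto that segment. Second, as you note, $\partial\log\|R'\|(x) = Z(R',B(x)) - P(R',B(x))$ requires an argument, and neither of your two proposed escape routes holds up: the ``affine regime'' $|x_0(t)|\le 1$ would additionally need $|R(x_0(t))|\le 1$, which the hypothesis does not supply, and invoking a cancellation ``against slopes already accounted for'' in $\log\diam$ and $\log(\diam\circ R)$ is asserting the conclusion rather than proving it. What kills both problems at once is the single observation the paper compresses into ``combined with the definition of $\wf_R$'': because $\wf_R$ is \textsc{M{\"o}bius}-invariant, the spherical normalizations in $\|R'\|$ and $\diam$ cancel identically, so $\|R'\|\diam$ equals $|R'|$ times the affine diameter; equivalently, $\wf_R$ at $x_0(t)$ is $\log|R(x_0(t))| - \log|R'(x_0(t))| - t$ once $\diam\circ R\circ x_0$ is replaced by $|R\circ x_0|$. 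Substituting into $G_d = \log(\diam\circ R) + d\,\wf_R$ gives
\[
G_d(x_0(t)) = (d+1)\log|R(x_0(t))| - d\log|R'(x_0(t))| - dt,
\]
and then the two \textsc{Newton} polygons finish the job. Work at the level of this affine identity rather than assigning separate slopes to $\log\|R'\|$ and $\log\diam$; those individual slope formulas are not correct in general, only their combination is.
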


\begin{proof}
  Let~$z$ and ${x_0 \: \R \to ]z, \infty[}$ be as above and put ${t_0 \= \log \diam(x)}$.
  The hypothesis ${R([x, \infty[) \subseteq ]0, \infty[}$ implies
  \begin{equation}
    \label{eq:18}
    \diam \circ R \circ x_0
    =
    |R \circ x_0|
  \end{equation}
  on ${[t_0, +\infty[}$.
  Combined with the definition of~$\wf_R$, this implies that, for every~$t$ in ${[t_0, +\infty[}$,
  \begin{equation}
    \label{eq:19}
    \partial G_d(x)(x_0(t))
    =
    (d + 1)\log|R(x_0(t))| - d\log|R'(x_0(t))| - dt.
  \end{equation}
  The desired assertion follows from an analysis of the \textsc{Newton} polygons of~$R$ and~$R'$.
\end{proof}

\subsection{Critical points in the absence of poles}
\label{ss:critically-holomorphic}

This section gathers results locating critical points of rational maps in regions without poles.
Several arguments in \S\S\ref{s:critically-mapped} and~\ref{s:critically-attracted} rely on the following lemma.

\begin{lemm}
  \label{l:critically-holomorphic}
  Let~$Q$ be a nonconstant rational map with coefficients in~$K$, and let~$r$ and~$r'$ in~$\R_{> 0}$ and~$d$ in~$\Z_{> 0}$ be such that~$Q$ maps~$\{ z \in K \: |z| < r \}$ onto~$\{ z \in K \: |z| < r' \}$ with degree~$d$.
  Then, ${|Q'(0)| < |d| r'/r}$ implies that~$Q$ has a critical point in~$\{ z \in K \: |z| < r \}$.
\end{lemm}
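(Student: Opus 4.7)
The plan is to argue by contradiction via a Newton polygon analysis of $Q$ and $Q'$. Since $Q$ sends ${\{z \in K : |z| < r\}}$ into ${\{|w| < r'\} \subseteq K}$, it has no pole on this open ball and admits a convergent power series representation ${Q(z) = \sum_{n \geq 0} a_n z^n}$ there. By the ultrametric maximum principle, ${M(s) := \sup_{|z| \leq s} |Q(z)| = \max_n |a_n| s^n}$ for every ${s \in (0, r)}$.

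First I would translate the surjectivity-of-degree-$d$ hypothesis into precise bounds on the coefficients. Set ${N(s) := \max\{n : |a_n| s^n = M(s)\}}$. The standard correspondence identifying ${N(s)}$ with the number of preimages in ${\{|z| \leq s\}}$ of a generic point of ${\{|w| \leq M(s)\}}$ shows that, for $s$ sufficiently close to $r$ from below, ${N(s) = d}$. Hence ${M(s) = |a_d| s^d}$ and ${|a_n| s^n \leq |a_d| s^d}$ for every $n$. Letting ${s \to r^{-}}$ and using that ${M(s) \to r'}$ (forced by surjectivity onto the entire open target ball) yields ${|a_d| r^d = r'}$, and therefore ${|d a_d| r^{d - 1} = |d| r'/r}$.

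Now suppose for contradiction that $Q'$ has no zero in ${\{|z| < r\}}$. Then ${Q'(0) = a_1 \neq 0}$, for otherwise the origin itself would be a critical point. Since ${Q'(z) = \sum_{n \geq 1} n a_n z^{n-1}}$ is analytic on the open ball of radius $r$, the absence of zeros there forces every slope of its Newton polygon, which begins at the vertex ${(0, -\log|a_1|)}$, to be at least ${\log r}$; equivalently, ${|n a_n| r^{n - 1} \leq |a_1|}$ for every ${n \geq 1}$. Specialising to ${n = d}$ gives ${|d| r'/r \leq |Q'(0)|}$, contradicting the hypothesis ${|Q'(0)| < |d| r'/r}$. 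The only mildly delicate step is extracting the precise equality ${|a_d| r^d = r'}$ from the surjectivity hypothesis (a lower bound suffices, but the equality is cleaner and standard); this is a routine fact about analytic self-maps of ultrametric discs, so I do not anticipate any real obstacle.
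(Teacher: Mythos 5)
Your proof is correct and follows essentially the same route as the paper's: expand $Q$ as a power series on the open ball, use the degree-$d$ surjectivity to pin down $|a_d| r^d = r'$, and then exploit the absence of zeros of $Q'$ via its Newton polygon to get $|n a_n| r^{n-1} \le |a_1|$, with $n = d$ giving the contradiction. You spell out the step $|a_d| r^d = r'$ a bit more explicitly than the paper (which leaves it to ``Analyzing the Newton polygon''), but the underlying mechanism is identical.
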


\begin{proof}
  Suppose~$Q$ has no critical point in~$\{ z \in K \: |z| < r \}$ and let~$\sum_{n = 0}^{+\infty} a_n z^n$ be the power series expansion of~$Q$ on this set.
  Then, for every~$n$ in~$\Z_{\ge 0}$, the inequality ${|n a_n| r^n \le r'}$ holds, with equality for ${n = d}$.
  Analyzing the \textsc{Newton} polygon of~$Q'$ yields that, for every~$n$ in~$\Z_{> 0}$, the inequality ${|n a_n| r^{n - 1} \le |a_1|}$ holds.
  Taking ${n = d}$, yields
  \begin{equation}
    \label{eq:20}
    |Q'(0)|
    =
    |a_1|
    \ge
    |d a_d| r^{d - 1}
    =
    |d| r'/r.
    \qedhere
  \end{equation}
\end{proof}

The proofs of \cref{t:strictly-critically-mapped'} in~\cref{s:critically-mapped} and \cref{t:infinitely-attracted'} in~\cref{s:critically-attracted}, rely on the following lemma.

\begin{lemm}
  \label{l:critically-holomorphic'}
  Let~$Q$ be a rational map with coefficients in~$K$ such that ${Q'(0) \neq 0}$, let~$r$ and~$r'$ in~$\R_{> 0}$ be such that~$Q$ maps ${\{z \in K \: |z| < r \}}$ onto~$\{ z \in K \: |z| < r' \}$, and put ${d \= \deg_Q(\{z \in K \: |z| < r \})}$.
  Suppose either
  \begin{align}
    \label{eq:21}
    |Q'(0)|
    & <
      \exp(-(d - 1)\wf_Q(x(r))) r'/r,
      \intertext{or, denoting by~$r_{\bullet}$ the largest number in~$]0, r]$ such that~$Q$ is univalent on ${\{z \in K \: |z| < r_{\bullet}\}}$,}
      \label{eq:22}
      |Q'(0)|
    & <
      \exp(-d\wf_Q(x(r))) r'/r_{\bullet}.
  \end{align}
  Then, $Q$ has a critical point in~$\{ z \in K \: |z| < r \}$ that is not a zero.
\end{lemm}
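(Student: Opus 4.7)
My plan is to argue by contrapositive: assume that $Q$ has no critical point in $\{|z| < r\}$ distinct from $0$, which, since $Q'(0) \neq 0$, amounts to having no critical point in $\{|z| < r\}$ at all, and derive that neither \eqref{eq:21} nor \eqref{eq:22} can hold. Writing the power series expansion $Q(z) = \sum_{n \ge 0} a_n z^n$ on $\{|z| < r\}$, the Newton-polygon analysis in the proof of \cref{l:critically-holomorphic} yields two basic inequalities: (i) $|a_n| r^n \le r'$ for every $n \ge 0$, with equality at $n = d$; and (ii) $|n a_n| r^{n-1} \le |a_1|$ for every $n \ge 1$. Inequality (ii) identifies the Gauss norm of $Q'$ at $x(r)$ with $|a_1|$, so by \eqref{eq:14} together with $\diam(Q(x(r))) = r'$ one gets $\exp(-\wf_Q(x(r))) = r|a_1|/r'$.

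For \eqref{eq:21}, its right-hand side equals $(r|a_1|/r')^{d-1} r'/r$, and the desired lower bound $|a_1| \ge (r|a_1|/r')^{d-1} r'/r$ simplifies to $(r|a_1|/r')^{d-2} \le 1$, which follows from $|a_1| r \le r'$, the $n = 1$ case of (i), with the case $d = 2$ being automatic. This contradicts \eqref{eq:21}.

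For \eqref{eq:22}, the case $d = 1$ is vacuous: then $Q$ is univalent on $\{|z| < r\}$, so $r_\bullet = r$, $\wf_Q(x(r)) = 0$, and the right-hand side collapses to $|Q'(0)|$. Assuming $d \ge 2$, and hence $r_\bullet < r$, I pick an index $n^* \ge 2$ such that $|a_{n^*}| r_\bullet^{n^*-1} = |a_1|$; such $n^*$ exists because $r_\bullet$ is characterized as the largest $s \in \,]0, r]$ with $|a_n| s^{n-1} \le |a_1|$ for all $n \ge 2$. Substituting $|a_{n^*}| = |a_1|/r_\bullet^{n^*-1}$ into (i) at $n = n^*$ yields $r_\bullet \ge r(r|a_1|/r')^{1/(n^*-1)}$, and since $r|a_1|/r' \le 1$ and $(n^*-1)(d-1) \ge 1$, this strengthens to $r_\bullet \ge r(r|a_1|/r')^{d-1}$. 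Rearranging gives $|a_1| \ge (r|a_1|/r')^d r'/r_\bullet$, contradicting \eqref{eq:22}. The main subtlety I foresee is this exponent-strengthening step, which combines both Newton-polygon inequalities with the elementary inequality $(n^*-1)(d-1) \ge 1$ for $d, n^* \ge 2$ applied to the number $r|a_1|/r' \in \,]0, 1]$.
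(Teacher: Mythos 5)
Your argument has a fatal gap that stems from misreading the conclusion. The lemma asserts that $Q$ has a critical point that \emph{is not a zero of~$Q$} (i.e.\ a critical point $c$ with $Q(c)\neq 0$); you have read ``zero'' as ``the origin,'' whose negation, given $Q'(0)\neq 0$, does collapse to ``no critical point at all.'' But the correct contrapositive assumption is that \emph{every} critical point of~$Q$ in $\{|z|<r\}$ is a zero of~$Q$ --- which is weaker than having none, since it allows multiple zeros of~$Q$ inside the disc. Under that correct hypothesis your Newton-polygon inequality (ii), $|na_n|r^{n-1}\le|a_1|$, simply fails: those multiple zeros are zeros of~$Q'$ of positive norm less than~$r$, and they force $\max_n|na_n|r^{n-1}>|a_1|$. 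So the argument proves the weaker assertion ``$Q$ has a critical point in $\{|z|<r\}$'' but not the stated one, and this is exactly the distinction the paper's use of this lemma depends on (e.g.\ in the proof of Theorem~B$'$, where the nonvanishing $|R(c_0)-z_1|>0$ is what is needed).

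The reason the paper's own proof goes through the distorted log-size function $G_{-d}$ and the residue-type identity \eqref{eq:31} is precisely to isolate the count $Z_{\S}$ of critical points that are \emph{not} zeros of~$Q$ from the contribution of critical points supported on the multiple zeros, and to show $Z_{\S}\ge 1$. This bookkeeping is not avoidable by a direct coefficient bound of the kind you use: there are genuine maps, with $Q(0)=0$, $d\ge 2$, and all critical points in the disc located at multiple zeros of~$Q$ (e.g.\ $z(z-a)^2$ over~$\C_3$ with $|3|r\le|a|<r$), for which \eqref{eq:21} and \eqref{eq:22} hold \emph{with equality} --- so the desired strict inequalities you are trying to refute sit exactly on this borderline, and ruling out ``all critical points are zeros'' requires an argument that sees the multiplicities at the zeros, not just the Gauss norm of~$Q'$. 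If you want to salvage the approach, you would need to reintroduce that multiplicity bookkeeping, at which point you have essentially reconstructed the paper's argument.
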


Under the hypothesis~\eqref{eq:22}, this lemma follows from the proof of \cite[Theorem~5.1]{BenIngJonLev14}.
For completeness, a proof follows.

\begin{proof}[Proof of \cref{l:critically-holomorphic'}]
  For each~$\varrho$ in~$\R_{> 0}$, denote by~$x(\varrho)$ the point of~$\PKber$ associated with ${\{z \in K \: |z| \le \varrho\}}$.
  As in~\cref{ss:distorded-log-size}, for each~$x$ in~$\HK$ of type~II or~III, denote by~$B(x)$ the ball of~$\PK$ associated with it.

  Note that
  \begin{equation}
    \label{eq:23}
    \wdeg{Q}(x(r_{\bullet}))
    =
    1,
    \wf_Q(x(r_{\bullet}))
    =
    0,
    \text{ and }
    |Q'(0)| r_{\bullet}
    =
    \diam(Q(x(r_{\bullet})))
    \ge
    \left( \frac{r_{\bullet}}{r} \right)^d r'.
  \end{equation}
  Hence,
  \begin{equation}
    \label{eq:24}
    \left( \frac{|Q'(0)| r_{\bullet}}{r'} \right)^{d - 1}
    \le
    \left( \frac{|Q'(0)| r}{r'} \right)^d,
  \end{equation}
  and thus~\eqref{eq:21} implies~\eqref{eq:22}.

  Suppose~\eqref{eq:22} holds and let~$G_{-d}$ be the function defined in~\cref{ss:distorded-log-size} with ${R = Q}$ and~$d$ replaced by~$-d$.
  By~\eqref{eq:23},
  \begin{equation}
    \label{eq:25}
    G_{-d}(x(r_{\bullet}))
    =
    \log \diam(Q(x(r_{\bullet})))
    =
    |Q'(0)| r_{\bullet}
    <
    -d \wf_Q(x(r)) + \log r'
    =
    G_{-d}(x(r)).
  \end{equation}
  It follows that ${r_{\bullet} < r}$ and that there is~$r_{\ddag}$ in ${]r_{\bullet}, r[}$ such that ${\partial G_{-d}(x(r_{\ddag})) > 0}$.
  Put
  \begin{equation}
    \label{eq:26}
    B_{\ddag} \= \{z \in K \: |z| \le r_{\ddag}\}
    \text{ and }
    d_{\ddag}
    \=
    \deg_Q(B_{\ddag}).
  \end{equation}
  From the definition of~$r_{\bullet}$,
  \begin{equation}
    \label{eq:27}
    d
    \ge
    d_{\ddag}
    \ge
    2.
  \end{equation}
  On the other hand, by \cref{l:distorded-log-size} with~$d$ replaced by~$-d$,
  \begin{equation}
    \label{eq:28}
    0
    <
    G_{-d}(x(r_{\ddag}))
    =
    -(d - 1)(Z(Q, B_{\ddag}) - P(Q, B_{\ddag})) + d(Z(Q', B_{\ddag}) - P(Q', B_{\ddag}) + 1).
  \end{equation}
  In particular, $\partial G_{-d}(x(r_{\ddag}))$ is an integer and, thus, it is greater than or equal to~$1$.
  Since ${Z(Q, B_{\ddag}) = d_{\ddag}}$ and neither~$Q$ nor~$Q'$ has a pole in~$B_{\ddag}$,
  \begin{equation}
    \label{eq:29}
    d Z(Q', B_{\ddag})
    \ge
    (d - 1) (d_{\ddag} - 1).
  \end{equation}
  Denote by~$Z_{\S}$ the number of critical points of~$Q$ in~$B_{\ddag}$ that are not zeros of~$Q$, counted with multiplicity.
  The hypothesis ${|\wmax(Q)| > 0}$ implies that each zero~$z_0$ of~$Q$ satisfies
  \begin{equation}
    \label{eq:30}
    p
    \not\mid
    \deg_Q(z_0)
    \text{ and }
    \deg_Q(z_0)
    =
    \ord_{Q'}(z_0) + 1.
  \end{equation}
  Thus,
  \begin{equation}
    \label{eq:31}
    d_{\ddag}
    =
    \sum_{z_0 \in Q^{-1}(0) \cap B_{\ddag}} (\ord_{Q'}(z_0) + 1)
    =
    Z(Q', B_{\ddag}) - Z_{\S} + \# (Q^{-1}(0) \cap B_{\ddag}).
  \end{equation}
  Together with~\eqref{eq:27} and~\eqref{eq:29}, this implies
  \begin{equation}
    \label{eq:32}
    d Z_{\S}
    \ge
    d \# (Q^{-1}(0) \cap B_{\ddag}) - d_{\ddag} - (d - 1)
    \ge
    1 + d(\# (Q^{-1}(0) \cap B_{\ddag}) - 2).
  \end{equation}
  Since ${d_{\ddag} \ge 2}$ and~$0$ is a simple zero of~$Q$ because ${Q'(0) \neq 0}$,
  \begin{equation}
    \label{eq:33}
    \# (Q^{-1}(0) \cap B_{\ddag})
    \ge
    2,
    d Z_{\S}
    \ge
    1,
    \text{ and }
    Z_{\S}
    \ge
    1.
  \end{equation}
  Since ${B_{\ddag} \subseteq \{z \in K \: |z| < r\}}$, it follows that~$Q$ has a critical point in ${\{z \in K \: |z| < r\}}$ that is not a zero.
\end{proof}

\section{Locating critical values of rational functions}
\label{s:critically-mapped}

This section states and proves refined versions of Theorems~\ref{t:critically-mapped} and~\ref{t:strictly-critically-mapped} and \cref{c:critically-mapped} from~\cref{s:introdcution}, as well as a variant of these results implicit in \cite{BenIngJonLev14} (\cref{t:strictly-critically-mapped''}).
Recall from~\cref{s:preliminaries} that the maximal inseparability degree of a nonconstant rational map~$Q$ with coefficients in~$K$, is defined by
\begin{equation}
  \label{eq:34}
  \wmax(Q)
  =
  \max \{ \wdeg{Q}(x) \: x \in \HK \}.
\end{equation}

The proofs of Theorems~\ref{t:critically-attracted'} and~\ref{t:infinitely-attracted'}, stated in~\cref{s:critically-attracted}, rely on the following theorem.
Since ${\lambda(\deg(Q)) \le |\wmax(Q)|}$, \cref{t:critically-mapped} in~\cref{s:introdcution} is a direct consequence of the following theorem with~$Q(z)$ replaced by ${Q(z + z_0) - Q(z_0)}$.

\begin{custtheo}{C'}
  \label{t:critically-mapped'}
  Let~$Q$ be a rational map of degree at least two with coefficients in~$K$ that is not a polynomial and satisfies ${Q(\infty) = \infty}$, ${Q(0) = 0}$, ${Q'(0) \neq 0}$, and ${|\wmax(Q)| > 0}$.
  Furthermore, denote by~$r$ the smallest norm of a pole of~$Q$.
  Then, $Q$ has a critical value~$v$ in~$K$ satisfying
  \begin{equation}
    \label{eq:35}
    |v|
    \le
    |\wmax(Q)|^{-1} |Q'(0)| r.
  \end{equation}
  More precisely, ${Q}$ has a critical point in the component of
  \begin{equation}
    \label{eq:36}
    Q^{-1}(\{ z \in K \: |z| \le |\wmax(Q)|^{-1} |Q'(0)| r \})
  \end{equation}
  containing~$0$.
\end{custtheo}

The following corollary of \cref{t:critically-mapped'} provides an upper bound for~$|v|$ in terms of the smallest norm of a pole or zero of~$Q$ distinct from~$0$.
It follows from \cref{t:critically-mapped'} together with power series computations.
Since ${\lambda(\deg(Q)) \le |\wmax(Q)|}$, \cref{c:critically-mapped} is a direct consequence of the following corollary with~$Q(z)$ replaced by ${Q(z + z_0) - Q(z_0)}$.

\begin{coro}
  \label{c:critically-mapped'}
  Let~$Q$ be a rational map of degree at least two with coefficients in~$K$ satisfying ${Q(\infty) = \infty}$, ${Q(0) = 0}$, ${Q'(0) \neq 0}$, and ${|\wmax(Q)| > 0}$, and put
  \begin{equation}
    \label{eq:37}
    \gamma(Q)
    \=
    \begin{cases}
      1
      & \text{if } \wmax(Q) = 1;
      \\
      |p|^{-\frac{1}{p - 1}}
      & \text{if } \wmax(Q) > 1.
    \end{cases}
  \end{equation}
  Furthermore, denote by~$r_{\bullet}$ the smallest norm of a pole or a zero of~$Q$ distinct from~$0$.
  Then, ${Q}$ has a critical value~$v$ in~$K$ satisfying
  \begin{equation}
    \label{eq:38}
    |v|
    \le
    \gamma(Q) |\wmax(Q)|^{-1} |Q'(0)| r_{\bullet}.
  \end{equation}
\end{coro}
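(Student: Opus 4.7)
The plan is to split by whether the shortest obstruction $r_\bullet$ is attained at a pole or at a nonzero zero of $Q$, and to reduce to \cref{t:critically-mapped'} whenever possible. Set $M \colonequals |\wmax(Q)|$, $\gamma \colonequals \gamma(Q)$, and recall $\gamma \ge 1$. If $r_\bullet$ is attained at a pole of $Q$, then $r_\bullet = r$ and $Q$ is not a polynomial, so \cref{t:critically-mapped'} directly gives a critical value $v$ with $|v| \le M^{-1}|Q'(0)| r_\bullet \le \gamma M^{-1}|Q'(0)| r_\bullet$. If instead only a nonzero zero attains $r_\bullet$ (so $r_\bullet < r$) but $r \le \gamma r_\bullet$, then $r < +\infty$ again forces $Q$ non-polynomial, and \cref{t:critically-mapped'} yields $|v| \le M^{-1}|Q'(0)| r \le \gamma M^{-1}|Q'(0)| r_\bullet$.

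The remaining case is $\gamma r_\bullet < r$, which subsumes the polynomial case $r = +\infty$. Here $Q$ is analytic on the closed disk $\{z \in K \: |z| \le \gamma r_\bullet\}$, and I would proceed by Newton polygon analysis on the power series $Q(z) = \sum_{n \ge 1} a_n z^n$, with $a_1 = Q'(0) \ne 0$. Since the smallest-norm nonzero zero $z_1$ of $Q$ satisfies $|z_1| = r_\bullet$ and no pole of $Q$ has norm at most $\gamma r_\bullet$, the first segment of the Newton polygon of $Q$ starting at $(1, -\log|a_1|)$ has slope $\log r_\bullet$ and right endpoint at some integer abscissa $n_1 \ge 2$; consequently $|a_{n_1}| r_\bullet^{n_1 - 1} = |a_1|$ and $|a_n| r_\bullet^{n - 1} \le |a_1|$ for every $n \ge 1$. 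Inspecting the Newton polygon of $Q'(z) = \sum_{m \ge 0}(m + 1) a_{m + 1} z^m$ starting at $(0, -\log|a_1|)$: at abscissa $m = n_1 - 1$ the slope equals $\log r_\bullet - \log|n_1|/(n_1 - 1)$, and the elementary estimate $-\log|n_1|/(n_1 - 1) \le -\log|p|/(p - 1) = \log \gamma$, obtained by writing $n_1 = p^k m_0$ with $p \nmid m_0$ and noting that $k \mapsto k/(p^k - 1)$ is maximized at $k = 1$, shows the first slope of the Newton polygon of $Q'$ is at most $\log(\gamma r_\bullet)$. Therefore $Q$ has a critical point $c \in K$ with $|c| \le \gamma r_\bullet$.

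The final step is to bound $|Q(c)|$ by $\gamma M^{-1}|Q'(0)| r_\bullet$. Using $Q'(c) = 0$ to solve $a_1 = -\sum_{n \ge 2} n a_n c^{n - 1}$ and substituting into $Q(c) = a_1 c + \sum_{n \ge 2} a_n c^n$ yields the identity $Q(c) = -\sum_{n \ge 2}(n - 1) a_n c^n$. Combined with $|a_n| r_\bullet^{n - 1} \le |a_1|$ and $|c| \le \gamma r_\bullet$, this gives $|Q(c)| \le \gamma |Q'(0)| r_\bullet \cdot \max_{n \ge 2}|n - 1|\gamma^{n - 1}$. The main obstacle is closing the estimate $\max_{n \ge 2}|n - 1|\gamma^{n - 1} \le M^{-1}$: the crude Newton polygon bound is too loose for most $n$, so a refined power series estimate is needed. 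I expect closure by exploiting the later segments of the Newton polygon of $Q$ and the structure of $\wmax(Q)$ as a supremum over type~II points; specifically, the reduction of $Q$ at suitable Gauss points (notably that of $\{|z| \le \gamma r_\bullet\}$ and those at larger scales) forces the inseparability degree to absorb the $p$-power factors $|n - 1|\gamma^{n - 1}$ arising above.
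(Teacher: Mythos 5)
Your reduction to the case where every pole of $Q$ has norm strictly greater than $\gamma(Q)\,r_\bullet$, and the Newton-polygon argument producing a critical point $c$ with $|c| \le \gamma(Q)\,r_\bullet$, both match the paper exactly (\cref{ss:c-proof-critically-mapped'}). But the final step is a genuine gap, and you are right to flag it: the bound $|Q(c)| \le \gamma|Q'(0)|\,r_\bullet \cdot \max_{n \ge 2}|n-1|\gamma^{n-1}$ obtained from the identity $Q(c) = -\sum_{n \ge 2}(n-1)a_n c^n$ together with $|a_n|r_\bullet^{\,n-1} \le |a_1|$ and $|c| \le \gamma r_\bullet$ is useless when $\gamma > 1$, since $\gamma^{n-1}$ is unbounded. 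The estimate $|a_n|r_\bullet^{\,n-1} \le |a_1|$ is only tight on the first Newton segment of $Q$ and discards the faster decay of later segments, so term-by-term at radius $r_\bullet$ cannot close. Your remark that the later Newton segments and the structure of $\wmax(Q)$ should ``absorb'' the offending factors is an intuition, not an argument; as written the proof is incomplete.

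What the paper does instead is to work directly at the exact radius of the critical point rather than at $r_\bullet$. It sets $r_0 := \bigl(\sup\{|na_n/a_1|^{1/(n-1)} : n \ge 2\}\bigr)^{-1}$, notes that this supremum is attained (so $Q$ has a critical point $c$ with $|c| = r_0$), and then applies the ultrametric inequality at radius $r_0$: $|Q(c)| \le \max_n |a_n| r_0^n = |a_d| r_0^d$, where $d := \deg_Q(\{|z| < r_0\})$ is the degree of the extreme monomial at that radius. Two observations then close the bound. First, $r_0^{d-1} \le |a_1|/|da_d|$ is immediate from the definition of $r_0$, so $|a_d|r_0^{d-1} \le |d|^{-1}|Q'(0)|$. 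Second -- and this is the crucial input, which your ``refined power series estimate'' would have to reproduce -- $|d| \ge |\wmax(Q)|$, because the inseparability degree of $Q$ at the Gauss point $x(r_0)$ equals the $p$-power part of $d$ and is therefore bounded by $\wmax(Q)$. Chaining these with $r_0 \le \gamma(Q)\,r_\bullet$ (which you proved) gives $|Q(c)| \le |a_d|r_0^d \le |a_d|r_0^{d-1}\gamma r_\bullet \le |d|^{-1}|Q'(0)|\gamma r_\bullet \le |\wmax(Q)|^{-1}|Q'(0)|\gamma(Q)\,r_\bullet$, as required. The moral is that the sharp bound comes from the \emph{single} dominant monomial at the critical radius and its degree's relation to $\wmax(Q)$, not from summing naive term-by-term bounds anchored at $r_\bullet$.
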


When ${\wmax(Q) > 1}$, the hypothesis ${|\wmax(Q)| > 0}$ implies ${p > 0}$ and ${0 < |p| < 1}$, so~$\gamma(Q)$ is defined and satisfies ${\gamma(Q) > 1}$.

The following variant of \cref{t:critically-mapped'} locates a nonzero critical value, under a more restrictive hypothesis.
The proof of \cref{t:infinitely-attracted'}, stated in~\cref{s:critically-attracted}, relies on it.
Since ${\lambda(\deg(Q)) \le |\wmax(Q)|}$, \cref{t:strictly-critically-mapped} in~\cref{s:introdcution} is a direct consequence of the following theorem with~$Q(z)$ replaced by ${Q(z + z_0) - Q(z_0)}$ and ${\whr = \lambda(d)^{-(d - 1)} |Q'(z_0)| r}$.

\begin{custtheo}{D'}
  \label{t:strictly-critically-mapped'}
  Let~$Q$ and~$r$ be as in \cref{t:critically-mapped'}, let~$\whr$ be in~$\R_{> 0}$, and denote by~$X$ the component of ${Q^{-1}(\{ z \in K \: |z| \le \whr \})}$ containing~$0$.
  Then,
  \begin{equation}
    \label{eq:39}
    \whr
    \ge
    |\wmax(Q)|^{-(\deg_Q(X) - 1)} |Q'(0)| r
  \end{equation}
  implies that~$Q$ has a critical point in~$X$ that is not a zero.
\end{custtheo}

The following theorem follows from the proof of \cite[Theorems~1.4]{BenIngJonLev14}.
The proof of \cref{t:infinitely-attracted'}, stated in~\cref{s:critically-attracted}, relies on it.

\begin{theoalph}
  \label{t:strictly-critically-mapped''}
  Let~$Q$ be a rational map of degree at least two with coefficients in~$K$ satisfying ${Q(\infty) = \infty}$, ${Q(0) = 0}$, ${Q'(0) \neq 0}$, and ${|\wmax(Q)| > 0}$.
  Furthermore, denote by~$r_{\bullet}$ the smallest norm of a pole or a zero of~$Q$ distinct from~$0$, let~$\whr_{\bullet}$ be in~$\R_{> 0}$, and denote by~$X$ the component of ${Q^{-1}(\{ z \in K \: |z| \le \whr_{\bullet} \})}$ containing~$0$.
  Then,
  \begin{equation}
    \label{eq:40}
    \whr_{\bullet}
    \ge
    |\wmax(Q)|^{-\deg_Q(X)} |Q'(0)| r_{\bullet}
  \end{equation}
  implies that~$Q$ has a critical point in~$X$ that is not a zero.
\end{theoalph}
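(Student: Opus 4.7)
The plan is to follow the strategy of \cite[Theorem~1.4]{BenIngJonLev14}, which parallels the proof of \cref{l:critically-holomorphic'} above: we work with the distorted log-size function $G_{-d}$ from \cref{ss:distorded-log-size} along a path in the \textsc{Berkovich} tree. Setting $d \= \deg_Q(X)$, we argue by contraposition: assume every critical point of $Q$ in $X$ is a zero of $Q$, and derive the strict inequality $\whr_\bullet < |\wmax(Q)|^{-d} |Q'(0)| r_\bullet$.

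The first task is to locate two key points of $\HK$ and evaluate $G_{-d}$ at them. Since $Q(\infty) = \infty$ and $\infty \notin \{z \in K : |z| \le \whr_\bullet\}$, the component $X$ contains no pole of $Q$. By the choice of $r_\bullet$, the only zero of $Q$ in $\{z \in K : |z| < r_\bullet\}$ is the simple zero at $0$; combined with the working hypothesis and $|\wmax(Q)| > 0$, this forces $Q'$ to be nonvanishing on this open disk, so $Q$ is univalent there and maps it onto $\{z \in K : |z| < |Q'(0)| r_\bullet\}$. In particular, for every $\rho < r_\bullet$, denoting by $x(\rho) \in \HK$ the point associated with $\{z \in K : |z| \le \rho\}$, we have $\{z \in K : |z| \le \rho\} \subseteq X$, $\wf_Q(x(\rho)) = 0$, and $G_{-d}(x(\rho)) = \log(|Q'(0)|\rho)$. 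On the other side, let $x_X \in \HK$ be the \textsc{Shilov} boundary point of $X$ lying on the outward ray from $0$---explicitly $x_X = x(r_X)$, where $r_X$ is the radius of the smallest closed disk of $K$ containing $X$---so that $Q(x_X) = x(\whr_\bullet)$. Since $\wdeg{Q}(x_X) \le \wmax(Q)$ and both are powers of $p$, we have $|\wdeg{Q}(x_X)| \ge |\wmax(Q)|$, hence by~\eqref{eq:15} the bound $\wf_Q(x_X) \le \log|\wmax(Q)|^{-1}$, which gives $G_{-d}(x_X) \ge \log(\whr_\bullet |\wmax(Q)|^d)$.

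Along the ray $[x(\rho), x_X]$ in $\HK$ with $\rho < r_\bullet$, we apply \cref{l:distorded-log-size} with $d$ replaced by $-d$. Using that non-zero critical points of $Q$ do not lie in $X$ (working hypothesis), combined with the ramification identity~\eqref{eq:30} (derived from $|\wmax(Q)| > 0$) and the decomposition~\eqref{eq:31}, a case analysis parallel to that in the proof of \cref{l:critically-holomorphic'} yields $\partial G_{-d}(x) \le 0$ at every point of the ray, with strict inequality integrating to a positive gap. Integrating produces $G_{-d}(x(\rho)) > G_{-d}(x_X)$, and letting $\rho \to r_\bullet^-$ we conclude $\log(|Q'(0)| r_\bullet) > \log(\whr_\bullet |\wmax(Q)|^d)$, i.e., $\whr_\bullet < |\wmax(Q)|^{-d} |Q'(0)| r_\bullet$, contradicting the hypothesis. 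The main obstacle will be this last step: verifying the sign of $\partial G_{-d}$ along the path requires careful bookkeeping, since as we move outward the balls $B(x)$ may absorb poles of $Q$ lying outside $X$ (in the ``holes'' of the affinoid $X$); the signed count in~\eqref{eq:17} must be used together with~\eqref{eq:30}--\eqref{eq:31} to balance the contributions of in-$X$ and out-of-$X$ zeros and poles against the number of non-zero critical points, which are excluded by the working hypothesis.
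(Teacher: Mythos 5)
Your proposal diverges substantially from the paper's proof, and as written it contains a concrete error that the argument cannot survive. You work directly on $Q$ with the function $G_{-d}$ along the single segment $[x(\rho), x_X]$ in $\HK$ and claim that, under the working hypothesis, the case analysis yields $\partial G_{-d}(x) \le 0$ at every point of that segment. This is false already at the starting point: for any $\rho < r_\bullet$ the ball $B(x(\rho)) = \{z \in K : |z| \le \rho\}$ contains only the simple zero at $0$ and no pole or critical point, so \cref{l:distorded-log-size} with $d$ replaced by $-d$ gives
\begin{equation*}
\partial G_{-d}(x(\rho))
= -(d-1)(1 - 0) + d(0 - 0 + 1)
= 1 > 0,
\end{equation*}
which is also consistent with your own evaluation $G_{-d}(x(\rho)) = \log(|Q'(0)|\rho)$, a strictly increasing function of $\log\rho$. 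So the monotonicity you want to integrate is contradicted at the outset, and the later step ``Integrating produces $G_{-d}(x(\rho)) > G_{-d}(x_X)$'' has no support. This is not a fixable typo: the actual profile of $G_{-d}$ along the ray is non-monotone, and the role of Lemma~\ref{l:critically-holomorphic'}'s argument is to locate a point where the right derivative is \emph{positive} (see $r_\ddag$ and~\eqref{eq:28}) and count critical points from that, not to establish everywhere-nonpositivity. Beyond this, the hard bookkeeping you only gesture at---how $\partial G_{-d}(x)$ behaves as $B(x)$ absorbs the ``holes'' of $X$ containing poles---is exactly where the single-ray version fails to capture the affinoid geometry when $X$ is not a ball; each hole contributes both poles of $Q$ and poles of $Q'$ with multiplicities tied by~\eqref{eq:75}, and a single outward ray does not control where these enter relative to where the alleged critical points hide.

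The paper's actual proof takes a structurally different route. It normalizes with the \textsc{M\"obius} transformations $\varphi(z) = \eta/z$ and $\hvarphi(z) = \eta Q'(0)/z$, $|\eta| = r_\bullet$, passes to $Q_\dag = \hvarphi \circ Q \circ \varphi^{-1}$, and works with $G_d$ (positive index) rather than $G_{-d}$. After conjugation, $Q_\dag$ maps the disk $\{|z| > 1\}$ univalently onto itself, so $Q_\dag(\xcan) = \xcan$ and $G_d(\xcan) = 0$ gives a clean reference value; the inequality~\eqref{eq:84} then forces a drop along \emph{every} boundary ray of $\aX_\dag$, and a global affinoid count over the complement $\hX$ of the balls $B(\whx)$, $\whx \in L$, combined with~\eqref{eq:89}--\eqref{eq:92}, produces a critical point of $Q_\dag$ in $X_\dag$ that is not a pole. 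The conjugation and the multi-boundary count are what make the ``holes'' of $X$ tractable; your single unconjugated ray does not supply a substitute for either.
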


The following corollary is a direct consequence of the previous theorem.

\begin{coro}
  \label{c:strictly-critically-mapped''}
  Let~$Q$ be a rational map of degree at least two with coefficients in~$K$ satisfying ${Q(\infty) = \infty}$ and ${|\wmax(Q)| > 0}$.
  Let~$z_0$ be in~$K$ that is not a pole or a critical point of~$Q$, and denote by~$r_{\bullet}$ the shortest norm distance from~$z_0$ to a pole or a preimage of~$Q(z_0)$ by~$Q$ distinct from~$z_0$.
  Then, ${Q}$ has a critical value~$v$ in~$K$ satisfying
  \begin{equation}
    \label{eq:41}
    0
    <
    |v - Q(z_0)|
    \le
    |\wmax(Q)|^{-\deg(Q)} |Q'(z_0)| r_{\bullet}.
  \end{equation}
\end{coro}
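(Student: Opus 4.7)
The plan is to deduce \cref{c:strictly-critically-mapped''} directly from \cref{t:strictly-critically-mapped''} by a translation that moves $z_0$ to the origin of both the source and the target. Concretely, I would work with ${\tQ(z) \= Q(z + z_0) - Q(z_0)}$, which fixes $\infty$, vanishes at $0$, and has ${\tQ'(0) = Q'(z_0) \neq 0}$ since $z_0$ is not a critical point. Being an affine conjugate of $Q$, the map $\tQ$ has the same degree and the same inseparability data, so ${|\wmax(\tQ)| = |\wmax(Q)| > 0}$, and all hypotheses of \cref{t:strictly-critically-mapped''} are in place.

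The second step is to match the geometric quantities. The poles of $\tQ$ are the translates by $-z_0$ of the poles of $Q$, and the zeros of $\tQ$ are the translates by $-z_0$ of $Q^{-1}(Q(z_0))$. The zero at $0$ corresponds to $z_0$ itself, so the nonzero zeros of $\tQ$ correspond bijectively to the preimages of $Q(z_0)$ distinct from $z_0$. It follows that the smallest norm of a pole or nonzero zero of $\tQ$ is exactly the $r_{\bullet}$ appearing in the statement of the corollary.

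The final step is to apply \cref{t:strictly-critically-mapped''} to $\tQ$ with ${\whr_{\bullet} \= |\wmax(Q)|^{-\deg(Q)} |Q'(z_0)| r_{\bullet}}$, letting $X$ be the component of ${\tQ^{-1}(\{ z \in K \: |z| \le \whr_{\bullet} \})}$ containing $0$. Since $\wmax(Q)$ is a positive integer and hence satisfies ${|\wmax(Q)| \le 1}$, and since ${\deg_{\tQ}(X) \le \deg(\tQ) = \deg(Q)}$, the inequality ${|\wmax(\tQ)|^{-\deg_{\tQ}(X)} \le |\wmax(Q)|^{-\deg(Q)}}$ holds, so the hypothesis of the theorem is met. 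The theorem then supplies a critical point $c$ of $\tQ$ in $X$ that is not a zero, and ${v \= Q(c + z_0) = Q(z_0) + \tQ(c)}$ is a critical value of $Q$ with ${v \neq Q(z_0)}$ and ${|v - Q(z_0)| = |\tQ(c)| \le \whr_{\bullet}}$, as required. I do not anticipate a genuine obstacle: the argument is a purely formal reduction, and the only point demanding attention is the monotonicity in the exponent described above, which hinges on the elementary bound ${|\wmax(Q)| \le 1}$.
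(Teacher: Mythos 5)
Your argument is correct and is precisely the ``direct consequence'' the paper has in mind (it offers no proof for this corollary, merely remarking that it follows from \cref{t:strictly-critically-mapped''}). The translation $\tQ(z) = Q(z+z_0) - Q(z_0)$, the identification of poles and nonzero zeros of $\tQ$ with poles and other preimages of $Q(z_0)$, and the monotonicity observation that $|\wmax(Q)|^{-\deg_{\tQ}(X)} \le |\wmax(Q)|^{-\deg(Q)}$ because $|\wmax(Q)| \le 1$ are exactly the steps needed, and you execute them correctly.
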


The rest of this section proves Theorems~\ref{t:critically-mapped'}, \ref{t:strictly-critically-mapped'}, and~\ref{t:strictly-critically-mapped''}, and \cref{c:critically-mapped'}.
Each of the proofs of Theorems~\ref{t:critically-mapped'} and~\ref{t:strictly-critically-mapped'} splits into two cases, according to whether the poles of~$Q$ are large or small.
To describe these cases more precisely, denote by~$B_0$ the ball expected to contain a critical value of~$Q$ and by~$X_0$ the component of~$Q^{-1}(B_0)$  containing~$0$.
The first case occurs when~$X_0$ is a ball.
In this situation, \cref{t:critically-mapped'} follows from \cref{l:critically-holomorphic} and \cref{t:strictly-critically-mapped'} from \cref{l:critically-holomorphic'}.
The second case, when~$X_0$ is not a ball, parallels the proof of \cite[Theorem~4.1]{BenIngJonLev14}.
See also the proof of \cref{t:strictly-critically-mapped''} in~\cref{ss:proof-critically-mapped'''}.
A key step is to employ the distorted log-size, introduced in \cite{BenIngJonLev14} and reviewed in~\cref{ss:distorded-log-size}, to construct an affinoid to make residue computations.
The fact that~$X_0$ is not a ball is decisive.
It allows a distortion factor of ${\deg_R(X) - 1}$ in the proof of \cref{t:strictly-critically-mapped'}, leading to the exponent in the statement.
This improvement over the exponent~$\deg_R(X)$ in the statement of \cref{t:strictly-critically-mapped''} is crucial in the proof of \cref{t:infinitely-attracted'} in~\cref{s:critically-attracted}.
Likewise, the optimal distortion factor of~$1$ in the proof of \cref{t:critically-mapped'} is possible thanks to the fact that~$X_0$ is not a ball and the hypothesis, made without loss of generality, that every zero of~$Q$ in~$X_0$ is simple.

The proofs of \cref{t:critically-mapped'}, \cref{c:critically-mapped'}, and \cref{t:strictly-critically-mapped'} occupy \S\S\ref{ss:proof-critically-mapped}, \ref{ss:c-proof-critically-mapped'}, and \ref{ss:proof-critically-mapped'}, respectively.
As mentioned above, \cref{t:strictly-critically-mapped''} follows from the proof of \cite[Theorems~1.4]{BenIngJonLev14}.
\cref{ss:proof-critically-mapped'''} proves this result for completeness.

The rest of this section uses the following notation.
For each~$\varrho$ in~$\R_{> 0}$, denote by~$x(\varrho)$ the point of~$\PKber$ associated with ${\{z \in K \: |z| \le \varrho\}}$.
As in~\cref{ss:distorded-log-size}, for each~$x$ in~$\HK$ of type~II or~III, denote by~$B(x)$ the ball of~$K$ associated with it.

\subsection{Proof of \cref{t:critically-mapped'}}
\label{ss:proof-critically-mapped}
Put
\begin{equation}
  \label{eq:42}
  r_0
  \=
  |\wmax(Q)|^{-1} |Q'(0)| r,
  B_0
  \=
  \{z \in K \: |z| \le r_0\},
\end{equation}
and denote by~$X_0$ the component of~$Q^{-1}(B_0)$ containing~$0$.
The goal is to prove that~$Q$ has a critical point in~$X_0$.
To do this, put
\begin{equation}
  \label{eq:43}
  D
  \=
  \{z \in K \: |z| < r\},
  D'
  \=
  Q(D),
  \text{ and }
  r'
  \=
  \diam(D').
\end{equation}
Since~$D$ contains no pole of~$Q$, the set~$D'$ is a ball of~$K$ containing~$0$ and~$r'$ is finite.
If ${r' > r_0}$, then ${B_0 \subseteq D'}$, ${X_0 \subseteq D}$, and thus~$X_0$ is a ball of~$K$ satisfying ${\diam(X_0) < r}$.
On the other hand,
\begin{equation}
  \label{eq:44}
  |Q'(0)|
  =
  \frac{|\wmax(Q)| r_0}{r}
  <
  \frac{|\wmax(Q)| r_0}{\diam(X_0)},
\end{equation}
so \cref{l:critically-holomorphic} implies that~$Q$ has a critical point in~$X_0$.
For the remainder of the proof, suppose ${r' \le r_0}$, so ${D' \subseteq B_0}$ and ${D \subseteq X_0}$.

Note that~$r$ and~$r'$ are both in~$|K^{\times}|$.
Let~$\eta$ and~$\eta'$ in~$K$ be such that ${|\eta| = r}$ and ${|\eta'| = r'}$, and let~$\varphi$ and~$\hvarphi$ be the \textsc{M{\"o}bius} transformations given in affine coordinates by
\begin{equation}
  \label{eq:45}
  \varphi(z)
  \=
  \frac{\eta}{z}
  \text{ and }
  \hvarphi(z)
  \=
  \frac{\eta'}{z}.
\end{equation}
Moreover, denote by~$Q_{\dag}$ the rational map with coefficients in~$K$ defined by ${Q_{\dag}(z) \= \hvarphi \circ Q \circ \varphi^{-1}}$, and put
\begin{equation}
  \label{eq:46}
  X_{\dag}
  \=
  \varphi(X_0),
  \varrho_{\dag}
  \=
  r'/r_0,
  \text{ and }
  B_{\dag}
  \=
  \{ z \in \PK \: |z| \ge \varrho_{\dag} \}.
\end{equation}
Then, ${\hvarphi(B_0) = B_{\dag}}$ and ${X_{\dag}}$ is the component of~$Q_{\dag}^{-1}(B_{\dag})$ containing~$\infty$.
So, to prove the theorem it is sufficient to show that~$X_{\dag}$ contains a critical point of~$Q_{\dag}$.
To do this, for each real number~$\varrho$ satisfying ${\varrho < \varrho_{\dag}}$, denote by~$\aX_{\varrho}$ the connected component of~$Q_{\dag}^{-1}(\{x \in \PKber \: |x| > \varrho\})$ containing~$\infty$, and put ${X_{\varrho} \= \aX_{\varrho} \cap \PK}$.
Since~$Q_{\dag}$ has a finite number of critical points and ${\bigcap_{\varrho < \varrho_{\dag}} X_{\varrho} = X_{\dag}}$, it is sufficient to show that, for every real number~$\varrho$ satisfying ${\varrho < \varrho_{\dag}}$, the set~$X_{\varrho}$ contains a critical point of~$Q_{\dag}$.
Fix such~$\varrho$ and suppose that every pole of~$Q_{\dag}$ in~$X_{\varrho}$ is simple, for otherwise there is nothing to prove.

The inequality ${r' \le r_0}$ implies ${\varrho < \varrho_{\dag} \le 1}$.
Let~$G_1$ be the function defined in~\cref{ss:distorded-log-size} with ${R = Q_{\dag}}$ and ${d = 1}$, and put ${D_{\infty} \= \{z \in \PK \: |z| > 1\}}$.
From the above, $D_{\infty}$ contains no critical point of~$Q_{\dag}$,
\begin{equation}
  \label{eq:47}
  Q_{\dag}(0)
  =
  0,
  Q_{\dag}(\infty)
  =
  \infty,
  \max \{ |z| \: z \in Q_{\dag}^{-1}(0) \}
  =
  1,
  \| Q_{\dag}' \|(\infty)
  =
  |Q'(0)|r/r',
\end{equation}
\begin{equation}
  \label{eq:48}
  \wmax(Q_{\dag})
  =
  \wmax(Q),
  \varphi(D)
  =
  \hvarphi(D')
  =
  D_{\infty},
  \text{ and }
  Q_{\dag}(D_{\infty})
  =
  D_{\infty}.
\end{equation}
It follows that ${Q_{\dag}(\xcan) = \xcan}$, that~$Q_{\dag}^{-1}(\xcan)$ is disjoint from~$\{x \in \PKber \: |x| > 1 \}$, and that~$\| Q_{\dag}' \|$ is constant on this set \cite[\emph{Proposition}~3.1]{0FavRiv25}.
Hence,
\begin{equation}
  \label{eq:49}
  \| Q_{\dag}' \|(\xcan)
  =
  \| Q_{\dag}' \|(\infty)
  \text{ and }
  G_1(\xcan)
  =
  -\log \|Q_{\dag}'\|(\infty).
\end{equation}
Let~$x_0$ be in~$\partial \aX_{\varrho}$.
Then, ${Q_{\dag}(x_0) = x(\varrho)}$ and ${\diam(x_0) < 1}$ because ${Q_{\dag}(D_{\infty}) \subseteq D_{\infty}}$.
Combining the last equality in~\eqref{eq:47}, the second equality in~\eqref{eq:49}, and~\eqref{eq:15}, yields
\begin{multline}
  \label{eq:50}
  G_1(x_0)
  =
  \log \varrho + \wf_{Q_{\dag}}(x_0)
  \le
  \log \varrho - \log |\wmax(Q_{\dag})|
  \\ <
  \log \varrho_{\dag} - \log |\wmax(Q_{\dag})|
  =
  -\log \| Q_{\dag}' \|(\infty)
  =
  G_1(\xcan).
\end{multline}
Denote by~$\whx_0$ the point of~$[x_0, \xcan]$ minimizing~$G_1$ that is the closest to~$\xcan$.
So, ${\diam(\whx_0) < 1}$ and ${\partial G_1(\whx_0) > 0}$.
But~$\partial G_1(\whx_0)$ is an integer by \cref{l:distorded-log-size} with ${d = 1}$, so ${\partial G_1(\whx_0) \ge 1}$.
Putting ${L \= \{ \whx_0 \: x_0 \in \partial \aX_{\varrho}\}}$ and applying \cref{l:distorded-log-size} with ${d = 1}$ to each of element of~$L$, yields
\begin{multline}
  \label{eq:51}
  \# L
  \le
  \sum_{\whx \in L} \partial G_1(\whx)
  \\ =
  \sum_{\whx \in L} [2(Z(Q_{\dag}, B(\whx)) - P(Q_{\dag}, B(\whx))) - (Z(Q_{\dag}', B(\whx)) - P(Q_{\dag}', B(\whx)) + 1)].
\end{multline}
For distinct~$x_0$ and~$x_0'$ in~$L$, the point~$\whx_0$ is outside~$[\whx_0', \xcan]$ and~$\whx_0'$ is outside~$[\whx_0, \xcan]$, so ${B(\whx_0) \cap B(\whx_0') = \emptyset}$.
Putting
\begin{equation}
  \label{eq:52}
  \hX
  \=
  \PK \setminus \bigcup_{\whx \in L} B(\whx),
\end{equation}
\begin{equation}
  \label{eq:53}
  2(P(Q_{\dag}, \hX) - Z(Q_{\dag}, \hX)) - (P(Q_{\dag}', \hX) - Z(Q_{\dag}', \hX))
  \ge
  2\# L.
\end{equation}
But ${\hX \subseteq X_{\varrho}}$, so ${Q_{\dag}(\hX) \subseteq \{ z \in \PK \: |z| > \varrho \}}$ and ${Z(Q_{\dag}, \hX) = 0}$.
On the other hand, since ${Q_{\dag}(\infty) = \infty}$ and every pole of~$Q_{\dag}$ in~$X_{\varrho}$ is simple, ${P(Q_{\dag}', \hX) = 2 P(Q_{\dag}, \hX) - 2}$.
Hence, \eqref{eq:53} yields
\begin{equation}
  \label{eq:54}
  Z(Q_{\dag}', \hX)
  \ge
  2\# L - 2.
\end{equation}
Since every point~$\whx$ of~$L$ satisfies ${\diam(\whx) < 1}$, the equality ${\#L = 1}$ would imply
\begin{equation}
  \label{eq:55}
  \{ z \in K \: |z| = 1\}
  \subseteq
  \hX
  \subseteq
  X_{\varrho}
\end{equation}
and hence that~$X_{\varrho}$ contains a zero of~$Q_{\dag}$ by the third equality in~\eqref{eq:47}.
This is absurd, because ${Q_{\dag}(X_{\varrho}) = \{ z \in \PK \: |z| > \varrho\}}$.
This proves ${\# L \ge 2}$ and hence ${Z(Q_{\dag}', \hX) \ge 2}$ by~\eqref{eq:54}.
Since ${\hX \subseteq X_{\varrho}}$, it follows that~$Q_{\dag}$ has a critical point in~$X_{\varrho}$.

\subsection{Proof of \cref{c:critically-mapped'}}
\label{ss:c-proof-critically-mapped'}
If ${Q'(0) = 0}$, then the desired assertion holds with ${v = 0}$.
Suppose ${Q'(0) \neq 0}$.
If~$Q$ has a pole of norm less than or equal to~$\gamma(Q) r_{\bullet}$, then the desired assertion follows from \cref{t:critically-mapped'}.
Suppose the norm of every pole of~$Q$ is strictly larger than~$\gamma(Q) r_{\bullet}$.
This implies that the smallest norm of a zero of~$Q$ different from~$0$ is equal to~$r_{\bullet}$, and that~$Q$ has a power series expansion~$\sum_{n = 1}^{+\infty} a_n z^n$ on~$\{ z \in K \: |z| \le \gamma(Q) r_{\bullet} \}$.
The latter implies ${|a_n|(\gamma(Q) r_{\bullet})^n \to 0}$ as ${n \to +\infty}$ and the former that, for every integer~$n$ satisfying ${n \ge 2}$, the inequality ${|a_n| r_{\bullet}^{n - 1} \le |a_1|}$ holds, with equality for some~$n$.
Denote by~$m$ the largest integer satisfying ${m \ge 2}$ and ${|a_m| r_{\bullet}^{m - 1} = |a_1|}$.
Then,
\begin{equation}
  \label{eq:56}
  \deg_Q(\{z \in K \: |z| \le r_{\bullet}\})
  =
  m
  \text{ and }
  |m|
  \ge
  |\wmax(Q)|
  >
  0.
\end{equation}
Put
\begin{equation}
  \label{eq:57}
  r_0
  \=
  \left( \sup \left\{ |n a_n/a_1|^{\frac{1}{n - 1}} \: n \in \Z, n \ge 2 \right\} \right)^{-1}.
\end{equation}
From the definition of~$m$ and the first inequality in~\eqref{eq:56},
\begin{equation}
  \label{eq:58}
  r_0
  \le
  |m a_m/a_1|^{-\frac{1}{m - 1}}
  =
  |m|^{-\frac{1}{m - 1}} r_{\bullet}
  \le
  \gamma(Q) r_{\bullet}.
\end{equation}
Since ${|a_n|(\gamma(Q) r_{\bullet})^n \to 0}$ as ${n \to +\infty}$, it follows that the supremum in the definition of~$r_0$ is attained.
Combined with an analysis of the \textsc{Newton} polygon of~$Q'$, this implies that~$Q$ has a critical point~$c$ of norm equal to~$r_0$.
Let~$d$ be the largest integer satisfying ${d \ge 2}$ and maximizing~$|a_d| r_0^{d}$.
Then,
\begin{equation}
  \label{eq:59}
  \deg_Q(\{z \in K \: |z| < r_0\})
  =
  d,
  |d|
  \ge
  |\wmax(Q)|
  >
  0,
\end{equation}
and thus, by the definition of~$r_0$ and~\eqref{eq:58},
\begin{multline}
  \label{eq:60}
  |Q(c)|
  \le
  |a_d| r_0^{d}
  \le
  |a_d| r_0^{d - 1} \gamma(Q) r_{\bullet}
  \le
  |a_d| (|d a_d/a_1|)^{-1} \gamma(Q) r_{\bullet}
  \\ =
  |d|^{-1} |Q'(0)| \gamma(Q) r_{\bullet}
  \le
  |\wmax(Q)|^{-1} |Q'(0)| \gamma(Q) r_{\bullet}.
  \qedhere
\end{multline}

\subsection{Proof of \cref{t:strictly-critically-mapped'}}
\label{ss:proof-critically-mapped'}
Suppose~\eqref{eq:39} holds, put
\begin{equation}
  \label{eq:61}
  d
  \=
  \deg_Q(X),
  r_0
  \=
  |\wmax(Q)|^{-(d - 1)} |Q'(0)| r,
  B_0
  \=
  \{ z \in K \: |z| \le r_0 \},
\end{equation}
and denote by~$X_0$ the component of~$Q^{-1}(B_0)$ containing~$0$.
The goal is to prove that~$Q$ has a critical point in~$X_0$ that is not a zero.
To do this, note ${\deg_Q(X_0) \le d}$ and put
\begin{equation}
  \label{eq:62}
  D
  \=
  \{z \in K \: |z| < r\},
  D'
  \=
  Q(D),
  \text{ and }
  r'
  \=
  \diam(D').
\end{equation}
Since~$D'$ has no pole of~$Q$, the set~$D'$ is a ball of~$K$ containing~$0$ and~$r'$ is finite.
If ${r' > r_0}$, then ${B_0 \subseteq D'}$, ${X_0 \subseteq D}$, and thus~$X_0$ is a ball of~$K$ satisfying ${\diam(X_0) < r}$.
On the other hand,
\begin{equation}
  \label{eq:63}
  |Q'(0)|
  =
  \frac{|\wmax(Q)|^{d - 1} r_0}{r}
  <
  \frac{|\wmax(Q)|^{d - 1} r_0}{\diam(X_0)}.
\end{equation}
This yields~\eqref{eq:21} in \cref{l:critically-holomorphic'} with~$r$ replaced by~$\diam(X_0)$ and~$r'$ by~$r_0$.
It follows that~$Q$ has a critical point of norm strictly less than~$\diam(X_0)$ that is not a zero.
Such a critical point belongs to~$X_0$, so the desired assertion holds when ${r' > r_0}$.
For the remainder of the proof, suppose ${r' \le r_0}$, so ${D' \subseteq B_0}$, ${D \subseteq X_0}$, and ${\deg_Q(D) \le d}$.
If ${|Q'(0)| < \exp(-(d - 1)\wf_Q(x(r))) r'/r}$, then \cref{l:critically-holomorphic'} implies that~$Q$ has a critical point in~$D$, and hence in~$X_0$, that is not a zero.
For the remainder of the proof, suppose
\begin{equation}
  \label{eq:64}
  |Q'(0)|
  \ge
  \exp(-(d - 1)\wf_Q(x(r))) r'/r.
\end{equation}

Note that~$r$ and~$r'$ are both in~$|K^{\times}|$.
Let~$\eta$ and~$\eta'$ in~$K$ be such that ${|\eta| = r}$ and ${|\eta'| = r'}$, and let~$\varphi$ and~$\hvarphi$ be the \textsc{M{\"o}bius} transformations given in affine coordinates by
\begin{equation}
  \label{eq:65}
  \varphi(z)
  \=
  \frac{\eta}{z}
  \text{ and }
  \hvarphi(z)
  \=
  \frac{\eta'}{z}.
\end{equation}
Moreover, denote by~$Q_{\dag}$ the rational map with coefficients in~$K$ defined by ${Q_{\dag}(z) \= \hvarphi \circ Q \circ \varphi^{-1}}$, and put
\begin{equation}
  \label{eq:66}
  X_{\dag}
  \=
  \varphi(X_0),
  \varrho_{\dag}
  \=
  r'/r_0,
  \text{ and }
  B_{\dag}
  \=
  \{ z \in \PK \: |z| \ge \varrho_{\dag} \}.
\end{equation}
Then, ${\hvarphi(B_0) = B_{\dag}}$ and ${X_{\dag}}$ is the component of~$Q_{\dag}^{-1}(B_{\dag})$ containing~$\infty$.
So, to prove the theorem it is sufficient to show that~$X_{\dag}$ contains a critical point of~$Q_{\dag}$ that is not a pole.
To do this, for each real number~$\varrho$ satisfying ${\varrho < \varrho_{\dag}}$, denote by~$\aX_{\varrho}$ the connected component of~$Q_{\dag}^{-1}(\{x \in \PKber \: |x| > \varrho\})$ containing~$\infty$, and put ${X_{\varrho} \= \aX_{\varrho} \cap \PK}$.
Since~$Q_{\dag}$ has a finite number of critical points and ${\bigcap_{\varrho < \varrho_{\dag}} X_{\varrho} = X_{\dag}}$, it is sufficient to show that, for every real number~$\varrho$ satisfying ${\varrho < \varrho_{\dag}}$, the set~$X_{\varrho}$ contains a critical point of~$Q_{\dag}$ that is not a pole.
Fix such~$\varrho$ that is sufficiently close to~$\varrho_{\dag}$ so that ${\deg_Q(X_{\varrho}) = d}$.

The inequality ${r' \le r_0}$ implies ${\varrho < \varrho_{\dag} \le 1}$.
Let~$G_{d - 1}$ be the function defined in~\cref{ss:distorded-log-size} with ${R = Q_{\dag}}$ and~$d$ replaced by ${d - 1}$, and put ${D_{\infty} \= \{z \in \PK \: |z| > 1\}}$.
From the above,
\begin{equation}
  \label{eq:67}
  Q_{\dag}(0)
  =
  0,
  Q_{\dag}(\infty)
  =
  \infty,
  \max \{ |z| \: z \in Q_{\dag}^{-1}(0) \}
  =
  1,
  \| Q_{\dag}' \|(\infty)
  =
  |Q'(0)|r/r',
\end{equation}
\begin{equation}
  \label{eq:68}
  \wmax(Q_{\dag})
  =
  \wmax(Q),
  \varphi(D)
  =
  \hvarphi(D')
  =
  D_{\infty},
  \text{ and }
  Q_{\dag}(D_{\infty})
  =
  D_{\infty}.
\end{equation}
It follows that ${Q_{\dag}(\xcan) = \xcan}$ and, by~\eqref{eq:64},
\begin{equation}
  \label{eq:69}
  G_{d - 1}(\xcan)
  =
  (d - 1) \wf_R(\xcan)
  \ge
  -\log (\| Q_{\dag}' \|(\infty)).
\end{equation}
Let~$x_0$ be in~$\partial \aX_{\varrho}$.
Then, ${Q_{\dag}(x_0) = x(\varrho)}$ and ${\diam(x_0) < 1}$ because ${Q_{\dag}(D_{\infty}) \subseteq D_{\infty}}$.
Combining the last equality in~\eqref{eq:67}, \eqref{eq:69}, and \eqref{eq:15}, yields
\begin{multline}
  \label{eq:70}
  G_{d - 1}(x_0)
  =
  \log \varrho + (d - 1)\wf_{Q_{\dag}}(x_0)
  \le
  \log \varrho - (d - 1) \log |\wmax(Q_{\dag})|
  \\ <
  \log \varrho_{\dag} - (d - 1) \log |\wmax(Q_{\dag})|
  =
  -\log \| Q_{\dag}' \|(\infty)
  \le
  G_{d - 1}(\xcan).
\end{multline}
Denote by~$\whx_0$ the point of~$[x_0, \xcan]$ minimizing~$G_{d - 1}$ that is the closest to~$\xcan$.
So, ${\diam(\whx_0) < 1}$ and ${\partial G_{d - 1}(\whx_0) > 0}$.
But~$\partial G_{d - 1}(\whx_0)$ is an integer by \cref{l:distorded-log-size} with~$d$ replaced by ${d - 1}$, so ${\partial G_{d - 1}(\whx_0) \ge 1}$.
Putting ${L \= \{ \whx_0 \: x_0 \in \partial \aX_{\varrho}\}}$ and applying \cref{l:distorded-log-size} with~$d$ replaced by ${d - 1}$ to each of element of~$L$, yields
\begin{multline}
  \label{eq:71}
  \# L
  \le
  \sum_{\whx \in L} \partial G_{d - 1}(\whx)
  \\ =
  \sum_{\whx \in L} [d(Z(Q_{\dag}, B(\whx)) - P(Q_{\dag}, B(\whx))) - (d - 1)(Z(Q_{\dag}', B(\whx)) - P(Q_{\dag}', B(\whx)) + 1)].
\end{multline}
For distinct~$x_0$ and~$x_0'$ in~$L$, the point~$\whx_0$ is outside~$[\whx_0', \xcan]$ and~$\whx_0'$ is outside~$[\whx_0, \xcan]$, so ${B(\whx_0) \cap B(\whx_0') = \emptyset}$.
Putting
\begin{equation}
  \label{eq:72}
  \hX
  \=
  \PK \setminus \bigcup_{\whx \in L} B(\whx),
\end{equation}
the right-most term in~\eqref{eq:71} is equal to
\begin{equation}
  \label{eq:73}
  d(P(Q_{\dag}, \hX) - Z(Q_{\dag}, \hX)) - (d - 1)(P(Q_{\dag}', \hX) - Z(Q_{\dag}', \hX) + \# L).
\end{equation}
But ${\hX \subseteq X_{\varrho}}$, so ${P(Q_{\dag}, \hX) \le d}$, ${Q_{\dag}(\hX) \subseteq \{ z \in \PK \: |z| > \varrho \}}$, ${Z(Q_{\dag}, \hX) = 0}$, and thus~\eqref{eq:71} yields
\begin{multline}
  \label{eq:74}
  (d - 1)Z(Q_{\dag}', \hX)
  \ge
  \# L - P(Q_{\dag}, \hX) + (d - 1) (P(Q_{\dag}', \hX) - P(Q_{\dag}, \hX) + \# L)
  \\ \ge
  \# L - 1 + (d - 1) (P(Q_{\dag}', \hX) - P(Q_{\dag}, \hX) + \# L - 1).
\end{multline}
On the other hand, from ${Q_{\dag}(\infty) = \infty}$ and ${\deg_{Q_{\dag}}(\infty) = 1}$ it follows that the poles of~$Q_{\dag}'$ coincide with the finite poles of~$Q_{\dag}$.
Together with ${|\wmax(Q_{\dag})| > 0}$, this implies that every pole~$z_0$ of~$Q_{\dag}'$ satisfies
\begin{equation}
  \label{eq:75}
  p
  \not\mid
  \deg_{Q_{\dag}}(z_0)
  \text{ and }
  \ord_{Q_{\dag}'}(z_0)
  =
  \deg_{Q_{\dag}}(z_0) + 1.
\end{equation}
Thus,
\begin{equation}
  \label{eq:76}
  P(Q_{\dag}', \hX)
  =
  P(Q_{\dag}, \hX) + \#(\hX \cap Q_{\dag}^{-1}(\infty)) - 2
\end{equation}
and~\eqref{eq:74} yields
\begin{equation}
  \label{eq:77}
  (d - 1) Z(Q_{\dag}', \hX)
  \ge
  \# L - 1 + (d - 1) (\#(\hX \cap Q_{\dag}^{-1}(\infty)) + \# L - 3).
\end{equation}
Since every point~$\whx$ of~$L$ satisfies ${\diam(\whx) < 1}$, the equality ${\#L = 1}$ would imply
\begin{equation}
  \label{eq:78}
  \{ z \in K \: |z| = 1\}
  \subseteq
  \hX
  \subseteq
  X_{\varrho}
\end{equation}
and hence that~$X_{\varrho}$ contains a zero of~$Q_{\dag}$ by the third equality in~\eqref{eq:67}.
This is absurd, because ${Q_{\dag}(X_{\varrho}) = \{ z \in \PK \: |z| > \varrho\}}$.
This proves ${\# L \ge 2}$.
Combined with ${\#(\hX \cap Q_{\dag}^{-1}(\infty)) \ge 1}$ and~\eqref{eq:77}, this implies ${(d - 1)Z(Q_{\dag}', \hX) \ge 1}$ and hence ${d \ge 2}$ and ${Z(Q_{\dag}', \hX) \ge 1}$.
Since ${\hX \subseteq X_{\dag}}$, it follows that~$Q_{\dag}$ has a critical point in~$X_{\dag}$ that is not a pole.

\subsection{Proof of \cref{t:strictly-critically-mapped''}}
\label{ss:proof-critically-mapped'''}
Suppose~\eqref{eq:40} holds, put ${d \= \deg_Q(X)}$, and for each real number~$\check{r}$ satisfying ${\check{r} > \whr_{\bullet}}$, denote by~$X_{\check{r}}$ the component of~$Q^{-1}(\{ z \in K \: |z| < \check{r} \})$ containing~$0$.
Since~$Q$ has a finite number of critical points, it is sufficient to prove that for every such~$\check{r}$ the set~$X_{\check{r}}$ contains a critical point of~$Q$ that is not a zero.
Fix a real number~$\check{r}$ satisfying ${\check{r} > \whr_{\bullet}}$ that is sufficiently close to~$\whr_{\bullet}$, so that ${\deg_Q(X_{\check{r}}) = d}$.

Let~$\eta$ in~$K$ be such that ${|\eta| = r_{\bullet}}$ and let~$\varphi$ and~$\hvarphi$ be the \textsc{M{\"o}bius} transformations given in affine coordinates by
\begin{equation}
  \label{eq:79}
  \varphi(z)
  \=
  \frac{\eta}{z}
  \text{ and }
  \hvarphi(z)
  \=
  \frac{\eta Q'(0)}{z}.
\end{equation}
Moreover, let~$Q_{\dag}$ be the rational map with coefficients in~$K$ defined by ${Q_{\dag}(z) \= \hvarphi \circ Q \circ \varphi^{-1}}$, and put
\begin{equation}
  \label{eq:80}
  X_{\dag}
  \=
  \varphi(X_{\check{r}}),
  \varrho_{\dag}
  \=
  \frac{|Q'(0)| r_{\bullet}}{\check{r}},
  \text{ and }
  D_{\dag}
  \=
  \{x \in \PK \: |x| > \varrho_{\dag} \}.
\end{equation}
Then, ${\hvarphi(\{ x \in \PKber \: |x| < \check{r} \}) = D_{\dag}}$ and~$X_{\dag}$ is the component of~$Q_{\dag}^{-1}(D_{\dag})$ containing~$\infty$.
Thus, to prove the theorem it is sufficient to prove that~$X_{\dag}$ contains a critical point of~$Q_{\dag}$ that is not a pole.

Note that
\begin{equation}
  \label{eq:81}
  Q_{\dag}(0)
  =
  0,
  Q_{\dag}(\infty)
  =
  \infty,
  \deg_{Q_{\dag}}(\infty)
  =
  1,
  \| Q_{\dag}' \|(\infty)
  =
  1,
\end{equation}
\begin{equation}
  \label{eq:82}
  \max \{ |z| \: z \text{ finite zero or pole of~$Q_{\dag}$} \}
  =
  1,
\end{equation}
\begin{equation}
  \label{eq:83}
  0
  \not\in
  X_{\dag},
  \deg_{Q_{\dag}}(X_{\dag})
  =
  d,
  \text{ and }
  0
  <
  \varrho_{\dag}
  <
  |\wmax(Q_{\dag})|^d.
\end{equation}
Let~$G_d$ be the function defined in~\cref{ss:distorded-log-size} with ${R = Q_{\dag}}$ and put ${D_{\infty} \= \{ z \in \PK \: |z| > 1 \}}$.
Since~$D_{\infty}$ contains no zero of~$Q_{\dag}$ and ${Q_{\dag}(\infty) = \infty}$, the set~$Q_{\dag}(D_{\infty})$ is a disk of~$\PK$ containing~$\infty$ but not~$0$.
Moreover, $Q_{\dag}$ is univalent on~$D_{\infty}$ because~$\infty$ is the only preimage of~$\infty$ by~$Q_{\dag}$ in~$D_{\infty}$ and ${\deg_{Q_{\dag}}(\infty) = 1}$.
Combined with ${\| Q_{\dag}' \|(\infty) = 1}$, this implies that~$Q_{\dag}$ maps~$D_{\infty}$ univalently onto itself.
In particular, ${Q_{\dag}(\xcan) = \xcan}$ and ${\wdeg{Q_{\dag}}(\xcan) = 1}$.
Thus, ${\wf_R(\xcan) = 0}$ by~\eqref{eq:15} and hence ${G_d(\xcan) = 0}$.
Let~$\aX_{\dag}$ be the fundamental open set of~$\PKber$ satisfying ${\aX_{\dag} \cap \PK = X_{\dag}}$.
Let~$x_0$ be in~$\partial \aX_{\dag}$.
Then, $Q_{\dag}(x_0)$ is the point of~$\PKber$ associated with ${\{z \in K \: |z| \le \varrho_{\dag}\}}$.
Thus, \eqref{eq:83} and~\eqref{eq:15} yield
\begin{equation}
  \label{eq:84}
  G_d(x_0)
  =
  \log \varrho_{\dag} - d\wf_{Q_{\dag}}(x_0)
  \le
  \log \varrho_{\dag} - d\log |\wmax(Q_{\dag})|
  <
  0
  =
  G_d(\xcan).
\end{equation}
Denote by~$\whx_0$ the point of~$[x_0, \xcan]$ minimizing~$G_d$ that is the closest to~$\xcan$.
So, ${\diam(\whx_0) < 1}$ and ${\partial G_d(\whx_0) > 0}$.
But~$\partial G_d(\whx_0)$ is an integer by \cref{l:distorded-log-size}, so ${\partial G_d(\whx_0) \ge 1}$.
Putting ${L \= \{ \whx_0 \: x_0 \in \partial \aX_{\dag}\}}$ and applying \cref{l:distorded-log-size} to each of element of~$L$, yields
\begin{multline}
  \label{eq:85}
  \# L
  \le
  \sum_{\whx \in L} \partial G_d(\whx)
  \\ =
  \sum_{\whx \in L} [(d + 1)(Z(Q_{\dag}, B(\whx)) - P(Q_{\dag}, B(\whx))) - d (Z(Q_{\dag}', B(\whx)) - P(Q_{\dag}', B(\whx)) + 1)].
\end{multline}
For distinct~$x_0$ and~$x_0'$ in~$L$, the point~$\whx_0$ is outside~$[\whx_0', \xcan]$ and~$\whx_0'$ is outside~$[\whx_0, \xcan]$, so ${B(\whx_0) \cap B(\whx_0') = \emptyset}$.
Putting
\begin{equation}
  \label{eq:86}
  \hX
  \=
  \PK \setminus \bigcup_{\whx \in L} B(\whx),
\end{equation}
the right-most term in~\eqref{eq:85} is equal to
\begin{equation}
  \label{eq:87}
  (d + 1)(P(Q_{\dag}, \hX) - Z(Q_{\dag}, \hX)) - d (P(Q_{\dag}', \hX) - Z(Q_{\dag}', \hX) + \# L).
\end{equation}
But ${\hX \subseteq X_{\dag}}$, so ${P(Q_{\dag}, \hX) \le d}$, ${Q_{\dag}(\hX) \subseteq D_{\dag}}$, ${Z(Q_{\dag}, \hX) = 0}$, and thus~\eqref{eq:85} yields
\begin{multline}
  \label{eq:88}
  dZ(Q_{\dag}', \hX)
  \ge
  \# L - P(Q_{\dag}, \hX) + d (P(Q_{\dag}', \hX) - P(Q_{\dag}, \hX) + \# L)
  \\ \ge
  \# L + d (P(Q_{\dag}', \hX) - P(Q_{\dag}, \hX) + \# L - 1).
\end{multline}
On the other hand, from ${Q_{\dag}(\infty) = \infty}$ and ${\deg_{Q_{\dag}}(\infty) = 1}$ it follows that the poles of~$Q_{\dag}'$ coincide with the finite poles of~$Q_{\dag}$.
Together with the hypothesis ${|\wmax(Q_{\dag})| > 0}$, this implies that every pole~$z_0$ of~$Q_{\dag}'$ satisfies
\begin{equation}
  \label{eq:89}
  \ord_{Q_{\dag}'}(z_0)
  =
  \deg_{Q_{\dag}}(z_0) + 1.
\end{equation}
Thus,
\begin{equation}
  \label{eq:90}
  P(Q_{\dag}', \hX)
  =
  P(Q_{\dag}, \hX) + \#(\hX \cap Q_{\dag}^{-1}(\infty)) - 2
\end{equation}
and~\eqref{eq:88} yields
\begin{equation}
  \label{eq:91}
  dZ(Q_{\dag}', \hX)
  \ge
  \# L + d(\#(\hX \cap Q_{\dag}^{-1}(\infty)) + \# L - 3).
\end{equation}
The next step is to prove
\begin{equation}
  \label{eq:92}
  \#(\hX \cap Q_{\dag}^{-1}(\infty)) + \# L
  \ge
  3.
\end{equation}
Since ${\#(\hX \cap Q_{\dag}^{-1}(\infty)) \ge 1}$, the inequality~\eqref{eq:92} holds if ${\# L \ge 2}$.
Suppose ${\#L = 1}$.
Then, $\hX$ is a disk.
Since~$0$ is outside~$X_{\dag}$ and thus~$\hX$, and the unique point~$\whx$ in~$L$ satisfies ${\diam(\whx) < 1}$, the set~${\{ z \in K \: |z| = 1 \}}$ is contained in~$\hX$.
This implies that~${\{ z \in K \: |z| = 1 \}}$ contains no zero of~$Q_{\dag}$ and, by~\eqref{eq:82}, that it contains a pole of~$Q_{\dag}$.
Thus, ${\#(\hX \cap Q_{\dag}^{-1}(\infty)) \ge 2}$ and~\eqref{eq:92} holds in all of the cases.
Combining~\eqref{eq:91} and~\eqref{eq:92} yields ${d Z(Q_{\dag}', \hX) \ge 1}$ and hence ${Z(Q_{\dag}', \hX) \ge 1}$.
Since ${\hX \subseteq X_{\dag}}$, it follows that~$Q_{\dag}$ has a critical point in~$X_{\dag}$ that is not a pole.

\section{Locating critical points in immediate basins}
\label{s:critically-attracted}

This section states and proves refined versions of Theorems~\ref{t:critically-attracted} and~\ref{t:infinitely-attracted} and \cref{c:critical-bound} from~\cref{s:introdcution}.
Throughout this section, fix a rational map~$R$ of degree~$d$ at least two with coefficients in~$K$ and an attracting cycle~$\cO$ of~$R$.
The multiplier~$\lambda$ and \textsc{Lyapunov} exponent~$\chi$ of~$\cO$, are given by
\begin{equation}
  \label{eq:93}
  \lambda
  \=
  \prod_{z \in \cO} \| R' \|(z)
  \text{ and }
  \chi
  \=
  \frac{1}{\# \cO} \log |\lambda|.
\end{equation}
The \emph{basin of attraction of~$\cO$}, is the set of points~$x$ of~$\PKber$ such that~$(R^{n \# \cO}(x))_{n = 1}^{+\infty}$ converges to a point in~$\cO$.
For each~$z$ in~$\cO$, denote by~$\aA(z)$ the connected component of the basin of attraction of~$\cO$ that contains~$z$.
The \emph{immediate basin of~$\cO$}, is~$\bigcup_{z \in \cO} \aA(z)$.

Recall from~\cref{s:preliminaries} that the maximal inseparability degree~$\wmax(R)$ of~$R$, is defined by
\begin{equation}
  \label{eq:94}
  \wmax(R)
  =
  \max \{ \wdeg{R}(x) \: x \in \HK \}.
\end{equation}
Noting that~$(\wmax(R^n))_{n = 1}^{+\infty}$ is sub-multiplicative, define
\begin{equation}
  \label{eq:95}
  \delta(R)
  \=
  \lim_{n \to +\infty} (\wmax(R^n))^{\frac{1}{n}}.
\end{equation}
Since ${\lambda(d) \le |\wmax(R)| \le |\delta(R)|}$, \cref{t:critically-attracted} in~\cref{s:introdcution} is a direct consequence of the following theorem.

\begin{custtheo}{A'}
  \label{t:critically-attracted'}
  If ${\chi < \log |\delta(R)|}$, then the immediate basin of~$\cO$ contains a critical point~$c$ of~$R$ in~$\PK$, together with a disk~$D$ that includes~$R(c)$, intersects~$\cO$, and satisfies ${R^{\#\cO}(D) \subseteq D}$.
\end{custtheo}

The following corollary is a direct consequence of the previous theorem.

\begin{coro}
  \label{c:critical-bound'}
  The number of cycles of~$R$ whose \textsc{Lyapunov} exponent is strictly less than~$\log |\delta(d)|$ is finite, less than or equal to the number of critical points of~$R$ in~$\PK$.
\end{coro}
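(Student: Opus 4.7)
The plan is to deduce \cref{c:critical-bound'} from \cref{t:critically-attracted'} by the classical \textsc{Fatou}--\textsc{Julia} counting argument: to each attracting cycle~$\cO$ with ${\chi(\cO) < \log|\delta(R)|}$, assign a critical point in the immediate basin of~$\cO$, and then argue that distinct such cycles yield distinct critical points.

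First, I would verify that the immediate basins of distinct attracting cycles are pairwise disjoint. If~$\cO_1$ and~$\cO_2$ are distinct attracting cycles of~$R$ and~$x$ is a point of~$\PKber$ whose forward orbit under~$R^{\# \cO_1 \cdot \# \cO_2}$ converges to a point of~$\cO_1$, then the same sequence cannot simultaneously converge to a point of~$\cO_2$; hence the basins of attraction of~$\cO_1$ and~$\cO_2$ are disjoint, and so are their immediate basins ${\bigcup_{z \in \cO_i} \aA(z)}$.

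Second, for each cycle~$\cO$ of~$R$ with ${\chi(\cO) < \log|\delta(R)|}$, \cref{t:critically-attracted'} produces a critical point~$c(\cO)$ of~$R$ in~$\PK$ lying in the immediate basin of~$\cO$. By the preceding paragraph, the assignment ${\cO \mapsto c(\cO)}$ is an injection from the set of such cycles into the set of critical points of~$R$ in~$\PK$, whose cardinality is bounded by~${2d - 2}$. This establishes both the finiteness and the claimed upper bound.

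I do not anticipate any genuine obstacle. The substantive content is in \cref{t:critically-attracted'}; the present corollary is an essentially immediate consequence of it. The only point requiring minor care is that the critical point produced by \cref{t:critically-attracted'} lies in~$\PK$ rather than merely in~$\PKber$, and this is already built into the conclusion of that theorem.
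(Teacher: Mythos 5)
Your argument is correct and is precisely the ``direct consequence'' the paper intends: any cycle with $\chi < \log|\delta(R)| \le 0$ is necessarily attracting, \cref{t:critically-attracted'} places a critical point of $R$ in $\PK$ in its immediate basin, and the pairwise disjointness of immediate basins of distinct attracting cycles (by uniqueness of limits along the common subsequence of periods) makes the assignment injective. The only minor point worth making explicit is that $\log|\delta(R)| \le 0$, so the hypothesis $\chi < \log|\delta(R)|$ does imply the cycle is attracting, which is the form required to invoke \cref{t:critically-attracted'}.
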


The following theorem relies on some notation and terminology.
Denote by~$\cO_{\crit}$ the set of those~$z$ in~$\cO$ such that~$R$ has a critical point in ${\aA(z) \cap \PK}$.
Note that, for each~$z$ in~$\cO$, the degree~$\deg_R(\aA(z))$ is defined because~$\aA(z)$ is the connected component of~$R^{-1}(\aA(R(z)))$ containing~$z$.
The immediate basin of~$\cO$ is \emph{of \textsc{Cantor} type}, if, for each~$z$ in~$\cO$, the boundary of~$\aA(z)$ is a \textsc{Cantor} set.
The immediate basin of~$\cO$ is not of \textsc{Cantor} type, then, for each~$z$ in~$\cO$, the set~$\aA(z)$ is a disk \cite[\emph{Th{\'e}or{\`e}me}~2]{Riv03c}.

The next notation depends on whether the immediate basin of~$\cO$ is of \textsc{Cantor} type or not.
If it is, then put, for each~$z$ in~$\cO$,
\begin{align}
  \label{eq:96}
  \ell(z)
  & \=
    \begin{cases}
      |\wmax(R)|^{\deg_R(\aA(z))}
      & \text{if } z \in \cO_{\crit};
      \\
      |\wmax(R)|
      & \text{if } z \not\in \cO_{\crit}.
    \end{cases}
\end{align}
Suppose that the immediate basin of~$\cO$ is not of \textsc{Cantor} type.
If for each~$z'$ in ${\cO \setminus \cO_{\crit}}$ the equality ${|\deg_R(\aA(z'))| = 1}$ holds, then put ${\theta \= 1}$.
Otherwise, ${p}$ is a prime number and then put ${\theta \= |p|^{\frac{1}{p - 1}}}$.
In all of the cases, put, for each~$z$ in~$\cO$,
\begin{align}
  \label{eq:97}
  \ell(z)
  & \=
  |\deg_R(\aA(z))|
\intertext{if~$z$ is outside~$\cO_{\crit}$ and, otherwise,}
  \label{eq:98}
  \hell(z)
  & \=
  \min \{ |\deg_R(D)|^{\deg_R(D)} \: D \text{ disk of~$\PK$ satisfying } z \in D \subseteq \aA(z) \}
  \intertext{and}
  \label{eq:99}
  \ell(z)
  & \=
  \min \{ \theta, \hell(z) \}.
\end{align}

In all of the cases, put
\begin{equation}
  \label{eq:100}
  \ell(\cO)
  \=
  \left( \prod_{z \in \cO} \ell(z) \right)^{1 / \# \cO}.
\end{equation}
The inequality
\begin{align}
  \label{eq:101}
  \ell(\cO)
  & \ge
    |\wmax(R)|^{1 + (d - 1) \# \cO_{\crit} / \# \cO}
    \intertext{holds because, for every~$z$ in~$\cO$,}
    \label{eq:102}
    \ell(z)
  & \ge
    \begin{cases}
      |\wmax(R)|^d
      & \text{if } z \in \cO_{\crit};
      \\
      |\wmax(R)|
      & \text{if } z \not\in \cO_{\crit}.
    \end{cases}
\end{align}
Since ${\lambda(d) \le |\wmax(R)|}$ and ${\# \cO_{\crit} \le 2d - 2}$, \cref{t:infinitely-attracted} in~\cref{s:introdcution} is a direct consequence of the following theorem.

\begin{custtheo}{B'}
  \label{t:infinitely-attracted'}
  If ${-\infty < \chi < \log \ell(\cO)}$, then the immediate basin of~$\cO$ contains a critical point~$c$ of~$R$ in~$\PK$ whose forward orbit is infinite, together with a disk~$D$ containing~$R(c)$, intersecting~$\cO$, and such that~$R^{\#\cO}$ maps~$D$ univalently into itself.
\end{custtheo}

When~$\cO$ consists of a fixed point, \cref{t:infinitely-attracted'} implies \cite[Theorem~4.1]{BenIngJonLev14}.
\cref{c:infinitely-attracted'} is a direct consequence of \cref{t:infinitely-attracted'} and the inequality ${\# \cO_{\crit} \le d - 1}$ for polynomials.
The following corollary is a direct consequence of \cref{t:infinitely-attracted'} and the inequality ${\# \cO_{\crit} \le 2d - 2}$.

\begin{coro}
  \label{c:infinitely-attracted''}
  Let~$R$ be a rational map of degree~$d$ at least two with coefficients in~$K$, and~$\cO$ an attracting cycle of~$R$ of \textsc{Lyapunov} exponent~$\chi$ whose immediate basin is not of \textsc{Cantor} type.
  Then,
  \begin{equation}
    \label{eq:103}
    -\infty
    <
    \chi
    <
    \begin{cases}
      \log \hlambda(d)
      & \text{if } \# \cO \le 2d - 2;
      \\
      \log \lambda(d) + \frac{2d - 2}{\# \cO} \log \frac{\hlambda(d)}{\lambda(d)}
      & \text{if } \# \cO > 2d - 2
    \end{cases}
  \end{equation}
  implies that~$\cO$ attracts a critical point of~$R$ in~$\PK$ whose forward orbit is infinite.
\end{coro}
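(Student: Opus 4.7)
The plan is to deduce the corollary directly from \cref{t:infinitely-attracted'} by showing that the hypothesis~\eqref{eq:103} on $\chi$ implies $\chi < \log \ell(\cO)$, using the uniform bound $\# \cO_{\crit} \le 2d - 2$ available for any rational map of degree $d$ at least two.

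Since the immediate basin of $\cO$ is not of \textsc{Cantor} type, $\ell(z)$ is given by~\eqref{eq:97}--\eqref{eq:99}. First I would bound $\ell(z)$ from below termwise. For $z \notin \cO_{\crit}$, the degree $\deg_R(\aA(z))$ belongs to $\{1, \ldots, d\}$, so $\ell(z) = |\deg_R(\aA(z))| \ge \lambda(d)$. For $z \in \cO_{\crit}$, any disk~$D$ appearing in~\eqref{eq:98} satisfies $\deg_R(D) \le d$, yielding $\hell(z) \ge \hlambda(d)$; it remains to check that $\theta \ge \hlambda(d)$. The case $\theta = 1$ is trivial since $\hlambda(d) \le 1$. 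Otherwise some $z' \in \cO \setminus \cO_{\crit}$ has $p \mid \deg_R(\aA(z'))$, which forces the residue characteristic $p$ to satisfy $p \le d$, and then $\hlambda(d) \le |p|^p \le |p|^{1/(p - 1)} = \theta$, where the last inequality reduces to $p(p - 1) \ge 1$, valid for every prime $p \ge 2$. Hence $\ell(z) \ge \hlambda(d)$ whenever $z \in \cO_{\crit}$.

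Next I would observe that $\hlambda(d) \le \lambda(d)$, since $|m|^m \le |m|$ for every $m \in \{1, \ldots, d\}$ (as each integer $m$ has norm at most one). Taking geometric means over $\cO$ and using $\log(\hlambda(d)/\lambda(d)) \le 0$, I obtain
\begin{equation*}
  \log \ell(\cO)
  \ge
  \log \lambda(d) + \frac{\# \cO_{\crit}}{\# \cO} \log \frac{\hlambda(d)}{\lambda(d)}.
\end{equation*}
Because the added term is nonpositive, replacing $\# \cO_{\crit}$ by its largest admissible value produces a worst-case lower bound: if $\# \cO \le 2d - 2$ one uses $\# \cO_{\crit} \le \# \cO$, yielding $\log \ell(\cO) \ge \log \hlambda(d)$; if $\# \cO > 2d - 2$ one uses $\# \cO_{\crit} \le 2d - 2$, yielding $\log \ell(\cO) \ge \log \lambda(d) + \frac{2d - 2}{\# \cO} \log(\hlambda(d)/\lambda(d))$. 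In either case the hypothesis~\eqref{eq:103} delivers $\chi < \log \ell(\cO)$, so \cref{t:infinitely-attracted'} yields the critical point of $R$ in $\PK$ attracted to $\cO$ whose forward orbit is infinite.

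The argument is essentially unpacking definitions starting from \cref{t:infinitely-attracted'}, so I do not anticipate any real obstacle. The only point requiring care is the elementary verification that $\theta \ge \hlambda(d)$, which hinges on the mildly nontrivial inequality $p(p - 1) \ge 1$ controlling the two exponents of $|p|$.
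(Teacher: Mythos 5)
Your proposal is correct and is exactly the argument the paper intends — the paper states only that the corollary is ``a direct consequence of \cref{t:infinitely-attracted'} and the inequality $\# \cO_{\crit} \le 2d - 2$'', and you have supplied the precise termwise bounds $\ell(z) \ge \lambda(d)$ off $\cO_{\crit}$ and $\ell(z) \ge \hlambda(d)$ on $\cO_{\crit}$ (including the minor but necessary check that $\theta \ge \hlambda(d)$ when $\theta = |p|^{1/(p-1)}$, using $p \le d$ and $p(p-1) \ge 1$) and then optimized over $\# \cO_{\crit}$ using $\hlambda(d) \le \lambda(d)$. This unpacking is faithful to what the paper's brief remark requires.
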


The proofs of Theorems~\ref{t:critically-attracted'} and~\ref{t:infinitely-attracted'} occupy \S\S\ref{ss:proof-critically-attracted'} and~\ref{ss:proof-infinitely-attracted'}, respectively.

\subsection{Proof of \cref{t:critically-attracted'}}
\label{ss:proof-critically-attracted'}
For each~$z$ in~$\cO$, \cref{l:disk-basin} provides the main properties of the largest disk~$D(z)$ containing~$z$ and contained in the immediate basin of~$\cO$.
Building on~\cref{l:critically-holomorphic} and \cref{t:critically-mapped'}, \cref{l:critical-criterion} gives a criterion ensuring that~$D(R(z))$ contains a critical value.
The proof of \cref{t:critically-attracted'} verifies that this criterion is satisfied for at least one point of~$\cO$.

The rational map~$R$ has a fixed point in ${\PK \setminus \cO}$.
Changing coordinates if necessary, throughout the rest of this section suppose ${\cO \subset K}$ and ${R(\infty) = \infty}$.

\begin{lemm}
  \label{l:disk-basin}
  For each~$z$ in~$\cO$, let~$D(z)$ be the union of all the balls of~$\PK$ containing~$z$ and contained in~$\aA(z)$.
  Then, $D(z)$ is a rational open disk of~$\PK$.
  Furthermore, ${R(D(z)) \subseteq D(R(z))}$, and, if this inclusion is strict, then~$R$ has a pole~$\pi$ satisfying
  \begin{equation}
    \label{eq:104}
    |\pi - z| = \diam(D(z)).
  \end{equation}
\end{lemm}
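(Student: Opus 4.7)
The plan is to establish each assertion in turn. First, since $\infty$ is a fixed point of $R$ outside $\cO$ (by the coordinate choice) and $\aA(z)$ excludes $\infty$, every ball of $\PK$ containing $z$ inside $\aA(z)$ lies in $K$, having the form $\{w \in K \: |w - z| \le r\}$ or $\{w \in K \: |w - z| < r\}$. Such balls are totally ordered under inclusion in the ultrametric, so $D(z)$ is itself a disk of some diameter $r^* \in (0, +\infty)$.

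Second, I would argue $D(z)$ cannot contain a pole of $R$: otherwise $\infty \in R(D(z)) \subseteq R(\aA(z)) \subseteq \aA(R(z))$, contradicting $\infty \not\in \aA(R(z))$. Hence $R$ is holomorphic on $D(z)$; the non-\textsc{Archimedean} maximum principle shows that $R(D(z))$ is a $K$-disk containing $R(z)$ and lying in $\aA(R(z))$, and the maximality defining $D(R(z))$ gives $R(D(z)) \subseteq D(R(z))$.

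Third, for openness and rationality of $D(z)$, I would invoke the \textsc{Berkovich} structure of immediate basins: when $\aA(z)$ is itself a disk (the non-\textsc{Cantor} case), it is a \textsc{Berkovich} open disk with type-II boundary, so $D(z) = \aA(z) \cap K$ is an open $K$-ball of rational diameter; in the \textsc{Cantor}-type case, $D(z)$ is the largest $K$-ball around $z$ avoiding the \textsc{Cantor} boundary, likewise open with rational diameter. Alternatively, one can expand $R(z+h) = R(z) + \sum_{n \ge 1} a_n h^n$ on the disk of convergence and use $s(r) \= \max_{n \ge 1} |a_n| r^n$ (continuous and strictly increasing) to argue that the closed ball $\{h \in K \: |h| \le r^*\}$ cannot lie in $\aA(z)$: either $r^*$ is the distance from $z$ to a pole (so the closed ball would contain the pole, placing $\infty$ in the image), or its image $\{u \in K \: |u - R(z)| \le s(r^*)\}$ is a closed ball that cannot sit inside the open disk $D(R(z))$. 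In either case $D(z) = \{h \in K \: |h| < r^*\}$ is open with $r^* \in |K^\times|$.

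Finally, suppose $R(D(z)) \subsetneq D(R(z))$ strictly, so $s(r^*) < \diam D(R(z))$. If $R$ had no pole at distance $r^*$ from $z$, then (since $D(z)$ already contains no pole) the power series at $z$ would converge on a strictly larger open disk $\{h \in K \: |h| < r^* + \varepsilon\}$ for some $\varepsilon > 0$, with image $\{u \in K \: |u - R(z)| < s(r^* + \varepsilon)\}$; by continuity of $s$, for $\varepsilon$ small enough this image lies in $D(R(z)) \subseteq \aA(R(z))$, and pulling back via basin invariance and connectivity places $\{h \in K \: |h| < r^* + \varepsilon\}$ inside $\aA(z)$, contradicting the maximality of $r^*$. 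Therefore $R$ has a pole $\pi$ with $|\pi - z| = r^* = \diam D(z)$. The main obstacle is the openness-rationality step in the third paragraph, which requires careful use of the \textsc{Berkovich} structure of the basin; the remaining arguments reduce to direct power-series manipulations.
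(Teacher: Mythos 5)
Your first, second, and fourth paragraphs track the paper's proof closely and are correct: the total ordering of balls gives a disk; absence of poles plus the nonarchimedean maximum principle gives $R(D(z)) \subseteq D(R(z))$; and the no-pole-at-distance-$r^*$ argument in your last paragraph is exactly the paper's argument for the third assertion.

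The gap, which you yourself flag, is the openness-rationality claim in your third paragraph, and neither of your two suggested arguments closes it. The ``\textsc{Berkovich} structure'' route leaves two things unproved: that the single boundary point of~$\aA(z)$ in the non-\textsc{Cantor} case is of type~II (the cited theorem in \cite{Riv03c} only says $\aA(z)$ is a disk, not what type the boundary point is), and, more seriously, the entire \textsc{Cantor} case, where the assertion that ``the largest $K$-ball around $z$ avoiding the \textsc{Cantor} boundary is open with rational diameter'' is stated with no justification at all (the nearest boundary point need not be fixed by $R^{\#\cO}$, and its type is not obvious). The alternative power-series argument is circular: it invokes openness of $D(R(z))$ to prove openness of $D(z)$, so one never gets a base case around the cycle. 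Even granting that, the inference that the closed ball $\{u \in K \: |u - R(z)| \le s(r^*)\}$ ``cannot sit inside the open disk $D(R(z))$'' fails, since a closed ball of strictly smaller diameter certainly can sit inside an open disk.

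The missing idea is the paper's case analysis along a segment in $\PKber$. Working with $R_0 \= R^{\#\cO}$, normalizing $z = 0$, and letting $r_0$ be the largest norm of a preimage of $\infty$ under $R_0$ (with $x_0$ the associated point of $\PKber$), the paper observes that $R_0$ maps the segment $]0, x_0]$ into $]0, \infty]$. Either $R_0(x_0)$ lies in $]0, x_0[$, in which case $D(z)$ is the open ball of radius $r_0$ (so $r_0 \in |K^\times|$ because it equals the norm of a preimage of $\infty$, and openness follows because any strictly larger ball meets $R_0^{-1}(\infty)$); or $R_0$ fixes some point $x_*$ of $]0, x_0]$, and $D(z)$ is the open ball of radius $\diam(x_*)$, where $x_*$ is of type~II (this uses the divisibility of $|K^\times|$). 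In both cases \textsc{Schwarz}' lemma from \cite{Riv03c} shows the ball is in $\aA(z)$. This unified argument avoids splitting into the \textsc{Cantor}/non-\textsc{Cantor} cases entirely and delivers both openness and rationality at once.
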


\begin{proof}
  To prove the first assertion, put ${R_0 \= R^{\# \cO}}$ and let~$z_0$ be in~$\cO$.
  After an affine coordinate change if necessary, suppose ${z_0 = 0}$ and put
  \begin{equation}
    \label{eq:105}
    r_0
    \=
    \max \{ |z| \: z \in R_0^{-1}(\infty) \}.
  \end{equation}
  Furthermore, put ${x_0 \= \infty}$ if ${r_0 = +\infty}$ and denote by~$x_0$ the point of~$\PKber$ associated with the ball ${\{z \in K \: |z| \le r_0\}}$ otherwise.
  Then, for every~$x$ in~$]0, x_0]$, the point~$R_0(x)$ belongs to~$]0, \infty]$ and
  \begin{equation}
    \label{eq:106}
    R_0(\{ z \in \PK \: |z| < |x| \})
    =
    \{ z \in \PK \: |z| < |R_0(x)| \}.
  \end{equation}
  Suppose that~$R_0(x_0)$ belongs to~$]0, x_0[$ and put ${D_0 \= \{z \in K \: |z| < r_0\}}$.
  Then, $r_0$ is finite and~$D_0$ is contained in~$\aA(z_0)$ by~\eqref{eq:106} and \textsc{Schwarz}' Lemma \cite[\S1.3.1]{Riv03c}.
  Furthermore, every ball of~$\PK$ containing~$D_0$ strictly, intersects~$R_0^{-1}(\infty)$ and it is thus not contained in~$\aA(z_0)$.
  It follows that ${D(z_0) = D_0}$ and hence that~$D(z_0)$ is a rational open disk of~$\PK$.
  It remains to consider the case where~$R_0(x_0)$ is outside~$]0, x_0[$.
  Since~$0$ is an attracting fixed point of~$R_0$, if~$r_0$ is finite, then~$R_0$ fixes a point of~$]0, x_0]$.
  On the other hand, if ${r_0 = +\infty}$, then ${R_0^{-1}(\infty) = \{ \infty \}}$, the fixed point~$\infty$ of~$R_0$ is attracting, and hence~$R_0$ fixes a point of~$]0, \infty[$.
  In both cases, $R_0$ fixes a point of~$]0, x_0]$ in~$\HK$.
  Denote by~$x_*$ the fixed point of~$R_0$ in~$]0, x_0]$ that is closest to~$0$ and put ${D_* \= \{z \in K \: |z| < |x_*|\}}$.
  Then, $x_*$ is of type~II and~$D_*$ is contained in~$\aA(z_0)$ by~\eqref{eq:106} and \textsc{Schwarz}' Lemma \cite[\S1.3.1]{Riv03c}.
  Furthermore, every ball of~$\PKber$ containing~$D_*$ strictly, contains~$x_*$ and hence it is not contained in~$\aA(z_0)$.
  It follows that ${D_* = D(z_0)}$ and that~$D(z_0)$ is an rational open disk of~$\PK$.
  The proof of the first assertion is thus complete.

  To prove the second assertion, suppose ${R(D(z_0)) \neq D(R(z_0))}$ and put ${r_0 \= \diam(D(z_0))}$.
  Since~$D(z_0)$ is a disk without poles of~$R$, the set~$R(D(z_0))$ is a ball of~$\PKber$ containing~$R(z_0)$ and contained in~$\aA(R(z_0))$.
  It follows that~$R(D(z_0))$ is contained in~$D(R(z_0))$.
  Together with the hypothesis ${R(D(z_0)) \neq D(R(z_0))}$, this implies that~$R(D(z_0))$ is strictly contained in~$D(R(z_0))$.
  If~$R$ had no pole~$\pi$ satisfying ${|\pi - z_0| = r_0}$, then there would be~$\varrho_0$ in~$]r_0, +\infty[$ such that
  \begin{equation}
    \label{eq:107}
    R(\{z \in K \: |z - z_0| < \varrho_0\})
    \subseteq
    D(R(z_0)).
  \end{equation}
  This would imply
  \begin{equation}
    \label{eq:108}
    \{z \in K \: |z - z_0| < \varrho_0\})
    \subseteq
    \aA(z_0),
  \end{equation}
  and would contradict the maximality of~$D(z_0)$.
\end{proof}

For each~$z$ in~$\cO$, let~$D(z)$ be the rational open disk of~$\PK$ given by \cref{l:disk-basin} and~$X(z)$ the component of~$R^{-1}(D(R(z)))$ containing~$z$.
Then,
\begin{equation}
  \label{eq:109}
  D(z)
  \subseteq
  X(z)
  \subseteq
  \aA(z)
  \text{ and }
  R^{\#\cO}(D(z))
  \subseteq
  D(z).
\end{equation}

\begin{lemm}
  \label{l:critical-criterion}
  Suppose~$z$ in~$\cO$ satisfies
  \begin{equation}
    \label{eq:110}
    |R'(z)|
    <
    |\wmax(R)| \frac{\diam(D(R(z)))}{\diam(D(z))}.
  \end{equation}
  Then, $R$ has a critical point in~$X(z)$.
\end{lemm}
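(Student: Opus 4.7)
The plan is to reduce to the results of~\cref{s:critically-mapped} after an affine change of coordinates placing both~$z$ and~$R(z)$ at the origin. Set ${Q(w) \= R(w + z) - R(z)}$, so that ${Q(0) = 0}$, ${Q(\infty) = \infty}$, ${Q'(0) = R'(z)}$, and ${\wmax(Q) = \wmax(R)}$, via the identities recalled in~\cref{s:preliminaries}. If ${R'(z) = 0}$, then~$z$ itself is a critical point of~$R$ lying in~${D(z) \subseteq X(z)}$ and the conclusion is immediate; and the hypothesis is vacuous unless ${|\wmax(R)| > 0}$, so I would assume both ${R'(z) \neq 0}$ and ${|\wmax(R)| > 0}$. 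Writing ${r \= \diam(D(z))}$ and ${r' \= \diam(D(R(z)))}$, the hypothesis becomes ${|Q'(0)| < |\wmax(Q)|\,r'/r}$. By~\cref{l:disk-basin}, $D(z)$ contains no pole of~$R$, so every pole of~$Q$ has norm at least~$r$; moreover, either ${R(D(z)) = D(R(z))}$, or~$Q$ has a pole of norm exactly~$r$. I would split on these two alternatives.

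In the first case, $Q$ maps the open disk ${\{w \in K \: |w| < r\}}$ onto ${\{w \in K \: |w| < r'\}}$ with some degree ${d_0 \= \deg_R(D(z))}$. Factoring~$d_0$ into a separable part coprime to~$p$ and an inseparable $p$-power at most~$\wmax(R)$ yields ${|d_0| \ge |\wmax(R)|}$, so the hypothesis strengthens to ${|Q'(0)| < |d_0|\,r'/r}$, and~\cref{l:critically-holomorphic} produces a critical point of~$Q$ inside ${\{|w| < r\}}$---equivalently, a critical point of~$R$ inside ${D(z) \subseteq X(z)}$.

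In the second case, $Q$ has a pole of norm~$r$, so~$r$ is the smallest norm of a pole of~$Q$ and~$Q$ is not a polynomial. The hypothesis rewrites as ${|\wmax(Q)|^{-1}|Q'(0)|\,r < r'}$. Applying~\cref{t:critically-mapped'} to~$Q$ then furnishes a critical point of~$Q$ in the connected component~$Y$ of ${Q^{-1}(\{w \in K \: |w| \le |\wmax(Q)|^{-1}|Q'(0)|\,r\})}$ containing~$0$. The strict inequality ensures that the target closed ball is contained in ${\{|w| < r'\}}$, so~$Y$ sits inside the component of ${Q^{-1}(\{|w| < r'\})}$ containing~$0$, which under the translation is precisely~$X(z)$.

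I expect the main obstacle to be Case~1's observation that ${|\deg_R(D(z))| \ge |\wmax(R)|}$, which is not quoted verbatim from~\cref{s:preliminaries} but follows from the general description of the local degree on a disk mapped onto a disk as the product of a separable factor coprime to~$p$ and an inseparable factor that is a power of~$p$ at most~$\wmax(R)$; the remainder of the argument is then a direct transcription of \cref{l:critically-holomorphic} and \cref{t:critically-mapped'} after the coordinate change.
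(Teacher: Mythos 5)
Your proposal follows the paper's argument exactly: the same dichotomy on whether $R(D(z)) = D(R(z))$ or the inclusion is strict, with Lemma~\ref{l:critically-holomorphic} covering the first alternative and Theorem~\ref{t:critically-mapped'} together with Lemma~\ref{l:disk-basin} covering the second, after the translation that places $z$ and $R(z)$ at the origin. One caveat about the Case~1 justification you single out: the separable degree of a reduction is \emph{not} automatically coprime to~$p$ (already $\zeta^p - \zeta$ is separable of degree~$p$), so your proposed factorization of $d_0 = \deg_R(D(z))$ into a $p$-prime part times a $p$-power bounded by $\wmax(R)$ does not hold as stated; the inequality $|\deg_R(D(z))| \ge |\wmax(R)|$ is nonetheless correct and is what the paper itself silently invokes both here and at~\eqref{eq:44}, but it is most directly obtained from $\wf_R \le -\log|\wdeg{R}|$ in~\eqref{eq:15} applied along the boundary of $D(z)$ under the assumption that $D(z)$ is critical-point free, rather than from a decomposition of $d_0$.
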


\begin{proof}
  If ${R(D(z)) = D(R(z))}$, then ${X(z) = D(z)}$ and \cref{l:critically-holomorphic} yields the desired assertion.
  If~$R(D(z))$ is strictly contained in~$D(R(z))$, then~$R$ has a pole satisfying ${|\pi - z| = \diam(D(z))}$ by \cref{l:disk-basin}, and \cref{t:critically-mapped'} with ${Q(z) = R(z + z) - R(z)}$ yields the desired assertion.
\end{proof}

\begin{proof}[Proof of \cref{t:critically-attracted'}]
  Replacing~$R$ by an iterate if necessary, suppose ${\chi < \log |\wmax(R)|}$.
  For each~$z$ in~$\cO$, put ${r(z) \= \diam(D(z))}$.
  Then,
  \begin{equation}
    \label{eq:111}
    \prod_{z \in \cO} \left( |R'(z)| r(z)/r(R(z)) \right)
    =
    |\lambda|
    =
    \exp(\#\cO \times \chi)
    <
    |\wmax(R)|^{\#\cO},
  \end{equation}
  and hence that there is~$z_0$ in~$\cO$ satisfying
  \begin{equation}
    \label{eq:112}
    |R'(z_0)|
    <
    |\wmax(R)| r(R(z_0))/r(z_0).
  \end{equation}
  Together with \cref{l:critical-criterion}, this implies that~$R$ has a critical point in~$X(z_0)$.
  The desired assertion follows with ${D = D(z_0)}$.
\end{proof}

\subsection{Proof of \cref{t:infinitely-attracted'}}
\label{ss:proof-infinitely-attracted'}
As in~\cref{ss:proof-critically-attracted'}, suppose ${\cO \subset K}$ and ${R(\infty) = \infty}$.
For each~$z$ in~$\cO$, let~$D(z)$ the rational open disk of~$\PK$ given by \cref{l:disk-basin} and~$X(z)$ the component of~$R^{-1}(D(R(z)))$ containing~$z$.
If the immediate basin of~$\cO$ is not of \textsc{Cantor} type, then each of the inclusions in~\eqref{eq:109} is an equality.
In particular, ${R(D(z)) = D(R(z))}$.

The first step in the proof of \cref{t:infinitely-attracted'} is to prove that~$\cO_{\crit}$ is nonempty using \cref{l:critically-holomorphic} and \cref{l:critical-criterion}, a specific version of \cref{t:critically-mapped'}.
The main step is to prove that, for a carefully chosen~$z_0$ in~$\cO$, the following holds for every~$n$ in~$\Z_{> 0}$: There is~$j_n$ in~$\{0, \ldots, n - 1\}$ and a critical value~$v_n$ of~$R$ in~$K$ that is close to, but distinct from, $R^{j_n + 1}(z_0)$ and such that~$R^{n - j_n - 1}$ is univalent on~$\{ z \in K \: |z - R^{j_n + 1}(z_0)| \le |v_n - R^{j_n + 1}(z_0)|\}$.
The base case ${n = 1}$ follows from \cref{t:strictly-critically-mapped'} and \cref{l:critically-holomorphic'}.
The inductive step follows from \cref{t:strictly-critically-mapped''} if~$z_n$ belongs to~$\cO_{\crit}$, and from the general lemma below otherwise.
\cref{t:infinitely-attracted'} then follows from the main step and the fact that the number of critical points of~$R$ in~$\PK$ is finite.

The proof of \cref{t:infinitely-attracted'} relies on the following general lemma.

\begin{lemm}
  \label{l:injectivity-radius}
  Let~$Q$ be a nonconstant rational map with coefficients in~$K$, let~$r$ in~$\R_{> 0}$ be such that~$Q$ has no pole or critical point in~$\{ z \in K \: |z| < r \}$, and put ${d \= \deg_Q(\{ z \in K \: |z| < r \})}$.
  Then, ${Q}$ is univalent on ${\{ z \in K \: |z| < r\}}$ if ${|d| = 1}$, and on ${\{ z \in K \: |z| < |p|^{\frac{1}{p - 1}} r\}}$ if ${0 < |d| < 1}$.
\end{lemm}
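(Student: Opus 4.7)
The plan is to reduce to the case where $Q$ is given by a power series around $0$, analyze the Newton polygons of $Q$ and $Q'$ to extract bounds on the coefficients, and then conclude univalence from the ultrametric triangle inequality. By translation, it suffices to fix an arbitrary $z_1$ with $|z_1| < \rho$ (where $\rho = r$ in the first case and $\rho = |p|^{1/(p-1)} r$ in the second), shift so $z_1 = 0$, and show that $Q(z) \ne Q(0)$ for every $z \ne 0$ with $|z| < \rho$; the strong ultrametric identity $\{|z - z_1| < r\} = \{|z| < r\}$ ensures that the hypotheses transfer verbatim. Expand $Q(z) = a_0 + \sum_{n \ge 1} a_n z^n$, which converges on $\{|z| < r\}$ because there are no poles there, and note $a_1 \ne 0$ because $0$ is not a critical point. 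The Newton polygon of $Q'$ translates the absence of critical points on the open disk into the estimate $|n a_n| r^{n - 1} \le |a_1|$ for every $n \ge 2$, and the identification of $d = \deg_Q(\{|z| < r\})$ as the largest index at which $|a_n| r^n$ attains its maximum yields the complementary bound $|a_n| r^{n - 1} \le |a_d| r^{d - 1}$ for every $n \ge 1$.

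In the case $|d| = 1$, comparing the two bounds at $n = d$ forces the equality $|a_d| r^{d-1} = |a_1|$, whence the maximum property propagates to $|a_n| r^{n-1} \le |a_1|$ for every $n \ge 2$. For $0 < |z| < r$ this gives
\[
  |a_n z^n| \le |a_1 z| \cdot (|z|/r)^{n - 1} < |a_1 z| \quad \text{for every } n \ge 2,
\]
and the ultrametric triangle inequality concludes $|Q(z) - Q(0)| = |a_1 z| > 0$. In the case $0 < |d| < 1$, one invokes the sharper coefficient bound $|a_n| r^{n-1} \le |a_1|/|n|$ directly, reducing the injectivity question to verifying $(|z|/r)^{n - 1} < |n|$ for every $n \ge 2$. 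Writing $n = p^k m$ with $p \nmid m$, the identity $p^k - 1 = (p - 1)(1 + p + \cdots + p^{k - 1})$ together with $n \ge p^k$ yields $n - 1 \ge k(p - 1)$, i.e., $v_p(n)/(n - 1) \le 1/(p - 1)$, hence $|n| \ge |p|^{(n - 1)/(p - 1)}$. Combined with the hypothesis $|z| < |p|^{1/(p - 1)} r$, this gives $(|z|/r)^{n - 1} < |p|^{(n - 1)/(p - 1)} \le |n|$, and the ultrametric triangle inequality again delivers $Q(z) \ne Q(0)$.

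I expect the main obstacle to lie in the careful Newton polygon bookkeeping, particularly in identifying $d$ with the largest index at which $|a_n| r^n$ is maximized (which links the combinatorial data to the topological degree $\deg_Q(\{|z| < r\})$) and in tracking the distinction between the open disk and its closure when converting ``no zeros of $Q'$'' into coefficient inequalities. The arithmetic inequality $(n - 1)/(p - 1) \ge v_p(n)$, with equality precisely at $n = p$, is what produces the sharp constant $|p|^{1/(p - 1)}$ in the second case; it is elementary once isolated but must be applied uniformly over all $n \ge 2$, not merely at the extremal case $n = p$.
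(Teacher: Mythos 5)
Your proposal is correct and follows essentially the same route as the paper's own proof: translate to an arbitrary base point in the smaller disk, expand in a power series, use the Newton polygons of $Q$ and $Q'$ to obtain the coefficient bounds $|na_n|r^{n-1} \le |a_1|$ and $|a_n|r^{n-1} \le |a_d|r^{d-1}$, and close via the strict ultrametric inequality, with the arithmetic fact $v_p(n) \le (n-1)/(p-1)$ producing the constant $|p|^{1/(p-1)}$ in the second case. The paper leaves that last arithmetic inequality implicit inside one display; you have simply spelled it out, but the argument is the same.
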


\begin{proof}
  Put ${\theta \= 1}$ if ${|d| = 1}$ and ${\theta \=|p|^{\frac{1}{p - 1}}}$ if ${0 < |d| < 1}$, and let~$w$ and~$w'$ in~$\{ z \in K \: |z| < \theta r \}$ be distinct.
  Since~${Q(z + w) - Q(w)}$ has no pole in~$\{ z \in K \: |z| < r \}$, it has a power series expansion~$\sum_{n = 1}^{+\infty} a_n z^n$ on this set.
  Since this map has no critical point in ${\{ z \in K \: |z| < r \}}$, an analysis of the \textsc{Newton} polygon of its derivative reveals that, for every~$n$ in~$\Z_{> 0}$, the inequality ${|na_n| r^{n - 1} \le |a_1|}$ holds.
  On the other hand, from the definition of~$d$, for every~$n$ in~$\Z_{> 0}$ the inequality ${|a_n| r^n \le |a_d| r^d}$ holds.
  Thus, ${|d| = 1}$ implies ${|a_d| r^{d - 1} = |a_1|}$ and, for every~$n$ in~$\Z_{> 0}$,
  \begin{equation}
    \label{eq:113}
    |a_n| r^{n - 1}
    \le
    |a_d| r^{d - 1}
    =
    |a_1|.
  \end{equation}
  On the other hand, ${0 < |d| < 1}$ implies ${0 < |p| < 1}$ and that, for every~$n$ in~$\Z_{> 0}$,
  \begin{equation}
    \label{eq:114}
    |a_n| (\theta r)^{n - 1}
    =
    |p|^{\frac{n - 1}{p - 1}} |a_n| r^{n - 1}
    \le
    |na_n| r^{n - 1}
    \le
    |a_1|.
  \end{equation}
  In all of the cases, for every~$n$ in~$\Z_{> 0}$, the inequality ${|a_n| r^{n - 1} \le |a_1|}$ holds and thus
  \begin{equation}
    \label{eq:115}
    |Q(w') - Q(w) - a_1(w' - w)|
    \le
    \max \{ |a_n| \times |w' - w|^n \: n \ge 2 \}
    <
    |a_1| \times |w' - w|.
  \end{equation}
  Hence
  \begin{equation}
    \label{eq:116}
    |Q(w') - Q(w)|
    =
    |a_1| \times |w' - w|
    >
    0.
    \qedhere
  \end{equation}
\end{proof}

\begin{proof}[Proof of \cref{t:infinitely-attracted'}]
  Suppose ${-\infty < \chi < \log \ell(\cO)}$.
  This implies ${|\wmax(R)| > 0}$.
  On the other hand, ${|\wmax(R)| \le \theta}$.

  Put ${\kappa \= \log \ell(\cO) - \chi}$ and, for each~$z$ in~$\cO$,
  \begin{equation}
    \label{eq:117}
    r(z)
    \=
    \diam(D(z))
    \text{ and }
    u(z)
    \=
    \log \frac{|R'(z)| r(z)}{r(R(z))} - \log \ell(z) + \kappa.
  \end{equation}
  Note that
  \begin{equation}
    \label{eq:118}
    \kappa
    >
    0
    \text{ and }
    \sum_{z \in \cO} u(z)
    =
    0.
  \end{equation}
  Fix~$\whz_0$ in~$\cO$, let~$n_0$ in~$\{1, \ldots, \# \cO \}$ be maximizing ${\sum_{i = 0}^{n_0 - 1} u(R^i(\whz_0))}$, and, for every~$i$ in~$\Z_{> 0}$, put ${z_i \= R^{n_0 + i}(\whz_0)}$.
  Then, for every~$n$ in~$\Z_{\ge 0}$,
  \begin{equation}
    \label{eq:119}
    \sum_{i = 0}^n u(z_i)
    =
    \sum_{j = 0}^{n_0 + n} u(R^j(\whz_0)) - \sum_{j = 0}^{n_0 - 1} u(R^j(\whz_0))
    \le
    0.
  \end{equation}
  For ${n = 0}$, this implies
  \begin{equation}
    \label{eq:120}
    \frac{|R'(z_0)| r(z_0)}{r(z_1)}
    \le
    \ell(z_0) \exp(- \kappa)
    <
    \ell(z_0).
  \end{equation}
  If the immediate basin of~$\cO$ is of \textsc{Cantor} type, then ${\ell(z_0) \le |\wmax(R)|}$ and \cref{l:critical-criterion} implies that~$R$ has a critical point in~$X(z_0)$ and hence in~$\aA(z_0)$.
  If the immediate basin of~$\cO$ is not of \textsc{Cantor} type, then ${R(D(z_0)) = D(z_1)}$, ${\ell(z_0) \le |\deg_R(D(z_0))|}$, and \cref{l:critically-holomorphic} implies that~$R$ has a critical point in~$D(z_0)$ and hence in~$\aA(z_0)$.
  In all of the cases, $z_0$ belongs to~$\cO_{\crit}$.

  The next step is to prove that for every~$n$ in~$\Z_{> 0}$, there are~$j_n$ in ${\{0, \ldots, n - 1\}}$ and a critical point~$c_n$ of~$R$ in~$X(z_{j_n})$ such that~$R^{n - j_n - 1}$ is univalent on
  \begin{equation}
    \label{eq:121}
    \{z \in K \: |z - z_{j_n + 1}| \le |R(c_n) - z_{j_n + 1}|\},
  \end{equation}
  and
  \begin{equation}
    \label{eq:122}
    0
    <
    \frac{|R^{n - j_n}(c_n) - z_n|}{r(z_n)}
    \le
    \theta \exp \left( -n \kappa + \sum_{i = 0}^{n - 1} u(z_i) \right).
  \end{equation}
  To prove this for ${n = 1}$, put
  \begin{equation}
    \label{eq:123}
    \hvarrho_0
    \=
    \theta \exp(-\kappa + u(z_0)).
  \end{equation}
  Then, ${\hvarrho_0 < 1}$.
  If~$R(D(z_0))$ is strictly contained in~$D(z_1)$, then~$R$ has a pole at norm distance~$r(z_0)$ from~$z_0$ by \cref{l:disk-basin}, the immediate basin of~$\cO$ is of \textsc{Cantor} type, and
  \begin{equation}
    \label{eq:124}
    \ell(z_0)
    =
    |\wmax(R)|^{\deg_R(\aA(z_0))}
    \le
    |\wmax(R)|^{\deg_R(X(z_0))}.
  \end{equation}
  Combined with ${|\wmax(R)| \le \theta}$, this implies
  \begin{equation}
    \label{eq:125}
    |\wmax(R)|^{-(\deg_R(X(z_0)) - 1)} \frac{|R'(z_0)| r(z_0)}{r(z_1)}
    \le
    |\wmax(R)| \exp(-\kappa + u(z_0))
    \le
    \hvarrho_0
    <
    1.
  \end{equation}
  Hence, \cref{t:strictly-critically-mapped'} with ${Q(z) = R(z + z_0) - R(z_0)}$ and ${\whr = \hvarrho_0 r(z_1)}$ implies that~$R$ has a critical point~$c_0$ in~$X(z_0)$ satisfying
  \begin{equation}
    \label{eq:126}
    0
    <
    |R(c_0) - z_1|
    \le
    \hvarrho_0 r(z_1).
  \end{equation}
  This implies~\eqref{eq:122} with ${n = 1}$ and ${j_0 = 0}$ when~$R(D(z_0))$ is strictly contained in~$D(z_1)$.
  To complete the proof of~\eqref{eq:122} with ${n = 1}$, suppose ${R(D(z_0)) = D(z_1)}$.
  Note that ${X(z_0) = D(z_0)}$.
  By \textsc{Schwarz}' Lemma \cite[\S1.3.1]{Riv03c}, there is a real number~$\varrho_0$ satisfying ${\hvarrho_0 \le \varrho_0 < 1}$ and such that, putting ${D_0 \= \{z \in K \: |z - z_0| < \varrho_0 r(z_0)\}}$,
  \begin{equation}
    \label{eq:127}
    R(D_0)
    =
    \{z \in K \: |z - z_1| < \hvarrho_0 r(z_1)\}.
  \end{equation}
  If the immediate basin of~$\cO$ is of \textsc{Cantor} type, then, using ${|\wmax(R)| \le \theta}$,
  \begin{equation}
    \label{eq:128}
    \ell(z_0)
    =
    |\wmax(R)|^{\deg_R(\aA(z_0))}
    \le
    \theta |\deg_R(D_0)|^{\deg_R(\aA(z_0)) - 1}
    \le
    \theta |\deg_R(D_0)|^{\deg_R(D_0) - 1}.
  \end{equation}
  If the immediate basin of~$\cO$ is not of \textsc{Cantor} type and ${|\deg_R(D_0)| < 1}$, then
  \begin{equation}
    \label{eq:129}
    |\deg_R(D_0)|
    \le
    \theta
    \text{ and }
    \ell(z_0)
    \le
    |\deg_R(D_0)|^{\deg_R(D_0)}
    \le
    \theta |\deg_R(D_0)|^{\deg_R(D_0) - 1}.
  \end{equation}
  If the immediate basin of~$\cO$ is not of \textsc{Cantor} type and ${|\deg_R(D_0)| = 1}$, then
  \begin{equation}
    \label{eq:130}
    \ell(z_0)
    \le
    \theta
    =
    \theta |\deg_R(D_0)|^{\deg_R(D_0) - 1}.
  \end{equation}
  Thus, in all of the cases, ${\ell(z_0) \le \theta |\deg_R(D_0)|^{\deg_R(D_0) - 1}}$ and
  \begin{multline}
    \label{eq:131}
    |R'(z_0)|
    =
    \ell(z_0) \exp(-\kappa + u(z_0)) r(z_1)/r(z_0)
    \le
    |\deg_R(D_0)|^{\deg_R(D_0) - 1} \hvarrho_0 r(z_1)/r(z_0)
    \\ <
    |\deg_R(D_0)|^{\deg_R(D_0) - 1} \hvarrho_0 r(z_1)/(\varrho_0 r(z_0)).
  \end{multline}
  Combined with~\eqref{eq:15} and \cref{l:critically-holomorphic'}, this implies that~$R$ has a critical point~$c_0$ satisfying
  \begin{equation}
    \label{eq:132}
    |c_0 - z_0|
    <
    \varrho_0 r(z_0)
    \text{ and }
    0
    <
    |R(c_0) - z_1|
    <
    \hvarrho_0 r(z_1).
  \end{equation}
  This implies~\eqref{eq:122} with ${n = 1}$ and ${j_0 = 0}$ when ${R(D(z_0)) = D(z_1)}$, and completes the proof of~\eqref{eq:122} when ${n = 1}$.
  Let~$n$ in~$\Z_{> 0}$ be such that there are~$j_n$ in ${\{0, \ldots, n - 1\}}$ and a critical point~$c_n$ of~$R$ in~$X(z_{j_n})$ such that~$R^{n - j_n - 1}$ is univalent on~\eqref{eq:121} and~\eqref{eq:122} holds.
  Put
  \begin{equation}
    \label{eq:133}
    v_n
    \=
    R^{n - j_n}(c_n)
    \text{ and }
    B_n
    \=
    \{z \in K \: |z - z_n| \le |v_n - z_n|\}.
  \end{equation}
  By~\eqref{eq:119} and~\eqref{eq:122},
  \begin{equation}
    \label{eq:134}
    0
    <
    |v_n - z_n|
    \le
    \theta \exp(-n\kappa) r(z_n)
    <
    \theta r(z_n).
  \end{equation}
  If~$z_n$ is outside~$\cO_{\crit}$, then~$R$ is univalent on ${\{ z \in K \: |z - z_n| < \theta r(z_n)\}}$ by \cref{l:injectivity-radius}.
  Since the latter set contains~$B_n$,
  \begin{equation}
    \label{eq:135}
    |R^{n + 1 - j_n}(c_n) - z_{n + 1}|
    =
    |R(v_n) - z_{n + 1}|
    =
    |R'(z_n)| \times |v_n - z_n|
  \end{equation}
  and~$R^{n - j_n}$ is univalent on~\eqref{eq:121}.
  Together with~\eqref{eq:122}, this implies
  \begin{multline}
    \label{eq:136}
    0
    <
    \frac{|R^{n + 1 - j_n}(c_n) - z_{n + 1}|}{r(z_{n + 1})}
    =
    \frac{|R'(z_n)|r(z_n)}{r(z_{n + 1})} \times \frac{|v_n - z_n|}{r(z_n)}
    \le
    \exp(u(z_n) - \kappa) \frac{|v_n - z_n|}{r(z_n)}
    \\ \le
    \theta \exp \left( -(n + 1) \kappa + \sum_{i = 0}^n u(z_i) \right).
  \end{multline}
  This proves that~\eqref{eq:122} holds with~$n$ replaced by ${n + 1}$, ${j_{n + 1} = j_n}$, and ${c_{n + 1} = c_n}$ when~$z_n$ is outside~$\cO_{\crit}$.
  To complete the proof that~\eqref{eq:122} holds with~$n$ replaced by ${n + 1}$, suppose~$z_n$ belongs to~$\cO_{\crit}$.
  If~$R$ is univalent on~$B_n$, then, as in the previous case, $R^{n - j_n}$ is univalent on~\eqref{eq:121}, \eqref{eq:135} and~\eqref{eq:136} hold, and hence~\eqref{eq:122} holds with~$n$ replaced by ${n + 1}$, ${j_{n + 1} = j_n}$, and ${c_{n + 1} = c_n}$.
  Suppose~$R$ is not univalent on~$B_n$, put
  \begin{equation}
    \label{eq:137}
    Q(z)
    \=
    R(z + z_n) - z_{n + 1}
    \text{ and }
    \whr
    \=
    \ell(z_n)^{-1} |R'(z_n)| \times |v_n - z_n|,
  \end{equation}
  and denote by~$X$ the component of ${Q^{-1}(\{z \in K \: |z| < \whr\})}$ containing~$0$.
  Then, ${Q}$ has a zero different from~$0$ whose norm is bounded from above by ${|v_n - z_n|}$.
  On the other hand, by~\eqref{eq:119} and~\eqref{eq:122},
  \begin{multline}
    \label{eq:138}
    \whr
    =
    \exp(u(z_n) - \kappa) \frac{|v_n - z_n|}{r(z_n)} r(z_{n + 1})
    \le
    \theta \exp \left( -(n + 1) \kappa + \sum_{i = 0}^n u(z_i) \right) r(z_{n + 1})
    \\ <
    r(z_{n + 1}),
  \end{multline}
  and thus ${X \subset X(z_n)}$ and ${\deg_Q(X) \le \deg_R(\aA(z_n))}$.
  If the immediate basin of~$\cO$ is of \textsc{Cantor} type, then
  \begin{equation}
    \label{eq:139}
    \ell(z_n)
    =
    |\wmax(R)|^{\deg_R(A(z_n))}
    \le
    |\wmax(R)|^{\deg_Q(X)}.
  \end{equation}
  Combined with \cref{t:strictly-critically-mapped''} and~\eqref{eq:138}, this yields that~$R$ has a critical point~$c_{n + 1}$ in~$X(z_n)$ satisfying
  \begin{equation}
    \label{eq:140}
    0
    <
    \frac{|R(c_{n + 1}) - z_{n + 1}|}{r(z_{n + 1})}
    \le
    \frac{\whr}{r(z_{n + 1})}
    \le
    \theta \exp \left( -(n + 1) \kappa + \sum_{i = 0}^n u(z_i) \right).
  \end{equation}
  If the immediate basin of~$\cO$ is not of \textsc{Cantor} type, then ${X \subset X(z_n) = D(z_n)}$ and thus~$X$ is a ball.
  So,
  \begin{equation}
    \label{eq:141}
    \ell(z_n)
    \le
    |\deg_Q(X)|^{\deg_Q(X)}
  \end{equation}
  and, by~\eqref{eq:15}, \eqref{eq:22} in \cref{l:critically-holomorphic'} is satisfied with~$r$ replaced by~$\diam(X)$ and~$r'$ by~$\whr$.
  Combined with~\eqref{eq:138}, this yields that~$R$ has a critical point~$c_{n + 1}$ in~$X(z_n)$ satisfying~\eqref{eq:140}.
  This completes the proof that~\eqref{eq:122} holds with~$n$ replaced by ${n + 1}$ and ${j_{n + 1} = n}$.
  By induction, for every~$n$ in~$\Z_{> 0}$ there are~$j_n$ in ${\{0, \ldots, n - 1\}}$ and a critical point~$c_n$ of~$R$ in~$X(z_{j_n})$ such that~$R^{n - j_n - 1}$ is univalent on~\eqref{eq:121} and~\eqref{eq:122} holds.

  Since~$R$ has a finite number of critical points and
  \begin{equation}
    \label{eq:142}
    \theta \exp \left( -n \kappa + \sum_{i = 0}^{n - 1} u(z_i) \right)
    \to
    0
    \text{ as }
    n
    \to
    +\infty,
  \end{equation}
  there is~$n_0$ in~$\Z_{> 0}$ such that ${n_0 \ge j_{n_0} + \# \cO + 1}$.
  Put
  \begin{equation}
    \label{eq:143}
    z_{\ddag}
    \=
    z_{j_{n_0} + 1},
    c_{\ddag}
    \=
    c_{n_0},
    \text{ and }
    D_{\ddag}
    \=
    \{z \in K \: |z - z_{\ddag}| \le |R(c_{\ddag}) - z_{\ddag}|\}.
  \end{equation}
  Then, ${R(c_{\ddag}) \neq z_{\ddag}}$, $R^{\# \cO}$ is univalent on~$D_{\ddag}$, and thus
  \begin{equation}
    \label{eq:144}
    \diam(R^{\# \cO}(D_{\ddag}))
    =
    |\lambda| \diam(D_{\ddag})
    <
    \diam(D_{\ddag}).
  \end{equation}
  Combined with ${R^{\# \cO}(z_{\ddag}) = z_{\ddag}}$, this implies ${R^{\# \cO}(D_{\ddag}) \subset D_{\ddag}}$.
  It follows that the forward orbit of~$R(c_{\ddag})$ under~$R^{\# \cO}$ is infinite, and thus the orbit of~$c_{\ddag}$ under~$R$ is infinite.
  l This proves the desired assertion with ${c = c_{\ddag}}$ and ${D = D_{\ddag}}$.
\end{proof}

\section{Examples}
\label{s:examples}
Throughout this section, fix an integer~$d$ satisfying ${d \ge 2}$.

\subsection{Attracting cycles not attracting a critical point}
\label{ss:A-sharpness}
This section provides examples showing that \cref{t:critically-attracted} is sharp if ${\lambda(d) > 0}$.
When ${\lambda(d) = 0}$, this section provides examples of attracting cycles of arbitrary period and small multiplier attracting no critical point.

Suppose ${\lambda(d) > 0}$.
If ${\lambda(d) = 1}$, then \cref{t:critically-attracted} is sharp because its hypothesis holds for every attracting cycle.
Suppose ${\lambda(d) < 1}$, so the characteristic of~$K$ is zero and~$p$ is a prime number greater than or equal to~$d$.
Put
\begin{equation}
  \label{eq:145}
  P_0(z)
  \=
  \begin{cases}
    z^d
    & \text{if } p \mid d \text{ and } p^2 \not\mid d;
    \\
    z^p - p^d z^d
    & \text{if } p \not\mid d \text{ or } p^2 \mid d.
  \end{cases}
\end{equation}
The polynomial~$P_0$ has a nontrivial reduction, equal to the polynomial with coefficients in~$\tK$ given by~$\zeta^d$ or~$\zeta^p$.
In all of the cases, for every~$n$ in~$\Z_{> 0}$, the polynomial~$P_0$ has a cycle~$\cO$ of minimal period~$n$ contained in~$\{z \in K \: |z| = 1\}$.
Since ${|P_0| = |p|}$ on this set, the multiplier~$\lambda$ of~$\cO$ satisfies ${|\lambda| = |p|^n}$.
The cycle~$\cO$ does not attract the critical point~$0$, because this point is fixed.
When ${p \mid d}$ and ${p^2 \not\mid d}$, it is the only finite critical point of~$P_0$.
Suppose ${p \not\mid d}$ or ${p^2 \mid d}$, and let~$c$ be a critical point of~$P_0$ distinct from~$0$.
Then,
\begin{equation}
  \label{eq:146}
  dp^{d - 1} c^{d - p}
  =
  1,
  P_0(c)
  =
  c^p(1 - p/d),
  \text{ and }
  |P_0(c)|
  \ge
  |c|^p
  =
  |d p^{d - 1}|^{-\frac{p}{d - p}}.
\end{equation}
It follows that the orbit of~$c$ under~$P_0$ diverges to~$\infty$ and it is thus not attracted to~$\cO$.

Suppose ${\lambda(d) = 0}$, so the characteristic of~$K$ is positive, equal to~$p$, and~$p$ is a prime number greater than or equal to~$d$.
Given~$\eta$ in ${\MK \setminus \{ 0 \}}$, put
\begin{equation}
  \label{eq:147}
  P_1(z)
  \=
  \begin{cases}
    \eta z + z^d
    & \text{if } p \mid d;
    \\
    \eta z + z^p + \eta^{pd} z^d
    & \text{if } d \not\mid p.
  \end{cases}
\end{equation}
The polynomial~$P_1$ has a nontrivial reduction, equal to the polynomial~$\zeta^p$ with coefficients in~$\tK$.
It follows that, for every~$n$ in~$\Z_{> 0}$, the polynomial~$P_1$ has a cycle~$\cO$ of minimal period~$n$ contained in~$\OK$.
Since ${|P_1| = |\eta|}$ on this set, the multiplier~$\lambda$ of~$\cO$ satisfies ${|\lambda| = |\eta|^n}$.
In particular, ${\cO}$ is attracting.
When~$d$ is a multiple of~$p$, the polynomial~$P_1$ has no finite critical point, so~$\cO$ cannot attract one.
If~$d$ is not a multiple of~$p$, then an analysis of the \textsc{Newton} polygon reveals that the norm of every finite critical point of~$P_1$ is equal to~$|\eta|^{- \frac{pd - 1}{d - 1}}$.
This implies that every finite critical point escapes to infinity under the iteration of~$P_1$.
Thus, in all of the cases~$\cO$ attracts no critical point.

\subsection{Rational maps with large critical values}
\label{ss:C-sharpness}
Suppose ${\lambda(d) > 0}$, so the characteristic of~$K$ is zero or strictly larger than~$d$.

The following example shows that \cref{t:critically-mapped} is sharp if~$d$ is not a power of~$p$.
For such a~$d$, there is~$q$ in~$\{1, \ldots, d - 1\}$ satisfying ${|q| = \lambda(d)}$.
Let~$\varepsilon$ in~$K$ be such that ${0 < |\varepsilon| < |q/d|}$ and put
\begin{equation}
  \label{eq:148}
  Q_0(z)
  \=
  \frac{(1 + \varepsilon -\varepsilon z)^d}{z^q}.
\end{equation}
Then, ${Q_0(\infty) = \infty}$, $0$ is the only finite pole of~$Q_0$,
\begin{equation}
  \label{eq:149}
  Q_0(1)
  =
  1,
  Q_0'(1)
  =
  -(q + \varepsilon d),
  \text{ and }
  |Q_0'(1)|
  =
  |q|
  =
  \lambda(d).
\end{equation}
On the other hand, ${1 + \varepsilon^{-1}}$ is a critical point of~$Q_0$ and its image by~$Q_0$ is~$0$.
Moreover, the only finite critical point~$c_0$ of~$Q_0$ different from~$1 + \varepsilon^{-1}$ is given by ${c_0 = -\frac{q(1 + \varepsilon)}{\varepsilon (d - q)}}$.
Since ${|d - q| = |d|}$,
\begin{equation}
  \label{eq:150}
  |c_0| > 1,
  |1 + \varepsilon - \varepsilon c_0|
  =
  \left| \frac{d(1 + \varepsilon)}{d - q} \right|
  =
  1,
  \text{ and }
  |Q_0(c_0)|
  >
  1.
\end{equation}
Thus, equality holds in~\eqref{eq:8} with ${z_0 = 1}$ and ${v = 0}$.

The following examples show that \cref{c:critically-mapped} is sharp.
Suppose ${d \le p}$ and put
\begin{equation}
  \label{eq:151}
  Q_1(z)
  \=
  z - z^d.
\end{equation}
Then, ${\lambda(d) = |d|}$, ${|Q_1'(0)| = 1}$, and the norm of every zero of~$Q_1$ different from~$0$ is equal to~$1$.
On the other hand, every critical point~$c$ of~$Q_1$ satisfies
\begin{equation}
  \label{eq:152}
  c^{d - 1}
  =
  d^{-1}
  \text{ and }
  |Q_1(c)|
  =
  |d|^{-\frac{d}{d - 1}}
  =
  \gamma(d) |d|^{-1}.
\end{equation}
Thus, equality holds in~\eqref{eq:10} with ${z_0 = 0}$ and ${v = Q_1(c)}$.

Suppose ${d > p}$.
Let~$q$ be the largest integer in~$\{p, \ldots, d\}$ satisfying ${|q| = \lambda(d)}$, and choose~$\alpha$ in~$K$ satisfying ${|\alpha| = 1}$.
In the case where ${|q| = |p|}$, let~$m$ be the integer such that ${q = m p}$ and choose~$\alpha$ such that, in addition, every root~$\zeta_0$ of ${1 + (1 -\wtm) \zeta^{p - 1}}$ in~$\tK$ satisfies ${\talpha \zeta_0^{q - p} \neq \wtm}$.
Put
\begin{equation}
  \label{eq:153}
  Q_2(z)
  \=
  \begin{cases}
    z + \frac{1}{p} z^p - \frac{\alpha}{d} z^d
    & \text{if } d = q;
    \\
    z + \frac{1}{p} z^p - \frac{\alpha}{q} z^q + pz^d
    & \text{if } d > q.
  \end{cases}
\end{equation}
Note that ${|Q_2'(0)| = 1}$.
Since ${|\alpha| = 1}$, the map~$Q_2$ has ${p - 1}$ zeros of norm~$|p|^{\frac{1}{p - 1}}$, and its remaining zeros different from~$0$ have norm strictly greater than~$|p|^{\frac{1}{p - 1}}$.
On the other hand, ${Q_2}$ has ${q - 1}$ critical points of norm~$1$, and, by the choice of~$\alpha$, each of these critical points~$c$ satisfies
\begin{equation}
  \label{eq:154}
  |Q_2(c)|
  =
  |q|^{-1}
  =
  |\lambda(d)|^{-1}.
\end{equation}
When ${d = q}$, these are all of the critical points of~$Q_2$, and thus equality holds in~\eqref{eq:10} with ${z_0 = 0}$.
Suppose ${d > q}$.
Then, ${|d| > |q|}$ and every critical point~$c'$ of~$Q_2$ of norm different from~$1$ satisfies
\begin{equation}
  \label{eq:155}
  |c'|
  =
  |d p|^{- \frac{1}{d - q}}
  >
  1
  \text{ and }
  |Q_2(c')|
  =
  |q|^{-1} \times |c'|^q
  >
  |q|^{-1}
  =
  |\lambda(d)|^{-1}.
\end{equation}
Together with~\eqref{eq:154}, this proves that equality holds in~\eqref{eq:10} with ${z_0 = 0}$.

\bibliographystyle{alpha}

\end{document}